%
%
%
%
%
%

\documentclass[10pt]{amsart}
\usepackage{latexsym,amsmath,amscd,amssymb,epsfig,graphicx,tabls,verbatim}
\usepackage{epstopdf}

\numberwithin{equation}{section}
\newtheorem{theorem}{Theorem}[section]
\newtheorem{proposition}[theorem]{Proposition}
\newtheorem{corollary}[theorem]{Corollary}
\newtheorem{lemma}[theorem]{Lemma}
\newtheorem{conjecture}[theorem]{Conjecture}

\newtheorem{problem}[theorem]{Problem}
\newtheorem{remark}[theorem]{Remark}
\newtheorem{defn}[theorem]{Definition}
\theoremstyle{definition}
\newtheorem{example}[theorem]{Example}

\newcommand{\Park}{{{\sf Park}}}
\newcommand{\Cat}{{\mathrm{Cat}}}
\newcommand{\Nar}{{\mathrm{Nar}}}
\newcommand{\Kirk}{{\mathrm{Kirk}}}

\newcommand{\codim}{{\mathrm{codim}}}
\newcommand{\rank}{{\mathrm{rank}}}

\newcommand{\Sym}{{\mathrm{Sym}}}

\newcommand{\Shi}{{{\sf Shi}}}
\newcommand{\Cox}{{{\sf Cox}}}

\newcommand{\Ind}{{\mathrm{Ind}}}
\newcommand{\Res}{{\mathrm{Res}}}
\newcommand{\Hilb}{{\mathrm{Hilb}}}
\newcommand{\Hom}{{\mathrm{Hom}}}

\newcommand{\triv}{{\mathbf{1}}}
\newcommand{\1}{{\mathbf 1}}

\newcommand{\init}{{\mathrm{in}}}

\newcommand{\symm}{{\mathfrak{S}}}
\newcommand{\mm}{{\mathfrak{m}}}
\newcommand{\gr}{{\mathfrak{gr}}}

\newcommand{\CC}{{\mathbb {C}}}
\newcommand{\ZZ}{{\mathbb {Z}}}

\newcommand{\RR}{{\mathbb {R}}}
\newcommand{\NN}{{\mathbb {N}}}

\newcommand{\xx}{{\mathbf{x}}}

\newcommand{\LLL}{{\mathcal{L}}}

\newcommand\qbin[3]{\left[\begin{matrix} #1 \\ #2 \end{matrix} \right]_{#3}}

\setcounter{MaxMatrixCols}{15}

\begin{document}


\title[Parking spaces]
{Parking spaces}

\author{Drew Armstrong}
\address{Dept. of Mathematics\\University of Miami\\
Coral Gables, FL 33146}
\email{d.armstrong@math.miami.edu}

\author{Victor Reiner}
\address{School of Mathematics\\
University of Minnesota\\
Minneapolis, MN 55455}
\email{reiner@math.umn.edu}

\author{Brendon Rhoades}
\address{Dept. of Mathematics\\
University of California - San Diego\\
La Jolla, CA 92093}
\email{bprhoades@math.ucsd.edu}

\thanks{First author partially supported by NSF grant DMS-1001825. Second author partially supported by NSF grant DMS-1001933.
Third author partially supported by NSF grant DMS-1068861}


\keywords{parking function, Coxeter group, reflection group, noncrossing,
nonnesting, Catalan, Kirkman, Narayana, cyclic sieving, absolute order,
rational Cherednik algebra}

\begin{abstract}
Let $W$ be a Weyl group with root lattice $Q$ and Coxeter number $h$. The elements of the finite torus $Q/(h+1)Q$ are 
called the $W$-{\sf parking functions}, and we call the 
permutation representation of $W$ on the set of $W$-parking functions 
the (standard) $W$-{\sf parking space}. Parking spaces have interesting connections to enumerative combinatorics, diagonal harmonics, and rational Cherednik algebras. In this paper we define two new $W$-parking spaces, called the {\sf noncrossing parking space} and the {\sf algebraic parking space}, with the following features:
\begin{itemize}
\item They are defined more generally for real reflection groups.
\item They carry not just $W$-actions, but $W\times C$-actions, where $C$ is the cyclic subgroup of $W$ generated by a Coxeter element.
\item In the crystallographic case, both are isomorphic to the standard $W$-parking space.
\end{itemize}
Our Main Conjecture is that the two new parking spaces are isomorphic to each other as permutation representations of $W\times C$. This conjecture ties together several threads in the Catalan combinatorics of finite reflection groups. 
Even the weakest form of the Main Conjecture has interesting combinatorial consequences, and
this weak form is proven in all types except $E_7$ and $E_8$.
We provide evidence for the stronger forms of the conjecture, 
including proofs in some cases, and suggest further directions for the theory.
\end{abstract}

\maketitle

\tableofcontents

\section{Introduction}
\label{intro-section}

Let $W$ be a finite Coxeter group (finite real reflection group). (We refer to the standard references \cite{BBCoxeter, Humphreys}.) The main goal of this paper is to define two new objects that deserve to be called ``parking spaces" for $W$. First we will describe the origin of the term ``parking space".

\subsection{Classical parking functions and spaces}
A classical {\sf parking function} is a map $f:[n]:=\{1,2,\ldots,n\}\to\NN$ for which the increasing rearrangement $(b_1\leq b_2\leq\cdots\leq b_n)$ of the sequence $(f(1),f(2),\ldots,f(n))$ satisfies $b_i\leq i$.\footnote{The name ``parking function" comes from a combinatorial interpretation due to Konheim and Weiss \cite{KonheimWeiss}. Suppose $n$ cars want to parking in $n$ linearly-ordered parking spaces and that car $i$ wants to park in spot $f(i)$. At step $i$, car $i$ tries to park in space $f(i)$. If the spot is full then $i$ parks in the first available spot $\geq f(i)$. If no such spot exists then $i$ leaves the parking lot. The function $f$ is called a parking function if every car is able to park.} Let $\Park_n$ denote the set of parking functions, which carries a permutation representation of the symmetric group $W=\symm_n$ via $(w.f)(i):=f(w^{-1}(i))$. We will call this the {\sf classical parking space}. We note that $\Park_n$ has size $(n+1)^{n-1}$ and its $W$-orbits are counted by the Catalan number $\frac{1}{n+1}\binom{2n}{n}$. Each orbit is represented by an {\sf increasing parking function}, but one could also parametrize the orbits by other Catalan objects, for example the {\em nonnesting partitions} defined in Definition~\ref{nonnesting-partitions-definition} 
below.

\begin{example}
\label{S_3-parking-example}
Below we exhibit the $(3+1)^{3-1}=4^2=16$ elements of  $\Park_3$, grouped into $5=\frac{1}{4}\binom{2 \cdot 3}{3}$ $\symm_3$-orbits by rows. The increasing orbit representatives are on the left.
$$
\begin{tabular}{|c|ccccc|}\hline
111&   &   &   &   & \\\hline
112&121&211&   &   & \\\hline
113&131&311&   &   & \\\hline
122&212&221&   &   & \\\hline
123&132&213&231&312&321\\\hline
\end{tabular}
$$
The permutation action of $\symm_3$ on $\Park_3$ decomposes over the orbits. Furthermore, note that the action of $\symm_3$ on the orbit $\{122,212,221\}$ is isomorphic to the action on cosets of a subgroup of type $\symm_2\times\symm_1$, which has Frobenius characteristic given by the complete homogeneous symmetric function $h_{2,1}=h_2h_1=(\sum_{1\leq i_1\leq i_2} x_{i_1}x_{i_2})(\sum_{1\leq i} x_i)$. Hence the action of $\symm_3$ on $\Park_3$ has Frobenius characteristic $$h_3+3\,h_{2,1}+h_{1,1,1}.$$
\end{example}

\subsection{Weyl group and real reflection group parking spaces}
Now we review how the action of $\symm_n$ on $\Park_n$ can be generalized to other finite {\sf Weyl groups} $W$ (crystallographic real reflection groups). Let $W$ act irreducibly on $V\cong\RR^n$. Since $W$ is crystallographic one can choose
\begin{equation}
\label{crystallographic-set-up}
\Delta\subseteq\Phi^+\subseteq\Phi\subseteq Q\subseteq V,
\end{equation}
where $\Delta,\Phi^+,\Phi,Q$ are the simple roots, positive roots, root system, and root lattice, respectively. Given $\Delta=\{\alpha_1,\ldots,\alpha_n\}$, let $s_i$ denote the reflection through the hyperplane $H_i=\alpha_i^\perp$. This endows $W$ with set of Coxeter generators $S=\{s_1,\ldots,s_n\}$. The product $s_1s_2\cdots s_n$ (taken in any order) is called a {\sf standard Coxeter element} 
of $W$. 
A {\sf Coxeter element} of $W$ is any $W$-conjugate of a standard Coxeter element.
The  Coxeter elements 
 form a single conjugacy class \cite[\S 3.16]{Humphreys} and the multiplicative order $h$ of any Coxeter element is called the {\sf Coxeter number} of $W$.

The role of $W$-{\sf parking functions} is played by the quotient $Q/(h+1)Q$,
having cardinality $(h+1)^n$, of the two nested rank $n$ lattices $(h+1)Q \subset Q$.
The $W$-action on $V$ induces a permutation $W$-action on $Q/(h+1)Q$ which we call the {\sf standard $W$-parking space}; see Haiman \cite[\S 2.4 and \S 7.3]{Haiman}. Recall that there is a standard partial order on positive roots $\Phi^+$ defined by setting $\alpha\leq\beta$ if and only if $\beta-\alpha\in\NN\Phi^+$. Then the $W$-orbits on $Q/(h+1)Q$ can be parametrized by antichains in the root poset $(\Phi^+,\leq)$ (also called {\sf $W$-nonnesting partitions}); see  Cellini and Papi \cite[\S 4]{CelliniPapi}, Sommers \cite[\S 5]{Sommers-B-stable}, and Shi \cite{Shi-plus-sign-types}. Finally, we note that the number of $W$-nonnesting partitions equals the {\sf $W$-Catalan number},
$$
\Cat(W)=\prod_{i=1}^n \frac{h+d_i}{d_i},
$$
where $\{d_1,d_2,\ldots,d_n\}$ is the multiset of {\sf degrees} for $W$. By definition, these are the degrees of any set of homogeneous algebraically-independent generators $f_1,\ldots,f_n$ for the invariant subalgebra $\RR[V]^W=\RR[f_1,\ldots,f_n]$ when $W$ acts on the algebra of polynomial functions $\RR[V]=\Sym(V^*)\cong\RR[x_1,\ldots,x_n]$ (see \cite{ST}).

\begin{example}
\label{type-A-example-1}
In type $A_{n-1}$ one has $W=\symm_n$ generated by $S=\{s_1,\ldots,s_{n-1}\}$, where the adjacent transposition $s_i=(i,i+1)$ corresponds to reflection through the hyperplane $H_{i,i+1}=\{x\in\RR^n: \langle x,e_i-e_{i+1}\rangle =0\}\subseteq\RR^n$. Thus $\symm_n$ acts irreducibly on the subspace $V=\1^\perp\subseteq\RR^n$ perpendicular to $\1=(1,1,\ldots,1)$. A Coxeter element $c=s_1s_2\cdots s_{n-1}$ is an $n$-cycle, with order $h=n$.

Here the root lattice $Q$ is the 
$\ZZ$-sublattice $\{ x \in \ZZ^n: \sum_i x_i=0\}$ inside $\ZZ^n$.  Hence
$
Q/(n+1)Q 
\quad \cong \quad
\left\{ 
x \in \ZZ_{n+1}^n: \sum_{i=1}^n x_i \equiv 0\bmod{n+1} 
\right\},
$
using the abbreviation $\ZZ_{n+1}:=\ZZ/(n+1)\ZZ$.
Haiman \cite[Prop 2.6.1]{Haiman} noted that 
the parking functions $\Park_n \subseteq\ZZ^n$ 
descend to give coset representatives for $\ZZ_{n+1}^n/\ZZ_{n+1}\1$.
We claim that this implies the $\symm_n$-actions 
on the sets $\Park_n$ and on $Q/(n+1)Q$ are isomorphic:
the finite abelian groups $\ZZ_{n+1}^n/\ZZ_{n+1}\1$ and 
$\left\{ 
x \in \ZZ_{n+1}^n: \sum_{i=1}^n x_i \equiv 0\bmod{n+1} 
\right\}$
are naturally {\it Pontrjagin dual}, so
that they carry {\it contragredient} $\symm_n$-representations, and permutation
representations are always self-contragredient.
\end{example}

In this paper we will propose two new {\sf $W$-parking spaces}. The first space is defined combinatorially, in terms of the {\sf $W$-noncrossing partitions}, and the second space is defined algebraically, as a quotient of the polynomial ring. These new spaces have the following features:
\begin{itemize}
\item For crystallographic $W$, both are isomorphic as $W$-representations to the standard $W$-parking space $Q/(h+1)Q$. However,
\item both are defined more generally for {\em non-crystallographic} finite reflection groups, and
\item both carry an additional $W\times C$-action, where $C=\langle c\rangle$ is the cyclic subgroup of $W$ generated by a Coxeter element $c$.
\end{itemize}
Our Main Conjecture states that the two new parking spaces are {\em isomorphic to each other} as $W\times C$-permutation representations, which has several consequences for the Catalan combinatorics of finite reflection groups. In the next section we will define the new parking spaces, state the Main Conjecture, and explore some of its consequences. After that we will give evidence for the conjecture, prove some special cases, and suggest problems for the future.

\section{Definitions and Main Conjecture}
\label{definitions-section}

First we will review the concepts of $W$-noncrossing and $W$-nonnesting partitions. For a full treatment, see \cite{Armstrong}.

Let $W$ be a finite Coxeter group (finite real reflection group) acting irreducibly on $V\cong\RR^n$ and orthogonally with respect to the inner product $\langle\cdot,\cdot\rangle$. Since $W$ may not be crystallographic there is no root lattice $Q$. However, one can still discuss the (noncrystallographic) {\sf root system} $\Phi=\{\pm \alpha\}$ of unit normals to the reflecting hyperplanes $H_\alpha$ for the set of all reflections $T=\{t_\alpha\}\subseteq W$. Let 
$\Cox(\Phi)=\{H_\alpha\}_{\alpha \in \Phi}$ 
denote the arrangement of reflecting hyperplanes and consider its lattice $\LLL$ of intersection subspaces (called {\em flats} $X$), ordered by reverse-inclusion.

\begin{example}
Again, consider the irreducible action of $W=\symm_n$ on the codimension-one subspace $V=\1^\perp\subseteq\RR^n$. The reflections of $\symm_n$ are precisely the transpositions $T=\{t_{i,j}=(i,j)\}$, where $t_{i,j}$ switches the $i$ and $j$ coordinates in $\RR^n$ and hence reflects in the hyperplane $H_{i,j}=\{x\in V :\langle x,e_i-e_j\rangle =0\}$. A typical intersection flat $X$ is defined by setting several blocks of coordinates equal, and corresponds to a set partition $\pi=\{B_1,\ldots,B_\ell\}$ of $[n]$ for which one has $H_{i,j}\subseteq X$ if and only if $i$ and $j$ occur in the same block of $\pi$. For example, in the case $W=\symm_9$, the flat $X$ defined by equations
$$
\{ x_1=x_3=x_6=x_7, \,\, x_4=x_5, \,\, x_8=x_9\}
$$ 
corresponds to the set partition 
$$
\pi= \{B_1,B_2,B_3,B_4\} =\{\{1,3,6,7\},\{2\},\{4,5\},\{8,9\}\}.
$$
\end{example}

\subsection{Noncrossing partitions for $W$}
The reflecting hyperplanes $\Cox(\Phi)$ decompose $V$ into connected components called {\sf chambers}, on which $W$ acts simply-transitively. Choose a {\sf fundamental chamber} $c_0$ for this action and for each reflecting hyperplane say that $c_0$ lies in the ``positive" half-space. This induces a choice of {\sf positive roots} $\Phi^+$ --- the ``positive'' normal vectors --- and {\sf simple roots} $\Delta=\{\alpha_1,\ldots,\alpha_n\}\subseteq\Phi^+$ --- corresponding to the hyperplanes that bound $c_0$. The reflections $S=\{s_1,\ldots,s_n\}$ defined by the simple roots $\Delta$ endow $W$ with a {\sf Coxeter system} $(W,S)$. Fix once and for all a {\sf Coxeter element} $c=s_1s_2\cdots s_n$ by ordering $S$. The {\sf Coxeter number} $h$ is the multiplicative order of $c$, and it turns out to equal the maximum of the fundamental degrees $d_1,\ldots,d_n$ mentioned earlier; \cite[\S 3.17]{Humphreys}.

For all $w\in W$, let $\ell_T(w)$ denote the minimal value of $k$ such that $w=t_1\cdots t_k$ with $t_i\in T$ for all $i$. One calls this the {\sf reflection length} on $W$. It turns out (see Carter \cite{Carter}) that $\ell_T(w)$ is equal to the codimension of the $W$-fixed space $V^w\subseteq V$, that is, $\ell_T(w)=\codim V^w=\dim V-\dim V^w$. Then one defines the {\sf absolute order} $\leq_T$ on $W$ by setting
\begin{equation}
u \leq_T v \Longleftrightarrow \ell_T(v) = \ell_T(u) + \ell_T(u^{-1}v)
\end{equation}
for all $u, v \in W$.  This poset is graded with rank function $\ell_T$. It has a unique minimal element $1\in W$ but in general many maximal elements (the elements with trivial fixed space), among which is the conjugacy class of Coxeter elements. Following Brady and Watt \cite[\S 4]{BradyWatt} 
and Bessis \cite[Defn. 2.1.3]{Bessis},
make the following definition.

\begin{defn}
\label{noncrossing-partitions-definition}
Define the poset $NC(W)$ of {\sf $W$-noncrossing partitions} as the interval $[1,c]_T$ in absolute order. Brady and Watt \cite[Theorem 2]{BradyWatt2} showed that this poset embeds into the partition lattice 
$$
\begin{array}{rcl}
NC(W)& \hookrightarrow &\LLL \\
w & \mapsto & V^w
\end{array}
$$
We will sometimes identify $NC(W)$ with its image under this embedding, and refer to the elements of $NC(W)$ as the {\sf noncrossing flats} $X\in\LLL$.
\end{defn}

Because Coxeter elements form a conjugacy class, and because conjugation by $w\in W$ is a poset isomorphism $[1,c]_T\cong [1,wcw^{-1}]_T$, we do not include $c$ in the notation for $NC(W)$. Furthermore, the map $w\mapsto c^{d}wc^{-d}$ is a poset automorphism, which corresponds to the action $X\mapsto c^dX$ on noncrossing flats.  

\begin{example}
For $W=\symm_n$ one can choose as Coxeter generators the set of adjacent transpositions $S=\{s_1,\ldots,s_{n-1}\}$, where $s_i=(i,i+1)$. A natural choice of Coxeter element is the $n$-cycle $c=s_1 s_2\cdots s_{n-1}=(1,2,\ldots,n)$. In this case the noncrossing flats $X\in\LLL$ correspond to partitions $\pi=\{B_1,\ldots,B_\ell\}$ that are {\em noncrossing} in the sense of 
Kreweras
 \cite{Kreweras} and Poupard \cite{Poupard}. That is, if one draws the numbers $[n]=\{1,2,\ldots,n\}$ clockwise around a circle then the convex hulls of its blocks $B_i$ are pairwise-disjoint (i.e. ``noncrossing''). For example the set partition
$\pi=\{\{1,3,6,7\},\{2\},\{4, 5\},\{8,9\}\}$ is noncrossing, as shown here:

\begin{center}
\includegraphics[scale=1]{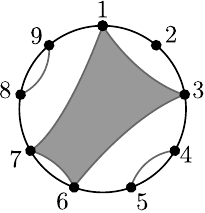}
\end{center}

\noindent Given such a noncrossing partition/flat $\pi=\{B_1,\ldots,B_\ell\}$, one recovers a permutation $w\in NC(W)$ by converting each block $B_i$ into a cycle, oriented clockwise, and then multiplying these. For example the noncrossing partition $\pi$ shown above corresponds to $w=(1,3,6,7)(2)(4,5)(8,9)$.
\end{example}

\subsection{Nonnesting partitions for $W$} The concept of nonnesting partitions is more recent than that of noncrossing partitions, and their original definition was completely general. The following definition is due to Postnikov \cite[Remark 2]{Reiner}.
\begin{defn}
\label{nonnesting-partitions-definition}
Let $W$ be a Weyl group,\footnote{Note that the same definition can be made for noncrystallographic types, but that the resulting structures do not have nice properties. It is an open problem to generalize ``nonnesting partitions" to this case.} so that there exists a 
root poset $(\Phi^+,\leq)$, defined by setting $\alpha\leq\beta$ if and only if $\beta-\alpha\in\NN\Phi^+$. The set $NN(W)$ of {\sf $W$-nonnesting partitions} is the collection of antichains (sets of pairwise-incomparable elements) in $(\Phi^+,\leq)$. Athanasiadis and Reiner \cite[Cor. 6.2]{AthanasiadisR} showed that there is an embedding
$$
\begin{array}{rcl}
NN(W)& \hookrightarrow &\LLL \\
A & \mapsto & \cap_{\alpha \in A} H_\alpha,
\end{array}
$$
which defines a partial order on $NN(W)$. We will sometimes identify $NN(W)$ with its image under this embedding, and speak about {\sf nonnesting flats} $X\in\LLL$.
\end{defn}

\begin{example}
For $W=\symm_n$, one can choose the root system and positive roots as follows:
$$
\begin{aligned}
\Phi&:=\{e_i-e_j: 1 \leq i \neq j \leq n\}, \\
\Phi^+&:=\{e_i-e_j: 1 \leq i < j \leq n\}. \\
\end{aligned}
$$
Draw the numbers $[n]=\{1,\ldots,n\}$ in a line. Then one has $\alpha=e_i -e_j \leq \beta=e_r - e_s$ in the root poset if and only if the semicircular arc connecting $i$ and $j$ is enclosed (i.e. ``nested'') inside the arc connecting $r$ and $s$. Thus an antichain $A$ in $\Phi^+$ gives rise to a set of arcs whose transitive closure is a {\sf nonnesting set partition} of $[n]$. For example, let $W=\symm_6$. The antichain $A=\{e_1-e_3, e_2-e_5, e_3-e_6\}$ corresponds to the nonnesting flat
$X=\cap_{\alpha \in A} H_\alpha=\{x_1=x_3=x_6,\,\, x_2=x_5\}$ and the nonnesting set partition $\pi=\{\{1,3,6\},\{2,5\},\{4\}\}$. Below we show the antichain $A$ (where $ij$ stands for $e_i-e_j$). The corresponding nonnesting partition is on the right.

\medskip
\begin{center}
\includegraphics[scale=1]{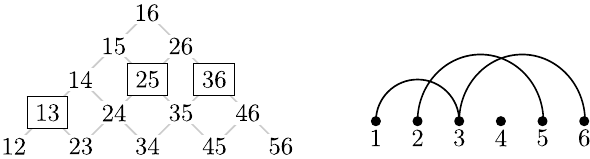}
\end{center}
\end{example}

\subsection{Noncrossing and nonnesting parking functions} Now we define a family of $W$-modules, two of which are the standard $W$-parking space, and the new $W$-noncrossing parking space.

Define an equivalence relation on the set of ordered pairs 
$$
W\times\LLL=\{(w,X):w\in W, X\in\LLL\}
$$ 
by setting $(w,X) \sim (w',X')$ when one has both
\begin{itemize}
\item $X=X'$, that is, the flats are equal, and
\item $wW_X = w'W_X$ where $W_X$ is the pointwise $W$-stabilizer of the flat $X$.
\end{itemize}
Let $[w,X]$ denote the equivalence class of $(w,X)$, and note that the left-regular action of $W$ on itself in the first coordinate descends to a $W$-action on equivalence classes:
$
v.[w,X]:=[vw,X].
$

\begin{defn}
Define the {\sf $W$-nonnesting} and {\sf $W$-noncrossing parking functions} as the following $W$-stable subsets of $(W\times\LLL)/\sim$:
$$
\begin{aligned}
\Park^{NN}_W&:=\{[w,X]: w \in W \text{ and } X \in NN(W)\}\\
\Park^{NC}_W&:=\{[w,X]: w \in W \text{ and } X \in NC(W)\}.
\end{aligned}
$$
\noindent
One could alternately phrase\footnote{Thanks to an anonymous referee for suggesting this phrasing.} 
$\Park^{NN}_W$ (resp. $\Park^{NC}_W$) as the set of
pairs $(wW_X,X)$ with $X$ in $NN(W)$ (resp. in $NC(W)$) 
and $wW_X$ in $W/W_X$ a coset modulo $W_X$.

Both of these subsets inherit the $W$-action $v.[w,X]=[vw,X]$, but the second set $\Park^{NC}_W$ also has a $W\times C$-action, defined by letting $C$ act on the right:
$$
(v,c^d).[w,X]:=[vwc^{-d} \,\, , \,\, c^d(X)]
$$
\end{defn}

\begin{example}
\label{type-A-pictorial-example}
One can think of type $A_{n-1}$ noncrossing parking functions pictorially. Consider $[w,X]\in\Park_{\symm_n}^{NC}$. Then the noncrossing flat $X$ corresponds to a noncrossing partition $\pi=\{B_1,\ldots,B_\ell\}$ of $[n]$ and the permutation $w\in\symm_n$ is considered only up to its coset $wW_X$ for the Young subgroup $W_X=\symm_{B_1}\times\cdots\times\symm_{B_\ell}$. Thus one can think of $w$ as a function that assigns to each block a set of labels $w(B_i)=\{w(j)\}_{j\in B_i}$. Visualize this by labeling each block $B_i$ in the noncrossing partition diagram by the set $w(B_i)$. For example, the leftmost picture in the figure below shows  the noncrossing parking function $[w,X]\in\Park_{\symm_9}^{NC}$, where $X$ corresponds to $\pi = \{\{ 1, 3, 7 \},\{ 2\} \{ 4, 5 , 6\}, \{ 8, 9\} \}$, and
$$
\begin{aligned}
wW_X&=\left(
\begin{matrix}
1 & 2 & 3 & 4 & 5 & 6 & 7 & 8 & 9 \\
1 & 8 & 3 & 2 & 4 & 6 & 9 & 5 & 7  
\end{matrix}
\right)W_X  \\
&=\left(
\begin{array}{ccc|c|ccc|cc}
1 & 3 & 7 & 2 &4 & 5 & 6 & 8 & 9 \\
1 & 3 & 9 & 8 &2 & 4 & 6 & 5 & 7
\end{array}
\right)W_X.
\end{aligned}
$$

\begin{center}
\includegraphics[scale=1]{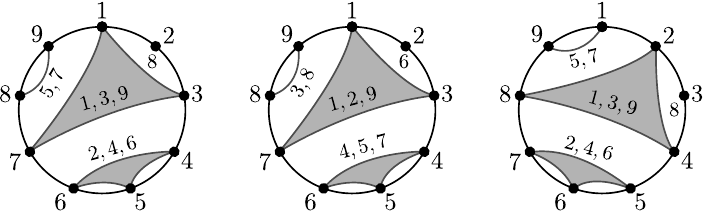}
\end{center}
\medskip

The $W\times C$-action is also easy to visualize. If one chooses the standard $n$-cycle for our Coxeter element, $c=s_1s_2\cdots s_{n-1}=(1,2,\ldots,n)$, then the action $[w,X]\mapsto [vwc^{-d},c^d(X)]$ of the element $(v,c^d)\in \symm_n \times C$ just permutes the labels by $v\in\symm_n$ and rotates the picture by $2\pi d/n$ clockwise.  For example, let $v=(1)(2,7,3)(4)(5,8,6)(9)\in\symm_9$. The middle and right pictures in the figure show $[vx,X]$ and $[wc^{-1},c(X)]$, respectively.
\end{example}

\subsection{The coincidence of $W$-representations}
\label{coincidence-of-perm-reps-section}
We consider several {\it $W$-sets} (and $W \times C$-sets), that is, sets with a $W$-action, such as $\Park^{NN}_W, \Park^{NC}_W$, or the set of cosets 
$W/W_X=\{wW_X: w \in W\}$ under left-translation.

\begin{defn}
Given a finite group $G$ and two $G$-sets
$A_1, A_2$, write $A_1 \cong_G A_2$ to mean there is 
a $G$-equivariant bijection $A_1 \rightarrow A_2$.
Let $\CC[A]$ denote 
the $G$-permutation representation associated to the 
$G$-set $A$.  
Given two $G$-representations $V_1,V_2$, write
$V_1 \cong_{\CC[G]} V_2$ if they are isomorphic 
as $\CC[G]$-modules, or equivalently if they have the same character.
\end{defn}

Note that $A_1 \cong_G A_2$ implies $\CC[A_1] \cong_{\CC[G]} \CC[A_2]$,
but the converse need not 
hold\footnote{Although see Proposition~\ref{W-sets-converse} for a situation
where the converse does hold.}.
As an example of the notations, 
$\CC[W/W_X] \cong_{\CC[G]} \Ind_{W_X}^W \triv_{W_X}$, and
by definition 
\begin{equation}
\label{definitional-permutation-reps}
\begin{aligned}
\Park^{NN}_W &\cong_W \bigsqcup_{X \in NN(W)} W/W_X, \\
\Park^{NC}_W &\cong_W \bigsqcup_{X \in NC(W)} W/W_X.  \\
\end{aligned}
\end{equation}
Recall that Shi \cite{Shi-plus-sign-types} and Cellini and Papi \cite[\S 4]{CelliniPapi} established a bijection between antichains $NN(W)$ in the root poset and $W$-orbits on the finite torus $Q/(h+1)Q$. It turns out that this bijection sends a nonnesting flat $X\in NN(W)$ to a $W$-orbit whose stabilizer is conjugate to $W_X$; see Athanasiadis \cite[Lemma 4.1, Theorem 4.2]{Athanasiadis2}. Hence one has an isomorphism of $W$-sets:
$$
Q/(h+1)Q \cong_W \Park^{NN}_W.
$$
Furthermore, it was checked {\bf case-by-case} in \cite[Theorem 6.3]{AthanasiadisR} that $NN(W)$ and $NC(W)$ contain the same number of flats in each $W$-orbit on the partition lattice $\LLL$. (This generalizes an observation of Stanley in type $A$; see \cite[Proposition 2.4]{StanleyParking}.) Hence one also has this isomorphism of $W$-sets:
\begin{equation}
\label{case-by-case-AthR-check}
\Park^{NN}_W \cong_W \Park^{NC}_W.
\end{equation}

\begin{example}
In the case of the symmetric group $W=\symm_n$ with $n\leq 3$, {\bf every} partition of $[n]$ is both noncrossing and nonnesting, hence in these cases one has an equality $\Park_{\symm_n}^{NN}=\Park_{\symm_n}^{NC}$. For $n=4$ there is exactly one crossing set partition, namely $\pi_1=\{\{1,3\},\{2,4\} \}$, and exactly one nesting set partition, namely $\pi_2=\{\{1,4\},\{2,3\}\}$. However, note that $\pi_1$ and $\pi_2$ correspond to flats $X_1,X_2$ in the same $W$-orbit on $\LLL$, hence one still has an isomorphism because
$$
\Park^{NN}_{\symm_4}
\cong_W \bigsqcup_{X \in \LLL - \{X_2\}} W/W_X
\cong_W \bigsqcup_{X \in \LLL - \{X_1\}} W/W_X
\cong_W \Park^{NC}_{\symm_4}.
$$

\end{example}

\subsection{The algebraic $W$-parking space}
\label{park-alg-defn-section}
Next we will define a new $W$-parking space {\em algebraically}. Extend scalars from $V\cong\RR^n$ to $\CC^n$ and let $W$ act on the polynomial algebra
isomorphic to the symmetric algebra of the dual space $V^*$
$$
\CC[V]=\Sym(V^*)=\CC[x_1,\ldots,x_n],
$$
where $x_1,\ldots,x_n$ is a $\CC$-basis for $V^*$.
It is a subtle consequence of the representation 
theory of {\sf rational Cherednik algebras} 
(see Berest, Etingof and Ginzburg \cite{BEG}, Gordon \cite{Gordon}, and
Etingof \cite{Etingof} \cite[\S 4]{EtingofMa}),
that there exist {\sf homogeneous systems of parameters}
$\Theta=(\theta_1,\ldots,\theta_n)$ of degree $h+1$ inside
$\CC[V]$, with the following property:
\begin{quote}
The $\CC$-linear
isomorphism defined by
\begin{equation}
\label{strong-hsop-setup}
\begin{array}{rcl}
V^* &\longrightarrow & \CC \theta_1 + \cdots + \CC \theta_n \\
x_i &\longmapsto & \theta_i
\end{array}
\end{equation}
is $W$-equivariant.  In particular, the linear span
$\CC \theta_1 + \cdots +\CC \theta_n$ carries a copy of the
dual\footnote{Since the reflection groups considered
in this paper are real, one always has a $\CC[W]$-module isomorphism 
$V \cong_{\CC[W]} V^*$. However, we use $V^*$ to be consistent with the case of complex 
reflection groups.  We hope that some of our conjectures can be extended to complex reflection groups, at least in the well-generated case considered by Bessis and Reiner
\cite{BessisR}, and perhaps even in the arbitrary case considered 
by Gordon and Griffeth \cite{GordonGriffeth}.} reflection representation $V^*$.
\end{quote}

In some cases (for example in types $B/C$ and $D$, and in cases of rank $\leq 2$) one can choose the coordinate functionals $x_1,\ldots,x_n$ in $V^*$ such that $(\theta_1,\ldots,\theta_n)=(x_1^{h+1},\ldots,x_n^{h+1})$; see Sections \ref{type-BC-proof-section} and \ref{type-D-proof-section} below. However, already in type $A$ the construction of such an hsop is somewhat tricky --- see Haiman \cite[Proposition 2.5.4]{Haiman} or Chmutova and Etingof \cite[\S 3]{ChmutovaEtingof} --- and for the exceptional real reflection 
groups we know of no simple construction.

When one has such a $\Theta$ it is natural to consider the quotient ring $\CC[V]/(\Theta)$. For example, this quotient occurs in the rational Cherednik theory \cite{Gordon, BEG, Etingof, EtingofMa} as the finite-dimensional irreducible module $L_{(h+1)/h}(\triv_W)$ corresponding to the trivial representation $\triv_W$ of $W$, and taken at the rational parameter $\frac{h+1}{h}$. In the crystallographic case it is known (see \cite[\S 5]{Gordon}, \cite[eqn. (5.5)]{BessisR}) that $\CC[V]/(\Theta)$ is isomorphic to the permutation representation $\CC[Q/(h+1)Q] \cong_{\CC[W]} \CC[\Park_W^{NN}]$. We wish to {\bf deform} the quotient $\CC[V]/(\Theta)$ somewhat, considering instead the following.

\begin{defn}
Let $W$ be a real irreducible reflection group $W$, with $\Theta=(\theta_1,\ldots,\theta_n)$ and $(x_1,\ldots,x_n)$ chosen as in \eqref{strong-hsop-setup}. Consider the ideal 
\begin{equation}
\label{deformed-ideal-definition}
(\Theta-\xx) :=(\theta_1 - x_1 , \ldots, \theta_n-x_n)
\end{equation}
and define the {\sf algebraic $W$-parking space} as the quotient
ring 
$$
\Park^{alg}_W := \CC[V] / (\Theta-\xx).
$$
This quotient has the structure of a $W\times C$-representation since the ideal $(\Theta-\xx)$ is stable under the following two commuting actions on $\CC[V]=\CC[x_1,\ldots,x_n]$:
\begin{itemize}
\item the action of $W$ by linear substitutions, and
\item the action of $C=\langle c \rangle$ 
by scalar substitutions $c^d(x_i) = \omega^{-d} x_i$, with $\omega:=e^{\frac{2\pi i}{h}}$.
\end{itemize}
\end{defn}

Note that we have not included the choice of $\Theta$ in the 
notation $\Park^{alg}_W$.  This is justified by the following
proposition, which we will prove in Section~\ref{deformation-section} below.

\begin{proposition}
\label{deformation-isomorphism}
For every irreducible real reflection group $W$,
and for {\bf any} choice of $\Theta$ satisfying
\eqref{strong-hsop-setup}, one has an isomorphism 
of $W \times C$-representations
$$
\Park^{alg}_W :=
 \CC[V]/(\Theta-\xx) 
\quad \cong_{\CC[W \times C]} \quad
\CC[V]/(\Theta).
$$
\end{proposition}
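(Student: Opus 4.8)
The plan is to exploit the fact that the ideal $(\Theta - \xx)$ is a ``filtered deformation'' of $(\Theta)$: each generator $\theta_i - x_i$ has top-degree part $\theta_i$ (since $\deg \theta_i = h+1 \geq 2 > 1 = \deg x_i$), so the associated graded of the quotient $\CC[V]/(\Theta-\xx)$ with respect to the degree filtration recovers $\CC[V]/(\Theta)$. Concretely, I would first observe that because $\Theta$ is a homogeneous system of parameters in the polynomial ring $\CC[V]$, the quotient $\CC[V]/(\Theta)$ is a finite-dimensional complete intersection of dimension $(h+1)^n$, with $\CC[V]$ free of that rank over the subalgebra $\CC[\Theta]$. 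The key algebraic step is then to show that $\CC[V]/(\Theta-\xx)$ has the \emph{same} dimension $(h+1)^n$: one shows $\CC[V]$ is also a free $\CC[\theta_1-x_1,\ldots,\theta_n-x_n]$-module of rank $(h+1)^n$, or equivalently that $\theta_1-x_1,\ldots,\theta_n-x_n$ is again a homogeneous system of parameters for the \emph{filtered} structure. This follows from a standard Nakayama/graded-lifting argument: a set of monomials descending to a basis of $\CC[V]/(\Theta)$ also spans $\CC[V]/(\Theta-\xx)$ (by induction on degree, using that $\theta_i \equiv x_i \bmod (\Theta-\xx)$ lets one rewrite any high-degree monomial in terms of lower-degree ones modulo the ideal), and a dimension count forces this spanning set to be a basis.

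Next I would upgrade this from a vector-space isomorphism to a $W \times C$-equivariant one. The natural map is \emph{not} the identity on representatives but rather the one induced by the filtration: let $F_{\leq k}\CC[V]$ be polynomials of degree $\leq k$, and consider the composite $F_{\leq k}\CC[V] \hookrightarrow \CC[V] \twoheadrightarrow \CC[V]/(\Theta-\xx)$. A monomial basis argument as above shows that for $k$ large this is surjective with kernel $F_{\leq k}\CC[V] \cap (\Theta-\xx)$, and the associated graded of this kernel is exactly the homogeneous ideal $(\Theta)$. Thus $\mathrm{gr}\bigl(\CC[V]/(\Theta-\xx)\bigr) \cong \CC[V]/(\Theta)$ as graded vector spaces. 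Since the $W$-action on $\CC[V]$ preserves degree, it preserves the filtration, and the isomorphism of associated gradeds is automatically $W$-equivariant. For the $C$-action one must be slightly careful: $C$ acts on $\CC[V]/(\Theta-\xx)$ via the substitution $x_i \mapsto \omega^{-1}x_i$ combined with the identification $\theta_i \equiv x_i$, whereas on $\CC[V]/(\Theta)$ it acts purely by the scalar substitution; I would check that because $\theta_i - x_i$ is sent to $\omega^{-(h+1)}\theta_i - \omega^{-1}x_i = \omega^{-1}(\theta_i - x_i)$ (using $\omega^h = 1$), the ideal $(\Theta-\xx)$ is $C$-stable with the \emph{same} weight behavior as $(\Theta)$, so that passing to the associated graded intertwines the two $C$-actions. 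Because $W$ and $C$ commute and both preserve the filtration, the associated-graded isomorphism respects the full $W \times C$-structure.

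The main obstacle, I expect, is making the filtration/Nakayama argument fully rigorous in the equivariant setting — specifically, confirming that $\mathrm{gr}(\CC[V]/(\Theta-\xx))$ really is $\CC[V]/(\Theta)$ rather than some further degeneration, and that no subtlety arises from $\Theta$ not being a \emph{regular} sequence a priori. This is handled by the complete-intersection property: $\Theta$ being an hsop in the Cohen--Macaulay (indeed regular) ring $\CC[V]$ forces it to be a regular sequence, hence $\CC[V]$ is free over $\CC[\Theta]$, and one transfers this freeness across the filtration. An alternative, perhaps cleaner, route that sidesteps associated gradeds entirely is a Rees-algebra / flat-family argument: introduce a parameter $t$ and the ideal $(\theta_1 - t\,x_1, \ldots, \theta_n - t\,x_n)$ in $\CC[V][t]$, show the quotient is flat over $\CC[t]$ (again via the hsop/dimension count, uniform in $t$), observe $W \times C$ acts fiberwise (with $C$ scaling $t$ appropriately), and conclude that the fibers at $t=0$ and $t=1$ are isomorphic as $W \times C$-representations because flat families of representations of a finite group are locally constant. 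Either way the heart of the matter is the uniform dimension bound, and everything else is bookkeeping.
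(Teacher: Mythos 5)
Your proposal is correct and follows essentially the same route as the paper: filter $\CC[V]/(\Theta-\xx)$ by polynomial degree, show its associated graded is $\CC[V]/(\Theta)$ using that the hsop $\Theta$ is a regular sequence (the paper's Lemma~\ref{filtration-factor-isomorphism-lemma} carries out the detail you defer to the complete-intersection property, via Koszul first syzygies), and then observe that since both ideals are $W\times C$-stable --- exactly your computation $c(\theta_i-x_i)=\omega^{-1}(\theta_i-x_i)$ --- all the maps are equivariant. The flat-family variant you mention is a reasonable alternative but is not the argument the paper uses.
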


While it is conceivable that the ring structures of $\CC[V]/(\Theta)$ and $\CC[V]/(\Theta - \xx)$ may depend on $\Theta$,
we will only be interested in the $\CC[W \times C]$-module structures of these objects, and hence denote them both
by $\Park^{alg}_W$, suppressing reference to $\Theta$.
This proposition has several important consequences. For example, forgetting the $C$-action, one obtains the following string of $\CC[W]$-module isomorphisms:
\begin{equation}
\label{string-of-isomorphisms}
\begin{aligned}
\Park^{alg}_W &:= \CC[V]/(\Theta-\xx) \\
&\cong_{\CC[W]} \CC[V]/(\Theta) \\
&\cong_{\CC[W]} \CC[Q/(h+1)Q] \\
&\cong_{\CC[W]} \CC[\Park^{NN}_W] \\
&\cong_{\CC[W]} \CC[\Park^{NC}_W].
\end{aligned}
\end{equation}

\subsection{The Main Conjecture}
\label{main-conjecture-subsection}
Our Main Conjecture is a stronger and more direct geometric connection 
between the first and last terms in \eqref{string-of-isomorphisms}, eliminating
the crystallographic hypothesis.  Here we consider the
subvariety/zero locus $V^\Theta$ cut out in $V$ by the ideal $(\Theta-\xx)$.
The notation $V^\Theta$ is meant to be suggestive of the fact that
this subvariety is the subset of $V$ fixed pointwise 
under the map $\Theta: V \longrightarrow V$ that sends the element
of $V$ having coordinates $(x_1,\ldots,x_n)$ to the element of $V$ having
coordinates $(\theta_1,\ldots,\theta_n)$.  Note that 
Proposition~\ref{deformation-isomorphism} implies
$\CC[V]/(\Theta-\xx)$ is a $\CC$-vector space of dimension $(h+1)^n$,
since $\CC[V]/(\Theta)$ is.  In particular,
this subvariety $V^\Theta \subseteq V$ can contain at most $(h+1)^n$
points, and contains exactly this many points when counted with multiplicity.
Note that the $W \times C$-action on $V$, in which an element $c^d$ in $C$ 
scales $V$ by the root-of-unity $\omega^d$, restricts to one on $V^\Theta$.
\vskip.1in
\noindent
{\bf Main Conjecture.} 
{\it
Let $W$ be an irreducible real reflection group.
\newline 
\begin{tabular}{ll}
{\sf (strong version) }& {\bf For all choices} of $\Theta$ as in \eqref{strong-hsop-setup}, one has that ...\\
{\sf (intermediate version) } &
{\bf There exists a} choice of $\Theta$ as in \eqref{strong-hsop-setup} 
such that ...
\end{tabular}
\newline
... the subvariety $V^{\Theta}$ inside $V$ consists of $(h+1)^n$
distinct points, that have a $W \times C$-equivariant bijection
to the set $\Park^{NC}_W$, that is, $V^{\Theta} \cong_{W \times C} \Park^{NC}_W$.
}
\vskip.1in
\noindent
In other words, the ideal $(\Theta)$ has its zero locus supported
at the origin in $V$, with scheme structure as a fat point
of multiplicity $(h+1)^n$, while $(\Theta-\xx)$ cuts out a
subvariety $V^{\Theta}$ that deforms this fat point at the origin,
blowing it apart into $(h+1)^n$ reduced points, 
carrying a $W \times C$-representation identifiable as\footnote{Actually,
this would show that the two  $W \times C$-representations are
contragredient, since $\CC[V]/(\Theta-\xx)$ is the space of
functions on $V^\Theta$.  But permutation representations are
self-contragredient, so this does not affect the $W \times C$-isomorphism
statement.}
that of the noncrossing parking functions $\Park^{NC}_W$.

The appendix to this paper contains a uniform
argument communicated to the authors by Etingof
\cite{EtingofComm} which implies that for any irreducible real reflection group $W$, there exists
an hsop $\Theta$ as in \eqref{strong-hsop-setup} such that $V^{\Theta}$ has
$(h+1)^n$ distinct points.  Unfortunately, the hsop $\Theta$ involved arises from deep 
results in the theory of rational Cherednik algebras and determining the
$W \times C$-set structure of $V^{\Theta}$ has been so far intractable.

\begin{example}
To gain intuition, we check the strong version of the Main Conjecture
in type $A_1$, that is, when $W$ has rank $1$.  Here $W = \{1, s\}$ acts on 
$V = \CC^1$ by $s.v = -v$.  Hence $W$ acts on the basis element
$x$ for $V^*$, as well as on $\CC[V] = \Sym(V^*)=\CC[x]$,
via $s(x)=-x$.  The only choice of a Coxeter element
is $c = s$, with Coxeter number $h = 2$.  An hsop $\theta$ of degree $h+1=3$
must take the form $\theta=\alpha x^3$ for some $\alpha$ in $\CC^\times$.
Any such $\theta$ has the map $x \mapsto \alpha x^3$ 
being $W$-equivariant as required by \eqref{strong-hsop-setup}.

As depicted schematically here
\begin{center}
\epsfxsize=30mm
\epsfbox{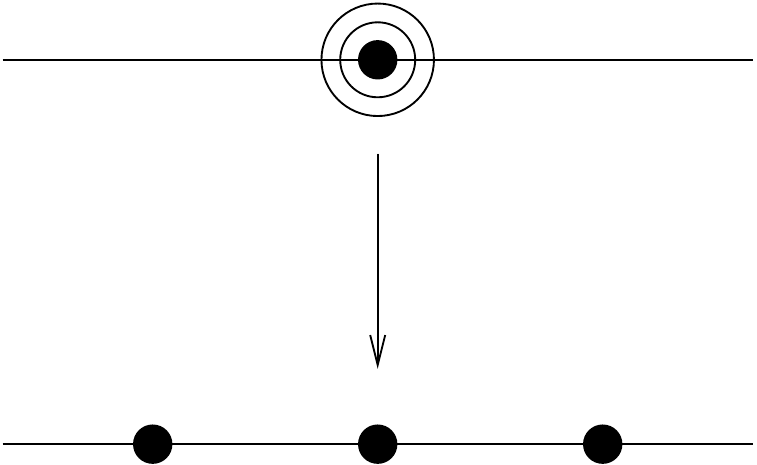}
\end{center}
the subvariety of $V=\CC^1$ cut out by $\theta=\alpha x^3$ is the origin $\{0\}$,
cut out with multiplicity $3(=h+1)$.  Meanwhile, the subvariety $V^\theta$ cut out by
$$
\theta-x=\alpha x^3-x=\alpha x(x- \alpha^{-\frac{1}{2}})(x+\alpha^{-\frac{1}{2}})
$$
consists of $3$ {\sf distinct} points $\{0,\pm  \alpha^{-\frac{1}{2}} \}$.
Furthermore, one can check that
the $W \times C$-permutation module structure
on these $3$ points of $\Park^{alg}_W=V^\theta$
matches that of $\Park^{NC}_W$.  
In this case, there are only two flats $X$ in the intersection
lattice $\LLL$, namely the origin $\{0\}$ and the whole space $V$,
both of which are noncrossing flats, since $c=s$ has $V^s=\{0\}$
and $1$ has $V^1=V$.  Thus $NC(W)=[1,s] \cong \{ \{0\}, V \}$.
\begin{enumerate}
\item[$\bullet$]The singleton $W \times C$-orbit $\{0\}$ within
$V^\theta$ matches the singleton $W \times C$-orbit
$\{ \,\, [1,\{0\}] \,\, \}$ in $\Park^{NC}_W$, both carrying trivial $W \times C$-action.
\item[$\bullet$]
The two-element $W \times C$-orbit $\{\pm \alpha^{-\frac{1}{2}} \}$
within $V^\theta$ matches the two-element $W \times C$-orbit 
$\{ [1,V], [s,V] \}$ in $\Park^{NC}_W$: in both cases either
element in the orbit has $W \times C$-stabilizer subgroup 
equal to $\{ (1,1), (s,s) \}$.
\end{enumerate}
\end{example}

It can be shown that certain $W \times C$-subsets of $V^{\Theta}$ carry actions 
which provide evidence for the Main Conjecture.
Define the {\sf dimension} of a point $p \in V^{\Theta}$ to
be the minimal dimension of a flat $X \in \LLL$ such that
$p \in X$.  Then one has a disjoint union decomposition
\begin{equation*}
V^{\Theta} = V^{\Theta}(0) \uplus V^{\Theta}(1) \uplus \dots \uplus V^{\Theta}(n),
\end{equation*}
where $n = \dim(V)$ is the rank of $W$ and $V^{\Theta}(d)$ denotes the set
of $d$-dimensional points in $V^\Theta$.  Moreover, each of the sets $V^{\Theta}(d)$
is $W \times C$-stable.

Note that since $NC(W)=[1,c]_T$ has unique
bottom and top elements $1,c$, inside
$\Park^{NC}_W$ there are two corresponding
$W \times C$-orbits:
the {\it $W$-trivial orbit} $\{ [1,\{0\}] \}$, 
carrying the trivial $W \times C$-representation, and 
the {\it $W$-regular orbit} $\{ [w,V] \}_{w \in W}$
carrying the coset representation 
$\left( W \times C \right)/ \langle (c,c) \rangle$.
Section~\ref{extreme-orbits-section} proves the following
result on the counterparts\footnote{See
Haiman \cite[Cor. 7.4.1]{Haiman}
for the counterpart to (iii) for $\Park^{NN}_W$, in
the guise of $Q/(h+1)Q$.}
in $V^\Theta$, showing that  $V^\Theta(d)$ for $d \in \{0,1,n\}$ are
all as described in the Main Conjecture.
\begin{proposition}
\label{extreme-orbits-proposition}
For $W$ an irreducible real reflection group of rank $n$,
and for all choices of $\Theta$ as in \eqref{strong-hsop-setup}, 
one has the following.
\begin{enumerate}
\item[(i)] The set $V^{\Theta}(0) = \{ 0 \}$ of $0$-dimensional points in $V^{\Theta}$
is the unique $W \times C$-orbit in $V^{\Theta}$ carrying the trivial 
$W \times C$-representation.
\item[(ii)] There exists a $W \times C$-equivariant injection
$V^{\Theta}(1) \hookrightarrow \Park^{NC}_W$ 
whose image is precisely the set of 
noncrossing parking functions of the form $[w,X]$ with $\dim(X) = 1$.
\item[(iii)] The set $V^{\Theta}(n)$ of $n$-dimensional points is the unique $W$-regular
orbit of points in $V^{\Theta}$.
\end{enumerate}
Furthermore, every point in the subsets $V^{\Theta}(0)$, $V^{\Theta}(1)$, and $V^{\Theta}(n)$
of $V^{\Theta}$ is cut out by the ideal $(\Theta - \xx)$ in a reduced fashion, that is, 
with multiplicity one.
\end{proposition}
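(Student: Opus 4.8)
I would work geometrically with the self-map $\Theta\colon V\to V$ sending the point with coordinates $(x_1,\dots,x_n)$ to the point with coordinates $(\theta_1,\dots,\theta_n)$, so that $V^{\Theta}$ is its fixed-point locus. This map is homogeneous of degree $h+1$, is $W$-equivariant by \eqref{strong-hsop-setup}, and satisfies $\Theta(\zeta^{-1}v)=\zeta^{-1}\Theta(v)$ with $\zeta:=e^{2\pi i/h}$; the structural key is that $\Theta$ \emph{preserves every flat}: if $X=V^{W_X}\in\LLL$, then for $v\in X$ and $w\in W_X$ one has $w\Theta(v)=\Theta(wv)=\Theta(v)$, so $\Theta(v)\in V^{W_X}=X$. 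Hence $V^{\Theta}\cap X$ is the fixed locus of $\Theta|_X\colon X\to X$, a homogeneous degree-$(h+1)$ self-map whose only zero is the origin, hence a $0$-dimensional scheme of length $(h+1)^{\dim X}$; in particular every nonzero $p\in V^{\Theta}$ lies on the $\Theta$-stable line $\CC p$, on which $\Theta$ acts by $t\mapsto\lambda t^{h+1}$ with $\lambda\in\CC^{\times}$, so $\CC p$ contains exactly $h$ nonzero (reduced, within $\CC p$) points of $V^{\Theta}$, forming one orbit under the scalar $C$-action, and the dimension of $p$ equals $\dim X_p$ for $X_p$ the minimal flat through $p$, with $\CC p\subseteq X_p$. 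Part (i) follows at once: the only $0$-dimensional flat is $\{0\}$, so $V^{\Theta}(0)=\{0\}$; it carries the trivial $W\times C$-representation, and any $W\times C$-orbit carrying the trivial representation is a single $W$-fixed point, hence lies in $V^{W}=0$; and the Jacobian of $(\theta_1-x_1,\dots,\theta_n-x_n)$ at $0$ is $-\mathrm{Id}$ because $\deg\theta_i\ge 3$, so $0$ is reduced.

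\textbf{Part (iii).}
For existence I would invoke Springer theory: a $\zeta$-eigenvector $v_c$ of the Coxeter element $c$ is regular, and $\Theta(v_c)$ is again a $\zeta$-eigenvector (from $c\,\Theta(v_c)=\Theta(\zeta v_c)=\zeta^{h+1}\Theta(v_c)=\zeta\,\Theta(v_c)$), hence $\Theta(v_c)=\mu v_c$ with $\mu\neq 0$; rescaling by an $h$-th root of $\mu^{-1}$ produces $v'\in V^{\Theta}$, still regular, whose $W$-orbit is free of size $|W|$ and consists of $n$-dimensional points. Using $N_W(\CC v_c)=\langle c\rangle$, the $W\times C$-stabilizer of $v'$ is $\langle(c,c)\rangle$ --- exactly the stabilizer of $[1,V]$ in the $W$-regular orbit $\{[w,V]\}_{w\in W}$ of $\Park^{NC}_W$ --- giving the asserted $W\times C$-equivariant identification. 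For uniqueness and reducedness: by Proposition~\ref{deformation-isomorphism}, $\CC[V^{\Theta}]=\CC[V]/(\Theta-\xx)$ is $W$-isomorphic to $\CC[V]/(\Theta)$, which contains the sign representation $\det$ with multiplicity exactly one; indeed, from the Koszul resolution of $\CC[V]/(\Theta)$ over $\CC[V]$ (whose terms are $\CC[V]\otimes\Lambda^{\bullet}V^{*}$, with $\Lambda^{j}V^{*}$ placed in degree $j(h+1)$) together with Solomon's formula for the fake degree of $\Lambda^{j}V^{*}$ and the exponent symmetry $m_i+m_{n+1-i}=h$, one computes the graded $\det$-multiplicity of $\CC[V]/(\Theta)$ to be $q^{N}$, where $N=\sum_i m_i$. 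On the other hand, for any $p\in V^{\Theta}$, whose stabilizer $W_p$ is parabolic (Steinberg), the orbit $Wp$ contributes $\Ind_{W_p}^{W}\!\bigl(\mathcal O_{V^{\Theta},p}\bigr)$ to $\CC[V^{\Theta}]$, whose $\det$-multiplicity equals the multiplicity of $\det|_{W_p}$ in the local ring $\mathcal O_{V^{\Theta},p}$; this is $\ge 0$ in general, and equals the length of $\mathcal O_{V^{\Theta},p}$ when $W_p=1$. Comparing, there is at most one free $W$-orbit, it has length one, and combined with the orbit constructed above, $V^{\Theta}(n)$ is exactly that one reduced free orbit.

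\textbf{Part (ii).}
The $1$-dimensional points of $V^{\Theta}$ are exactly the nonzero points lying on $1$-dimensional flats $Y$; each $Y=V^{W_Y}$ is $\Theta$-stable and carries $h$ nonzero points of $V^{\Theta}$, permuted simply transitively by $C$, so $V^{\Theta}(1)=\bigsqcup_{\dim Y=1}\bigl(V^{\Theta}\cap Y\setminus\{0\}\bigr)$, on which $W$ permutes the flats inside each orbit $\mathcal O$ while $N_W(Y)/W_Y$ (cyclic of order dividing $h$) acts together with $C$ on the $h$-element fibre. On the noncrossing side, the parking functions $[w,X]$ with $\dim X=1$ decompose over each $W$-orbit $\mathcal O$ into $|\mathcal O\cap NC(W)|$ copies of $W/W_X$, and $|\mathcal O\cap NC(W)|=h/[N_W(X):W_X]$ --- the corank-one case of the equal distribution of $NC(W)$- and $NN(W)$-flats over $W$-orbits of $\LLL$, \cite[Thm.~6.3]{AthanasiadisR} --- so the two sides have the same cardinality $h\,|\mathcal O|$ over each $\mathcal O$. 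I would then build the equivariant bijection orbit by orbit: over a chosen $Y\in\mathcal O$, identify the $C$-torsor $V^{\Theta}\cap Y\setminus\{0\}$ with the set of cosets $[w,X]$ satisfying $wX=Y$ --- again an $h$-element $C$-torsor (here using $c^{d}\notin W_X$ for $0<d<h$ when $X\in NC(W)$) --- $N_W(Y)$-equivariantly, then propagate by $W$-equivariance. Finally, reducedness of these points as points of the ambient scheme $V^{\Theta}$ is a local matter: $d\Theta_p$ respects the orthogonal splitting $V=\CC p\oplus(\CC p)^{\perp}$ arising from $W_{\CC p}$-equivariance (since $\CC p=V^{W_{\CC p}}$), acts as the Euler scalar $h+1$ along $\CC p$, and one checks that $1$ is not among its eigenvalues on $(\CC p)^{\perp}$, so $d\Theta_p-\mathrm{Id}$ is nonsingular.

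\textbf{The main obstacle.}
I expect the hard step to be the explicit bijection in Part (ii). The counting is routine given \cite[Thm.~6.3]{AthanasiadisR}, but making the matching $W\times C$-equivariant and canonical forces a comparison of the two cyclic group actions over a $1$-dimensional flat $Y$ --- the $N_W(Y)/W_Y$- and $C$-actions on the $h$-element fibre of $V^{\Theta}$ over $Y$ versus the translation- and rotation-actions on the cosets $[w,X]$ with $wX=Y$ --- which in turn requires identifying exactly which powers of the Coxeter element can stabilize a noncrossing parabolic subgroup; this is where the combinatorics of $NC(W)$ genuinely enters. A secondary technical point is the control of the transverse eigenvalues of $d\Theta_p$ in the reducedness assertion for $V^{\Theta}(1)$: these all vanish when $\Theta=(x_1^{h+1},\dots,x_n^{h+1})$ (types $B/C$, $D$, and rank $\le 2$), but for general $W$ one must rule out the eigenvalue $1$ by a separate argument.
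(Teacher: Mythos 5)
Your parts (i) and (iii) are essentially the paper's own arguments and are fine: (i) uses irreducibility plus the Jacobian $-I_{n\times n}$ at the origin, exactly as in the paper; (iii) rescales a regular $\omega$-eigenvector of $c$ to land in $V^\Theta$, identifies its $W\times C$-stabilizer with $\langle(c,c)\rangle$, and then uses the fact that $\det$ occurs exactly once in $\CC[V]/(\Theta)\cong\CC[V]/(\Theta-\xx)$ (Lemma~\ref{Park-has-det-once-lemma}) to force uniqueness of the regular orbit and multiplicity one. Your Frobenius-reciprocity bookkeeping over the orbit decomposition of $\CC[V]/(\Theta-\xx)$ is only a cosmetic repackaging of the paper's Chinese Remainder/antisymmetrization argument, and your Koszul--Solomon computation of the graded $\det$-multiplicity $q^{|\Phi^+|}$ is equivalent to the paper's use of the Orlik--Solomon identity together with exponent duality.

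The genuine gap is in (ii), which is the substance of the proposition and the part for which the paper develops its geometric lemmas. Your plan hinges on two assertions that you do not prove and that are exactly the paper's Lemmas~\ref{W-C-orbit-lemma} and~\ref{one-dimensional-noncrossings-not-perpendicular}: (a) that the noncrossing lines inside one $W$-orbit of lines form a single $C$-orbit, which the paper proves uniformly (bipartite Coxeter element, Bessis's bijection $t\mapsto V^{ct}$ from reflections to noncrossing lines, half-lines in the dominant cone, and the $-w_0$ action on simple roots); your substitute count $|\mathcal O\cap NC(W)|=h/[N_W(X):W_X]$ does not follow from \cite[Thm.~6.3]{AthanasiadisR} as cited (that theorem only equates $NC$- with $NN$-counts per orbit, is case-by-case, and is crystallographic on the $NN$ side), and in any case equicardinality orbit-by-orbit cannot by itself produce a $W\times C$-equivariant map; and (b) the determination of which powers $c^d$ stabilize a noncrossing line and how they act on it ($d\equiv 0$ or $h/2 \bmod h$, acting by $\pm1$), which is precisely what makes your ``$h$-element $C$-torsor'' of cosets over $Y$ a torsor and what matches the $W\times C$-stabilizer \eqref{line-WxC-stabilizer} of a point of $V^\Theta(1)\cap X$ with the stabilizer of $[1,X]$; the paper obtains this from the fact that a noncrossing line is never orthogonal to the Coxeter plane. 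You yourself list both items as ``the main obstacle,'' so the proposal stops exactly where the proof of (ii) has to begin. (A side remark: for reducedness of $V^\Theta(1)$ the paper argues inside the line $X$, where $(\Theta-\xx)|_X=\alpha x^{h+1}-x$ has simple roots; your transverse-eigenvalue concern for $d\Theta_p$ on $(\CC p)^{\perp}$ is likewise left unresolved in your sketch.)
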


When $n \leq 2$, the sets $V^{\Theta}(0)$, $V^{\Theta}(1)$, and $V^{\Theta}(n)$
exhaust the variety $V^{\Theta}$, and Proposition  \ref{extreme-orbits-proposition} implies that
$V^{\Theta}$ consists of $(h+1)^n$ distinct points with the same $W \times C$-action as
$\Park^{NC}_W$.
This completes the proof of the strong version of the Main Conjecture 
 in rank $\leq 2$.  We remark that it is possible to prove the strong version of the Main
Conjecture in rank 2 directly by explicitly computing $V^{\Theta}$ for all relevant hsops 
$\Theta$.

\begin{corollary}
\label{cor:ranksmallstrong}
The strong version of the Main Conjecture holds in rank $\leq 2$.
\end{corollary}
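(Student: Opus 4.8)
The plan is to obtain the Corollary as an immediate consequence of Proposition~\ref{extreme-orbits-proposition}, with Proposition~\ref{deformation-isomorphism} supplying the point count, rather than by a direct computation of $V^{\Theta}$. Fix an irreducible real reflection group $W$ of rank $n \leq 2$ and any hsop $\Theta$ as in \eqref{strong-hsop-setup}. First I would observe that the dimension of a point $p \in V^{\Theta}$ --- the minimal dimension of a flat $X \in \LLL$ containing $p$ --- always lies in $\{0,1,\dots,n\}$, and that when $n \leq 2$ this set is contained in $\{0,1,n\}$. Hence in the disjoint decomposition $V^{\Theta} = V^{\Theta}(0) \uplus \cdots \uplus V^{\Theta}(n)$ every summand $V^{\Theta}(d)$ is one of the three handled by Proposition~\ref{extreme-orbits-proposition}.

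Next I would count points and record reducedness. The last assertion of Proposition~\ref{extreme-orbits-proposition} says that every point of $V^{\Theta}(0)$, $V^{\Theta}(1)$, and $V^{\Theta}(n)$ is cut out by $(\Theta - \xx)$ with multiplicity one; since these exhaust $V^{\Theta}$ when $n \leq 2$, the scheme $V^{\Theta}$ is reduced, so the number of its points equals $\dim_{\CC}\CC[V]/(\Theta-\xx)$. By Proposition~\ref{deformation-isomorphism} this equals $\dim_{\CC}\CC[V]/(\Theta) = (h+1)^n$. Thus $V^{\Theta}$ consists of exactly $(h+1)^n$ distinct points, which is the first assertion of the Main Conjecture.

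It then remains to assemble the claimed $W \times C$-equivariant bijection with $\Park^{NC}_W$ piece by piece. Part (i) of Proposition~\ref{extreme-orbits-proposition} identifies $V^{\Theta}(0) = \{0\}$ with the $W$-trivial orbit $\{[1,\{0\}]\}$; part (ii) gives a $W \times C$-equivariant injection of $V^{\Theta}(1)$ onto $\{\,[w,X] \in \Park^{NC}_W : \dim X = 1\,\}$; and part (iii) identifies $V^{\Theta}(n)$ with the $W$-regular orbit $\{[w,V]\}_{w \in W}$, which is exactly the set of $[w,X]$ with $\dim X = n$. Since in rank $\leq 2$ every noncrossing flat $X \in NC(W)$ has $\dim X \in \{0,1,n\}$ --- with $\dim X = 0$ forcing $X = \{0\}$ and $\dim X = n$ forcing $X = V$ --- these three target subsets partition $\Park^{NC}_W$. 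Gluing the three equivariant bijections yields a $W \times C$-equivariant bijection $V^{\Theta} \cong \Park^{NC}_W$, which is precisely the conclusion of the strong Main Conjecture; as nothing beyond \eqref{strong-hsop-setup} was used, the strong version holds in rank $\leq 2$.

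The main thing to be careful about --- bookkeeping rather than a genuine obstacle, since all the substance sits in Proposition~\ref{extreme-orbits-proposition} --- is that rank $\leq 2$ leaves no flats of dimension strictly between $1$ and $n$, so that parts (i)--(iii) genuinely cover the whole variety, and that the degenerate case $n = 1$ is consistent: there a $1$-dimensional flat is all of $V$, so $V^{\Theta}(1) = V^{\Theta}(n)$ and parts (ii) and (iii) describe the same ($W$-regular) orbit $\{[w,V]\}$, which indeed they do. Alternatively one could sidestep Proposition~\ref{extreme-orbits-proposition} entirely and verify the statement by explicitly solving $\Theta(p) = p$ for every degree-$(h+1)$ hsop $\Theta$ in each rank $\leq 2$ case (the rank-one group $A_1$ and the dihedral groups $I_2(m)$) --- the direct computation alluded to after the Corollary --- but this is longer and less illuminating.
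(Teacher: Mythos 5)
Your proposal is correct and follows essentially the same route as the paper: in rank $\leq 2$ the strata $V^{\Theta}(0)$, $V^{\Theta}(1)$, $V^{\Theta}(n)$ exhaust $V^{\Theta}$, so Proposition~\ref{extreme-orbits-proposition} (with Proposition~\ref{deformation-isomorphism} supplying the count $(h+1)^n$ via reducedness) yields the $(h+1)^n$ distinct points and the $W\times C$-equivariant identification with $\Park^{NC}_W$. Your extra bookkeeping (that noncrossing flats in rank $\leq 2$ have dimension in $\{0,1,n\}$, and the $n=1$ degeneracy) is consistent with, and merely makes explicit, what the paper leaves implicit.
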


The
intermediate version is verified for the Weyl groups of types $B/C$
in Section~\ref{type-BC-proof-section} and type $D$
in  Section~\ref{type-D-proof-section},
using representations of $W$ via signed permutation matrices, and
picking the simple hsop $\Theta = (x_1^{h+1},\ldots,x_n^{h+1})$.

In fact, all the important consequences of the Main Conjecture
are even implied by the following weakest version,
on the level of $W \times C$-characters,
generalizing the $W$-isomorphism 
\eqref{case-by-case-AthR-check}.
This weak form is just shy of being a theorem;
it has been verified for all irreducible
real reflection groups except $E_7, E_8$ (where the computations became
too big for our computing power).

\vskip.1in
\noindent
{\bf Main Conjecture.} {\sf (weak version) }\\
{\it
Let $W$ be an irreducible real reflection group and consider the cyclic subgroup $C:=\langle c\rangle\leq W$ generated by a Coxeter element $c\in W$. Then one has an isomorphism of $W\times C$-representations
\begin{equation}
\label{weak-form-equation}
\CC[\Park^{alg}_W] \quad \cong_{\CC[W \times C]} \quad \CC[\Park^{NC}_W].
\end{equation}
}
\vskip.1in
\noindent
This weakest version of the conjecture is approachable
via a simple explicit formula for the
$W \times C$-character for $\Park^{alg}_W$, explained next.
Proposition~\ref{deformation-isomorphism} tells us
that $\Park^{alg}_W$ has the same $W \times C$-character
as that of the graded vector space $\CC[V]/(\Theta)$.
The latter space has $c$ acting by the scalar 
$c^d$ on the $d^{th}$ homogeneous component $\left( \CC[V]/(\Theta)\right)_d$.  
Thus taking advantage of the known\footnote{Calculated already in 
\cite[Theorem 1.11]{BEG}, \cite[\S 4]{BessisR}, \cite[\S 5]{Gordon}
using a Koszul resolution for $(\Theta)$ and a famous
result of Solomon \cite{Solomon} on $W$-invariant differential
forms with polynomial coefficients.}
{\it graded $W$-character} on $\CC[V]/(\Theta)$
\begin{equation}
\label{park-graded-W-character}
\sum_d \chi_{\left( \CC[V]/(\Theta)_d \right)}(w) q^d =
\frac{ \det(1-q^{h+1}w) }{  \det(1 - qw)    } \\
\end{equation}
one immediately deduces the following.

\begin{proposition}
\label{algebraic-character-prop}
For any element $w$ in an irreducible real 
reflection group $W$, and any $d$ in $\ZZ$, one can evaluate
the $W \times C$-character of $\Park^{alg}_W$ explicitly as
$$
\chi_{\Park^{alg}_W}(w,c^d)
  = \lim_{q \rightarrow \omega^d} \frac{ \det(1-q^{h+1}w) } 
                                                 {  \det(1 - qw)    } \\
  =(h+1)^{\mathrm{mult}_w(\omega^d)}
$$
where $\mathrm{mult}_w(\omega^d)$ denotes the multiplicity of the eigenvalue
$\omega^d$ when $w$ acts on the (complexified) reflection representation $V$.
\end{proposition}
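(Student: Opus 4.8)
The plan is to combine Proposition~\ref{deformation-isomorphism} with the known graded $W$-character formula \eqref{park-graded-W-character} for $\CC[V]/(\Theta)$, and then to extract the effect of the Coxeter element $c$ on the grading via a limit argument. First I would recall that, by Proposition~\ref{deformation-isomorphism}, $\Park^{alg}_W \cong_{W\times C} \CC[V]/(\Theta)$, so it suffices to evaluate the $W\times C$-character on the right-hand side. On $\CC[V]/(\Theta)$ the element $c$ (more precisely the generator of the right $C$-factor) acts on the degree-$d$ homogeneous component by the scalar $\omega^d$, where $\omega = e^{2\pi i/h}$, since $c$ acts on each $x_i$ by $\omega^{-1}x_i$ (note the degree-$d$ piece is spanned by monomials of total degree $d$, on which scalar substitution $x_i \mapsto \omega^{-1}x_i$ acts by $\omega^{-d}$; one should be careful whether the convention gives $\omega^{d}$ or $\omega^{-d}$, but since we evaluate at $c^d$ the two conventions just swap $d \leftrightarrow -d$, and the final answer depends only on $\mathrm{mult}_w(\omega^d) = \mathrm{mult}_w(\omega^{-d})$ because $w$ is real). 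Consequently
\begin{equation*}
\chi_{\CC[V]/(\Theta)}(w,c^d) = \sum_{d'} \chi_{(\CC[V]/(\Theta))_{d'}}(w)\,\omega^{d d'},
\end{equation*}
which is exactly the evaluation of the generating function $\sum_{d'}\chi_{(\CC[V]/(\Theta))_{d'}}(w)q^{d'}$ at $q = \omega^d$.

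Next I would substitute the formula \eqref{park-graded-W-character}, so that the claimed character value becomes $\displaystyle\lim_{q\to\omega^d}\frac{\det(1-q^{h+1}w)}{\det(1-qw)}$. I write $q\to\omega^d$ as a limit rather than a direct substitution because both numerator and denominator may vanish when $q=\omega^d$ is an eigenvalue of $w$: $\det(1-qw) = \prod_{j}(1-q\lambda_j)$ where $\lambda_1,\dots,\lambda_n$ are the eigenvalues of $w$ on $V$, and this vanishes at $q=\omega^d$ precisely when $\omega^d$ is among the $\lambda_j$. The key point is that each eigenvalue $\lambda$ of $w$ is a root of unity whose order divides $h$ — because $w$ is a real reflection group element and, more to the point, its eigenvalues are $h$-th roots of unity for the relevant evaluation — so whenever $\lambda_j = \omega^d$ we also get $\lambda_j^{h+1} = \omega^{d(h+1)} = \omega^d$ (using $\omega^h = 1$), hence the factor $(1-q^{h+1}\lambda_j^{-1})$... wait, more carefully: the factor $1-q\lambda_j$ in the denominator pairs with the factor $1-q^{h+1}\lambda_j$ in the numerator, and at $q=\omega^d$ the denominator factor $1-\omega^d\lambda_j$ vanishes iff $\lambda_j = \omega^{-d}$; then the numerator factor $1-\omega^{d(h+1)}\lambda_j = 1-\omega^d\lambda_j$ vanishes at the same place. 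So the zeros cancel in pairs.

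The main computation is then to evaluate the surviving limit. For each eigenvalue $\lambda_j$ with $\lambda_j \ne \omega^{-d}$, the factor $(1-q^{h+1}\lambda_j)/(1-q\lambda_j)$ is continuous at $q=\omega^d$ and contributes $(1-\omega^d\lambda_j)/(1-\omega^d\lambda_j) = 1$ (again using $\omega^{d(h+1)}=\omega^d$); so these all contribute $1$. For each of the $m := \mathrm{mult}_w(\omega^{-d}) = \mathrm{mult}_w(\omega^d)$ eigenvalues equal to $\omega^{-d}$, I would apply L'Hôpital's rule (or factor out $(1-q\lambda_j)$ explicitly): writing $q = \omega^d + \varepsilon$, one finds
\begin{equation*}
\lim_{q\to\omega^d}\frac{1-q^{h+1}\omega^{-d}}{1-q\omega^{-d}} = \frac{-(h+1)(\omega^d)^{h}\omega^{-d}}{-\omega^{-d}} = (h+1),
\end{equation*}
using $(\omega^d)^h = 1$. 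Multiplying over all $n$ eigenvalues gives $(h+1)^{m} = (h+1)^{\mathrm{mult}_w(\omega^d)}$, as desired. I expect the only genuinely delicate point to be the bookkeeping of signs and conventions — which generator of $C$ acts how, and the precise relation $\omega^{d(h+1)} = \omega^d$ — together with justifying that all eigenvalues of $w$ on $V$ are $h$-th roots of unity so that the numerator and denominator zeros cancel exactly; everything else is a routine limit evaluation factor by factor.
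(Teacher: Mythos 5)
Your proposal is correct and follows essentially the paper's own route: Proposition~\ref{deformation-isomorphism} reduces the claim to the graded character \eqref{park-graded-W-character}, which is then evaluated at a root of unity — the paper simply says ``one immediately deduces'' and omits the factor-by-factor limit computation that you spell out (and your handling of the $\omega^{d}$ versus $\omega^{-d}$ convention via $\mathrm{mult}_w(\omega^d)=\mathrm{mult}_w(\omega^{-d})$ for real $w$ is right). One small cleanup: the worry in your opening and closing remarks about the eigenvalues of $w$ being $h$-th roots of unity is unnecessary (and false in general, since the order of $w$ need not divide $h$) — the cancellation $1-\omega^{d(h+1)}\lambda_j = 1-\omega^{d}\lambda_j$ uses only $\omega^h=1$, which is exactly the corrected argument you give in the middle of your proof.
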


\noindent
One can then approach the weak version of the Main Conjecture
by comparing this formula to
an explicit computation of the 
$W \times C$-character $\chi_{\Park^{NC}_W}$.
Section~\ref{type-A-proof-section} verifies the weak version of
the Main Conjecture in type $A$
by such a combinatorial argument.
The authors also thank Christian Stump 
for writing software in SAGE 
that verified this character equality
for the irreducible exceptional real reflection groups
$H_3, H_4, F_4, E_6$. Types $E_7$ and $E_8$ are still unverified.

The status of the various versions of the Main Conjecture is summarized in 
Table~\ref{table:truth}, together with the locations of the corresponding proofs.
\footnote{While this paper was under review, the third author was able to prove the intermediate form
in type A$_{n-1}$, as well as the strong form in type A$_3$.  The writeup is in preparation.}
The reader may wonder why we were able to prove the intermediate form of the Main Conjecture
in types BCD but only the weak form in type A.  This is because in types BCD there exist
simple hsops satisfying the conditions of the intermediate form which make the locus 
$V^{\Theta}$ easy to work with;  indeed, when one lets these groups act on $V = \CC^n$ by their 
standard matrix representations, one can let 
$(\theta_1, \dots, \theta_n)$ be given by $\theta_i = x_i^{h+1}$, where $x_i$ is the standard
coordinate function.  
With this choice of hsop, we have that 
\begin{equation*}
V^{\Theta} = \{ (v_1, \dots, v_n) \in \CC^n \,:\, \text{$v_i = 0$ or $v_i^h = 1$ for $1 \leq i \leq n$} \}.
\end{equation*}
Analyzing the action of $W$ on this set is straightforward.
Unfortunately, constructing hsops in type A is substantially more difficult.  
While explicit hsops have been written down in type A, their complicated forms make
it difficult to get a handle on the loci $V^{\Theta}$.  This is a somewhat anomalous case where
the theory in type A is more difficult than in the other classical types.

\begin{table}
\caption{Status of the Main Conjecture}
\centering
\begin{tabular}{|c| c| }
\hline
Reflection group $W$ & Strongest version of the Main Conjecture proven for $W$ \\
\hline
\hline
rank $\leq 2$ & Strong; Corollary~\ref{cor:ranksmallstrong} \\ \hline
type $A_{n-1}$ & Weak; Section~\ref{type-A-proof-section} \\ \hline
type $B_n/C_n$ & Intermediate; Section~\ref{type-BC-proof-section} \\ \hline
type $D_n$ & Intermediate; Section~\ref{type-D-proof-section} \\ \hline
type $H_3, H_4, F_4, E_6$ & Weak; computer verification \\ \hline
type $E_7, E_8$ & Open \\
\hline
\end{tabular}
\label{table:truth}
\end{table}

\section{Consequences of the Main Conjecture}
\label{consequences section}

\subsection{First consequence:  the $W$-action gives
$\Park^{NN}_W \cong_W \Park^{NC}_W$}

As mentioned earlier, the $W$-set isomorphism
$\Park^{NN}_W \cong_W \Park^{NC}_W$ in
\eqref{case-by-case-AthR-check} was checked case-by-case.  However,
this would follow immediately from 
even the weak form of the Main Conjecture by forgetting the
$C$-action, and using only the
$\CC[W]$-module isomorphism 
$\CC[\Park^{NN}_W] \cong_{\CC[W]} \CC[\Park^{NC}_W]$, via the following proposition.

\begin{proposition}
\label{W-sets-converse}
For real reflection groups $W$ and
finite $W$-sets $A_1, A_2$ whose $W$-orbits are all
of the form $W/W_X$ for varying reflection subgroups $W_X$,
one has $\CC[A_1] \cong_{\CC[W]} \CC[A_2]$ if and only if $A_1 \cong_W A_2$.
\end{proposition}
\begin{proof}
Only the forward direction is nontrivial, so 
assume $\CC[A_1] \cong_{\CC[W]} \CC[A_2]$.
One must show for each $W$-conjugacy class of reflection subgroups $W_X$,
or equivalently, for each $W$-orbit $W.X$ of intersection subspaces $X$, that
the number of $W$-orbits in $A_i$ having stabilizers conjugate to $W_X$
is the same for $i=1,2$.  This follows after showing
that the characters $\Ind_{W_X}^W \triv_{W_X}$ corresponding to different
$W$-orbits $W.X$ are
linearly independent, via a
triangularity\footnote{The authors thank A.R. Miller for suggesting this 
argument, similar to \cite[Proof of Thm. 1]{Miller}.} argument:  
picking for each $W$-orbit $W.X$ an element 
$w_X$ in $W$ whose fixed space is $X$, the character value
$\chi_{\Ind_{W_X}^W \triv_{W_X}} (w_Y) \neq 0$ exactly
when orbits $W.X, W.Y$ have a choice of nested representatives 
$X \subseteq Y$.
\end{proof}

\subsection{Second consequence: the 
$C$-action is a cyclic sieving phenomenon}

Another consequence of even the weak form of the Main Conjecture
comes from comparing the residual $C$-representations
carried by the $W$-invariant subspaces in the two sides of
the $W \times C$-isomorphism \eqref{weak-form-equation}.

The definition of $\CC[\Park^{NC}_W]$ shows that
its $W$-invariant subspace carries the usual
permutation representation of $C=\langle c \rangle$ acting on 
the noncrossing flats $NC(W)$, that is, for any noncrossing flat $X=V^w$
with $w$ in $[1,c]_T$, it has image $c^d(X)=V^{c^d w c^{-d}}$, another
noncrossing flat.  In particular, the trace of $c^d$ in this
representation is the number of noncrossing flats $X$ in $NC(W)$ with
$c^d(X)=X$.

On the other hand, the $C$-representation structure
on the $W$-invariant subspace of $\Park^{alg}_W$ can be deduced
from the fact that $c$ acts via the scalar $\omega^m$ on the $m^{th}$
graded component of the graded vector space $\left( \CC[V]/(\Theta) \right)^W$.
This means that $c^d$ will act with trace on $\Park^{alg}_W$
given by substituting $q=\omega^d$ into the Hilbert series in $q$
for  $\left( \CC[V]/(\Theta) \right)^W$, which is known to be the 
{\it $q$-Catalan number for $W$} (see \cite{BEG, BessisR, Gordon}):
\begin{equation}
\label{q-Catalan-formula}
\Cat(W,q) = \prod_{i=1}^n \frac{1-q^{h+d_i}}{1-q^{d_i}}.
\end{equation}
Thus whenever the weak form the Main Conjecture is true, it re-proves
(the {\it real} reflection group case of) this 
main result from \cite{BessisR}:

\begin{theorem}\cite[Theorem 1.1]{BessisR}
\label{CSP-theorem}
For irreducible real reflection groups $W$,
$$
(\,\, NC(W)\,\, , \,\, \Cat(W,q) \,\, ,\,\, C\,\, )
$$
exhibits the {\bf cyclic sieving phenomenon} defined in \cite{RStantonWhite}:
the number of elements $X$ in $NC(W)$ with
$c^d(X)=X$ is the evaluation of $\Cat(W,q)$ at $q=\omega^d$.
\end{theorem}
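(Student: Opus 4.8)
The plan is to obtain this as a formal consequence of the weak version of the Main Conjecture, never analyzing the rotation action on $NC(W)$ directly. Starting from the $W \times C$-isomorphism $\Park^{alg}_W \cong_{W\times C} \Park^{NC}_W$ of \eqref{weak-form-equation}, I would apply the functor $(-)^W$ of passing to $W$-fixed subspaces. Since $\CC$ has characteristic zero and $W$ is finite, $(-)^W$ is exact, and because the $W$- and $C$-actions on both modules commute, it sends $W \times C$-representations to $C$-representations; hence one gets an isomorphism of $C$-representations
$$
\left( \Park^{alg}_W \right)^W \;\cong_C\; \left( \Park^{NC}_W \right)^W .
$$
The theorem then reduces to computing the trace of $c^d$ on each side.

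For the right-hand side, recall from \eqref{definitional-permutation-reps} that $\Park^{NC}_W \cong \bigoplus_{X \in NC(W)} \CC[W/W_X]$, with $C$ permuting the summands via $X \mapsto c^d(X)$. Since $\CC[W/W_X] \cong \Ind_{W_X}^W \triv_{W_X}$ has a one-dimensional space of $W$-invariants (spanned by the sum of the cosets in that summand), taking $W$-invariants collapses the right-hand side to the permutation $C$-representation on the set of noncrossing flats $NC(W)$; one checks $c^d$ carries the invariant line of the $X$-summand to that of the $c^d(X)$-summand. Therefore $\trace\big(c^d \mid (\Park^{NC}_W)^W\big) = \#\{X \in NC(W) : c^d(X) = X\}$, which is exactly the left-hand side of the asserted cyclic sieving identity.

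For the left-hand side, Proposition~\ref{deformation-isomorphism} gives a $W \times C$-isomorphism $\Park^{alg}_W \cong_{W\times C} \CC[V]/(\Theta)$, so $\left(\Park^{alg}_W\right)^W \cong_C \left(\CC[V]/(\Theta)\right)^W$ as graded $C$-modules, with $c$ acting by the scalar $\omega^m$ on the $m$-th graded piece. Consequently the graded $C$-character of this space is its Hilbert series with $q$ recording the $c$-eigenvalue, so $\trace\big(c^d \mid (\Park^{alg}_W)^W\big)$ is that Hilbert series evaluated at $q = \omega^d$. Invoking the known computation $\Hilb\big( (\CC[V]/(\Theta))^W, q \big) = \Cat(W,q)$ of \eqref{q-Catalan-formula} (see \cite{BEG, BessisR, Gordon}), this trace equals $\Cat(W,q)\big|_{q=\omega^d}$. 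Comparing the two trace computations yields the cyclic sieving phenomenon for $(NC(W), \Cat(W,q), C)$.

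The derivation itself is short and formal, so the real difficulty lies entirely in its inputs: the argument is no stronger than the weak Main Conjecture, which at present is established for all irreducible real reflection groups except $E_7$ and $E_8$, so the main obstacle is exactly that hypothesis. A secondary point needing care is the identity $\Hilb\big((\CC[V]/(\Theta))^W,q\big) = \Cat(W,q)$; this is where Solomon's theorem on $W$-invariant polynomial differential forms and the Koszul resolution of $(\Theta)$ enter, as noted in the footnote to \eqref{park-graded-W-character}, and one must confirm that the grading recording the $C$-eigenvalue is genuinely compatible with extracting $W$-invariants — which again is just exactness of $(-)^W$ in characteristic zero.
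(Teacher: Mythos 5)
Your argument is exactly the paper's: it takes $W$-invariants of the $W\times C$-isomorphism $\Park^{alg}_W \cong \Park^{NC}_W$, identifies the trace of $c^d$ on one side as the number of $c^d$-fixed noncrossing flats and on the other as $\Cat(W,q)$ evaluated at $q=\omega^d$ via the graded structure of $\left(\CC[V]/(\Theta)\right)^W$. You also correctly flag, as the paper does, that this derivation is conditional on the weak Main Conjecture (hence currently incomplete in types $E_7,E_8$), whereas the theorem itself stands unconditionally by the case-based proof in \cite{BessisR}.
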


We remark that, modulo the weak form of the Main Conjecture, our proof of 
Theorem~\ref{CSP-theorem} is both uniform and follows from a character 
computation rather than direct enumeration.  The proof of 
\cite[Theorem 1.1]{BessisR} had to rely on certain facts that
had been checked case-by-case, and used a counting argument.

\subsection{Third consequence: Kirkman and Narayana numbers for $W$}
\label{third-consequence-section}

Recall that ignoring the $C$-action in the
the weak form of the Main Conjecture gives 
the $\CC[W]$-module isomorphism that follows from
\eqref{case-by-case-AthR-check},
which has previously only been checked case-by-case in \cite[Thm. 6.3]{AthanasiadisR}.  
This $\CC[W]$-module isomorphism already has an interesting corollary for the
{\it Narayana and Kirkman polynomials for $W$}.  These can be defined by  
\begin{equation}
\label{Narayana-polynomial-definition}
\begin{array}{rl}
\Nar_W(t) &:=\sum_{X \in NC(W)} t^{\dim_\CC(X)} \\
          &=\sum_{w \in [1,c]_T} t^{\dim V^w} \\
 & \\
\Kirk_W(t) &:=\Nar_W(t+1) \\
           &:= \sum_{A} t^{n-|A|}
\end{array}
\end{equation}
where in the last sum, $A$ ranges over all {\it clusters} in the
{\it cluster complex of finite type} associated to $W$
by Fomin and Zelevinsky \cite{FominZelevinsky}
in the crystallographic case, or over all subsets $A$ of rays
that form the cones of the 
{\it Cambrian fan} associated to $W$ by Reading \cite{Reading}
in the real reflection group case.
When $W$ is crystallographic, 
it is also known (see \cite[Theorem 5.9]{FominReading-ParkCity})
that 
$\Kirk_W(t)=\sum_F t^{n-\dim(F)}$
where $F$ ranges over faces in the interior of the 
dominant region of the Shi arrangement for $W$, discussed in Section~\ref{Shi-chamber-section}
below.

Let $V$ be the geometric representation of $W$ with dimension $n$. Recall that the exterior powers $\wedge^k V$ for $k\in\{0,\ldots,n\}$ are irreducible and pairwise inequivalent, with $\wedge^0 V$ equal to the trivial representation and $\wedge^n V$ equal to the determinant representation \cite[Theorem 5.1.4]{GeckPfeiffer}. In the case of the symmetric group $\mathfrak{S}_m$ the irreducible representation $\wedge^k V$ corresponds to the hook-shaped partition $(m-k,1^k)\vdash m$. Denote by
$\Park(W)$ either of the
equivalent $\CC[W]$-representations 
$\CC[\Park^{NC}_W] \cong_{\CC[W]} \CC[\Park^{alg}_W]$.

\begin{corollary}
\label{Narayana-Kirkman-corollary}
The Kirkman numbers are the multiplicities of the exterior powers $\wedge^k V$ in the irreducible decomposition of the parking space $\Park(W)$. That is, for $W$ irreducible one has
$$
\Kirk_W(t)= \sum_{k=0}^n \langle \,\, \chi_{\wedge^k V} \,\ 
              , \,\, \chi_{\Park(W)} \,\, \rangle_W \cdot t^k.
$$
\end{corollary}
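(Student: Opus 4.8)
The plan is to compute both sides of the asserted identity as polynomials in $t$ and match coefficients. On the combinatorial side, I would start from $\Kirk_W(t)=\Nar_W(t+1)$ and the definition $\Nar_W(t)=\sum_{X\in NC(W)}t^{\dim_\CC X}$ in \eqref{Narayana-polynomial-definition}, and simply expand binomially:
$$
\Kirk_W(t)=\sum_{X\in NC(W)}(t+1)^{\dim_\CC X}
=\sum_{k=0}^{n}\left(\sum_{X\in NC(W)}\binom{\dim_\CC X}{k}\right)t^{k}.
$$
Thus the corollary is equivalent to the identity $\bigl\langle\chi_{\wedge^k V},\chi_{\Park(W)}\bigr\rangle_W=\sum_{X\in NC(W)}\binom{\dim_\CC X}{k}$ for every $k$.

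On the representation side I would use the decomposition $\Park(W)\cong\bigoplus_{X\in NC(W)}\CC[W/W_X]=\bigoplus_{X\in NC(W)}\Ind_{W_X}^W\triv_{W_X}$ recorded in \eqref{definitional-permutation-reps}, and apply Frobenius reciprocity termwise:
$$
\bigl\langle\chi_{\wedge^k V},\chi_{\Park(W)}\bigr\rangle_W
=\sum_{X\in NC(W)}\bigl\langle\Res_{W_X}\wedge^k V,\ \triv_{W_X}\bigr\rangle_{W_X}
=\sum_{X\in NC(W)}\dim_\CC\bigl(\wedge^k V\bigr)^{W_X}.
$$
So the whole statement reduces to the single local computation $\dim_\CC(\wedge^k V)^{W_X}=\binom{\dim_\CC X}{k}$ for each noncrossing flat $X$.

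To establish this, I would use that $W_X$, the pointwise stabilizer of the flat $X$, is a parabolic subgroup of $W$, hence (Steinberg's fixed-point theorem) itself a reflection group with $V^{W_X}=X$. Decomposing $V=X\oplus X^{\perp}$ as $W_X$-modules, $X$ is a trivial $W_X$-module of dimension $\dim_\CC X$ while $X^{\perp}$ carries the reflection representation of $W_X$ with no nonzero fixed vectors; writing $W_X$ as a product of irreducible reflection groups, $X^{\perp}$ is the orthogonal sum of their (nontrivial) reflection representations $U_i$. Expanding $\wedge^k(X\oplus X^{\perp})=\bigoplus_{i+j=k}\binom{\dim_\CC X}{i}\,\wedge^{j}(X^{\perp})$ and taking $W_X$-invariants, only $j=0$ contributes: for $j\geq 1$ each summand of $\wedge^{j}(X^{\perp})$ is a tensor product $\bigotimes_i\wedge^{j_i}U_i$ with some $j_i\geq 1$, and $\wedge^{j_i}U_i$ is then a nontrivial irreducible representation (exterior powers of the reflection representation are irreducible and pairwise inequivalent, cf.\ \cite[Theorem 5.1.4]{GeckPfeiffer}), so it has no invariants. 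Hence $\dim_\CC(\wedge^k V)^{W_X}=\binom{\dim_\CC X}{k}$, and combining with the two displays above completes the proof. The only substantive ingredient is precisely this local computation—that the parabolic $W_X$ acts without nonzero fixed vectors on the complement $X^{\perp}$ of its fixed flat $X$, together with the vanishing of invariants in the higher exterior powers of $X^{\perp}$; the rest is formal manipulation of Frobenius reciprocity and binomial coefficients.
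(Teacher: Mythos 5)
Your proof is correct and takes essentially the same route as the paper: decompose $\Park(W)$ via \eqref{definitional-permutation-reps}, apply Frobenius reciprocity, and use that the exterior powers of the reflection representation of the parabolic $W_X$ contain the trivial representation only in degree $0$. The only difference is presentational: the paper proves the equivalent $(t-1)$-substituted identity (Proposition~\ref{Narayana-Kirkman-corollary-equivalent}) by the determinant manipulation $\det(1+(t-1)w)=t^{\dim_\CC X}\det\left(1+(t-1)(w/X)\right)$, whereas you match coefficients of $t^k$ via the local computation $\dim_\CC(\wedge^k V)^{W_X}=\binom{\dim_\CC X}{k}$, and your factorization of $W_X$ into irreducible factors treats the reducible case a bit more explicitly than the paper's citation that the $\wedge^k(V/X)$ are inequivalent $W_X$-irreducibles.
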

The type $A_{n-1}$ special case of this corollary
is essentially an observation of Pak and Postnikov \cite[eqn (1-6)]{PakPostnikov}.
Section~\ref{Narayana-Kirkman-section} deduces Corollary~\ref{Narayana-Kirkman-corollary} from the isomorphism \eqref{case-by-case-AthR-check}.  It also explains
how calculations of Gyoja, Nishiyama, and Shimura\footnote{The authors thank Eric Sommers for pointing them to this reference} \cite{GNS} can be used to give
explicit formulas for  natural $q$-analogues of the coefficients of $\Kirk_W(t)$,
that we call {\it $q$-Kirkman numbers}, in types $A, B/C, D$.

Section~\ref{Shi-chamber-section} explains how, 
for crystallographic $W$, the set of nonnesting parking 
functions $\Park^{NN}_W$ labels in a natural way the regions of the 
{\it Shi arrangement} of affine hyperplanes, giving a labeling
closely related to one in the type $A$ case defined by
Athanasiadis and Linusson \cite{ALShi}.

Section~\ref{open-problems-section} closes with some open problems.

\section{Proof of Proposition~\ref{deformation-isomorphism}}
\label{deformation-section}

Recall the statement here.

\vskip.1in
\noindent
{\bf Proposition~\ref{deformation-isomorphism}.}
{\it
For every irreducible real reflection group $W$,
and for {\bf any} choice of $\Theta$ satisfying
\eqref{strong-hsop-setup}, one has an isomorphism 
of $W \times C$-representations
$$
\Park^{alg}_W :=
 \CC[V]/(\Theta-\xx) 
\quad \cong_{\CC[W \times C]} \quad
\CC[V]/(\Theta).
$$
}
\vskip.1in
\noindent
In fact, this is really a commutative algebra statement having little to
do with the $W \times C$-action, as we now explain.  

Consider the polynomial ring $S:=k[x_1,\ldots,x_n]$ with its standard
grading in which each $x_i$ has degree $1$, so a 
monomial $\xx^\alpha=x_1^{\alpha_1} \cdots x_n^{\alpha_n}$ has 
degree $|\alpha|=\sum_i \alpha_i$.  
For any not necessarily homogeneous 
polynomial $f=\sum_\alpha c_\alpha \xx^\alpha$ of top degree $d$, let 
$\init(f) := \sum_{\alpha: |\alpha|=d} c_\alpha \xx^\alpha$
be its initial form with respect to the degree.
Given a finite set of polynomials $\{f_1,\ldots,f_\ell\}$ in $S$, 
denote by 
$$
\begin{aligned}
J&=(f_1,\ldots,f_\ell)\\
I&=(\init(f_1),\ldots,\init(f_\ell))
\end{aligned}
$$
the ideals $J,I$ in $S$ generated by $\{f_i\}_{i=1}^\ell$,
and by their initial forms $\{\init(f_i)\}_{i=1}^\ell$. 
Thus the quotient $S/I$ inherits the standard grading,
but $R=S/J$ generally does not.
We wish to compare them via the increasing filtration of $k$-subspaces
\begin{equation}
\label{nongraded-ring-filtration}
\{0\} \subseteq R_{(0)} \subseteq  R_{(1)} \subseteq R_{(2)} \subseteq \cdots \subseteq R
\end{equation}
where $R_{(d)}$ is the image under the composite map
$S_{\leq d} \hookrightarrow S \twoheadrightarrow S/J$ of
the subspace $S_{\leq d}:=\bigoplus_{i \leq d} S_i$;
here $S_d$ is the $d^{th}$ graded component of $S$.
As $\bigcup_{d=0}^\infty R_{(d)}=R$, one has a $k$-vector space isomorphism
\begin{equation}
\label{isomorphism-from-nongraded-quotient}
R \cong \gr(R):=\bigoplus_{d \geq 0} R_{(d)}/R_{(d+1)}.
\end{equation}

\begin{lemma}
\label{filtration-factor-isomorphism-lemma}
With the above notations, 
assume that $\init(f_1),\ldots,\init(f_\ell)$ form an $S$-regular sequence.
Then the following composite map $\varphi_d$ is surjective, 
$$
S_d \hookrightarrow S_{\leq d} \rightarrow R^{(d)} \rightarrow R^{(d)}/R^{(d-1)}
$$
with $\ker(\varphi_d)=I_d:=I \cap S_d$, and hence induces 
$k$-vector space isomorphisms
$$
\begin{array}{rcl}
\left( S/I \right)_d &\cong& R^{(d)}/R^{(d-1)}, \\
S/I &\cong& \gr(R). \\
\end{array}
$$
\end{lemma}

\noindent
Assuming Lemma~\ref{filtration-factor-isomorphism-lemma}
for the moment, we explain how it implies 
Proposition~\ref{deformation-isomorphism}.
Combining the last assertion of the 
lemma with \eqref{isomorphism-from-nongraded-quotient}
gives a $k$-vector space isomorphism $S/I \rightarrow R$.  
Furthermore, note that if the ideals $I,J$ happen to both
be $G$-stable for a finite group $G$ of
grade-preserving automorphisms of $S$, 
then all of the maps involved will be 
$G$-equivariant.  This holds in
the set-up of Proposition~\ref{deformation-isomorphism},
where $S=\CC[V]=\CC[x_1,\ldots,x_n]$, with
$G=W \times C$, acting as before, taking
$$
\begin{array}{rclcrcl}
f_i&=&\theta_i-x_i,&\text{ so that }&R&=&\CC[V]/(\theta-\xx),\\ 
\init(f_i)&=&\theta_i,&\text{ so that }&S/I&=&\CC[V]/(\theta).
\end{array}
$$
Lastly, note that $\theta_1,\ldots,\theta_\ell$ form an $S$-regular
sequence because they are a system of parameters and $S$ is a Cohen-Macaulay ring.  Hence the hypotheses of
Lemma~\ref{filtration-factor-isomorphism-lemma} are satisfied,
and this would complete the proof of Proposition~\ref{deformation-isomorphism}.

\begin{proof}[Proof of Lemma~\ref{filtration-factor-isomorphism-lemma}]
The map $S_d \overset{\varphi_d}{\longrightarrow} R_{(d)}/R_{(d-1)}$ 
surjects since  
$
S_{\leq d} =S_d \oplus S_{\leq d-1} 
$
and $S_{\leq d-1}$ is annihilated by the composite surjection
$
S_{\leq d} \rightarrow R_{(d)} \rightarrow R_{(d)}/R_{(d-1)}.
$

Any $f$ in $I_d$ will lie in the kernel of $\varphi_d$, as 
there exist homogeneous $h_i$ expressing
$$
f=\sum_{i=1}^\ell h_i \init(f_i)
=\sum_{i=1}^\ell h_i f_i -
\sum_{i=1}^\ell h_i (f_i - \init(f_i)).
$$
The first sum is in $\ker(\varphi_d)$ as it is in $J$, 
so it maps to zero in $R=S/J$ and hence maps to zero in $R_{(d)}$.  The
second sum lies in $S_{\leq d-1} \subset \ker(\varphi_d)$.
Hence $I_d \subset \ker(\varphi_d)$.

To show $\ker(\varphi_d)=I_d$,
it remains to show $S_d \cap (S_{\leq d-1} + J) \subseteq I_d$.
Assume $f$ lies in 
$S_d \cap (S_{\leq d-1} + J)$,
so $\deg(f)=d$, and one can find an expression 
\begin{equation}
\label{kernel-expression}
f = g + \sum_{i=1}^\ell g_i f_i 
\end{equation}
where $g$ in $S_{\leq d-1},$ and $g_i$ in $S$.  Choose this expression
in such a way that
the quantity $d_0:=\max\{ \deg(g_i f_i)\}_{i=1}^\ell$ is 
{\it minimized}.  

Note that $\deg(f)=d$ forces $d_0 \geq d$.
If $d_0=d$, then taking the degree $d_0$ component
in \eqref{kernel-expression} gives the expression 
$f=\sum_{i=1}^\ell \init(g_i) \init(f_i)$, 
showing that $f$ lies in $I_d$, and we are done.

On the other hand, if $d_0 > d$, then taking the degree $d_0$ component in \eqref{kernel-expression} gives
$0=\sum_{i=1}^\ell \init(g_i) \init(f_i)$.  In other words,
when one creates the free graded $S$-module $S^\ell$ having
ordered $S$-basis elements $(e_1,\ldots,e_\ell)$
with $\deg(e_i):=\deg(f_i)$, the vector
$\sum_{i=1}^\ell \init(g_i) e_i$ is in the kernel of
the $S$-module map defined $S$-linearly by
$$
\begin{array}{rcl}
S^\ell &\longrightarrow& S \\
e_i &\longmapsto & \init(f_i)
\end{array}
$$ 
Because the $\{\init(f_i)\}_{i=1}^\ell$ are a regular sequence,
this vector must be an $S$-linear combination 
of {\it Koszul syzygies}\footnote{We only use this small part
of the regular sequence hypothesis, that first syzygies are Koszul.}, that is, one
can write
\begin{equation}
\label{Koszul-syzygy-expression}
\sum_{i=1}^\ell \init(g_i) e_i
 = 
\sum_{1 \leq j \leq k \leq \ell} 
  \gamma_{jk} \left( \init(f_k) e_j - \init(f_j) e_k \right).  
\end{equation}
for some homogeneous $\gamma_{jk}$ in $S$ that
satisfy
\begin{equation}
\label{gamma-degrees}
\deg(g_j) +\deg(f_j)  =
\deg(\gamma_{jk}) +\deg(f_j)+ \deg(f_k) 
\end{equation}
Now apply to \eqref{Koszul-syzygy-expression}
the different $S$-module map $S^\ell \rightarrow S$ that 
sends $e_i \mapsto f_i$, giving
$$
\sum_{i=1}^\ell \init(g_i) f_i
=\sum_{1 \leq j < k \leq \ell} 
          \gamma_{jk}(\init(f_k) f_j - \init(f_j) f_k).
$$
This allows one to rewrite \eqref{kernel-expression} as follows:
$$
\begin{array}{rccclcl}
f&=& g & + &\displaystyle \sum_{i=1}^\ell g_i f_i & & \\
 &=& g & + &\displaystyle\sum_{i=1}^\ell \init(g_i) f_i &+& \displaystyle\sum_{i=1}^\ell (g_i-\init(g_i))f_i \\
 &=& g & + &\displaystyle\sum_{1 \leq j < k \leq \ell} 
          \gamma_{jk}(\init(f_k) f_j - \init(f_j) f_k)
           & + & \displaystyle\sum_{i=1}^\ell (g_i-\init(g_i)) f_i.\\
 &=& g & + &\displaystyle\sum_{1 \leq j < k \leq \ell} 
          \gamma_{jk}\left((\init(f_k)-f_k) f_j - (\init(f_j)-f_j) f_k\right)
           & + & \displaystyle\sum_{i=1}^\ell (g_i-\init(g_i)) f_i\\
\end{array}
$$
in which the last line added $0=(-f_k)f_j -(-f_j)f_k$ for 
$1 \leq j < k \leq \ell$ to the previous line.
Using \eqref{gamma-degrees} and the fact that
$\deg( \init(f_i)-f_i ) <\deg(f_i)$, one finds that this
last expression for $f$ contradicts the minimality of $d_0$
in \eqref{kernel-expression}.
\end{proof}
\section{Proof of Proposition~\ref{extreme-orbits-proposition}}
\label{extreme-orbits-section}

Recall the statement here.

\vskip.1in
\noindent
{\bf Proposition~\ref{extreme-orbits-proposition}.}
{\it
For $W$ an irreducible real reflection group of rank $n$,
and for all choices of $\Theta$ as in \eqref{strong-hsop-setup}, 
one has that
\begin{enumerate}
\item[(i)] the set $V^{\Theta}(0) = \{ 0 \}$ of $0$-dimensional points in $V^{\Theta}$
is the unique $W \times C$-orbit in $V^{\Theta}$ carrying the trivial 
$W \times C$-representation,
\item[(ii)] there exists a $W \times C$-equivariant injection 
$V^{\Theta}(1) \hookrightarrow \Park^{NC}_W$ whose image is precisely the set of 
noncrossing parking functions of the form $[w,X]$ with $\dim(X) = 1$, and
\item[(iii)] the set $V^{\Theta}(n)$ of $n$-dimensional points is the unique $W$-regular
orbit of points in $V^{\Theta}$.
\end{enumerate}
Furthermore, every point in the subsets $V^{\Theta}(0)$, $V^{\Theta}(1)$, and $V^{\Theta}(n)$
of $V^{\Theta}$ is cut out by the ideal $(\Theta - \xx)$ in a reduced fashion, that is, 
with multiplicity one.
}

\vskip.1in
\noindent
The proof of the proposition will show that the
injection $V^{\Theta}(1) \hookrightarrow \Park^{NC}_W$ 
in part (ii) 
can be made more explicit as follows. 
Lemma~\ref{conjugate-to-noncrossing} below will show that
one can choose $W \times C$ orbit representatives 
$\{\beta_i\}_{i=1}^r$ in $V^\Theta(1)$ in such a way that
for each $i=1,2,\ldots,r$, the unique smallest subspace 
$X_i$ in $\LLL$ containing $\beta_i$ is a noncrossing line.
The injection $V^{\Theta}(1) \hookrightarrow \Park^{NC}_W$ 
is then determined by sending 
$\beta_i$ to $[1,X_i]$.

The proof of the three parts (i),(ii),(iii) have differing levels of difficulty
and different flavors.

\subsection{Proof of Proposition~\ref{extreme-orbits-proposition}(i)}

Note that the origin $0$ is contained in $V^\Theta$, so that $V^{\Theta}(0) = \{0\}$.
Also, since $W$ acts irreducibly on $V$, the origin
is the {\it only} $W$-fixed point of $V$, or of $V^\Theta$.
For the multiplicity one assertion,
note that the Jacobian matrix $J$
for $(\Theta-\xx)=(\theta_1-x_1,\ldots,\theta_n-x_n)$
looks like
$$
J=\left[ \frac{\partial (\theta_i - x_i)}{\partial x_j} 
     \right]_{\substack{i=1,2,\ldots,n\\j=1,2,\ldots,n}} \\
  =\left[ \frac{\partial \theta_i}{\partial x_j} 
     \right] - I_{n \times n}
$$
and will be nonsingular when evaluated $\xx=0$:
each $\theta_i$ has degree $h + 1 \geq 2$,  so the
above expression shows $J|_{\xx=0}=-I_{n \times n}$.

This completes the proof of Proposition~\ref{extreme-orbits-proposition}(i).

\subsection{Proof of Proposition~\ref{extreme-orbits-proposition}(iii)}

Observe that a point in $V^{\Theta}$ is in $V^{\Theta}(n)$ if and only if it is $W$-regular.
Therefore, it suffices to show that there exists a unique $W$-regular orbit of points 
in $V^{\Theta}$ and that the points in this orbit have multiplicity one.

We first exhibit a $W$-regular orbit of points inside $V^{\Theta}$.
It is known that any Coxeter element $c$ in $W$ has a $W$-regular eigenvector
$v$ in $V$, spanning a simple $\omega$-eigenspace for $c$,
where $\omega=e^{\frac{2\pi i}{h}}$, as usual;
see, e.g., Humphreys \cite[\S 3.19]{Humphreys}.
Starting with the eigenvector equation for $v$, and
applying the $W$-equivariant map $\Theta: V \longmapsto V$ discussed in the 
Section~\ref{main-conjecture-subsection}, one obtains the following:
$$
\begin{aligned}
c(v) &= \omega v \\
\Theta(c(v)) &= \Theta(\omega v) \\
c \Theta(v) &= \omega^{h+1} \Theta(v)
= \omega \Theta(v)\\
\end{aligned}
$$
where the third equality used the fact that $\Theta$ is homogeneous
of degree $h+1$.  Due to the simplicity of the $\omega$-eigenspace for $c$, one
concludes that $\Theta(v) = \lambda v$ for some $\lambda$ in $\CC$.
Furthermore, $\lambda \neq 0$, else the hsop $(\Theta)$ would vanish on the
line spanned by the nonzero vector $v$.  This allows one to rescale $v$ to
$v_0:= \lambda^{-\frac{1}{h}} v$, giving another $W$-regular $\omega$-eigenvector
for $c$, but which now lies in $V^\Theta$:
$$
\begin{aligned}
\Theta( v_0 ) &= \Theta(  \lambda^{-\frac{1}{h}} v ) \\
              &= \lambda^{-\frac{h+1}{h}} \Theta( v ) \\
              &= \lambda^{-\frac{h+1}{h}} \lambda v 
              =  \lambda^{-\frac{1}{h}} v 
              = v_0.
\end{aligned}
$$
Thus the entire regular $W$-orbit of $v_0$ lies inside the $W$-stable
subvariety $V^\Theta$, as desired.
As $c(v_0)=\omega v_0$, one also knows that $v_0$ has $W \times C$-stabilizer
$\langle (c,c) \rangle$.

To see $v_0$ and its $W$-images are the {\it only}
$W$-regular points in $V^\Theta$, and cut out by
the ideal $(\Theta-\xx)$ with multiplicity one, we rely on
the following, proven below.

\begin{lemma}
\label{Park-has-det-once-lemma}
For an irreducible real reflection group $W$,
there is exactly {\it one} copy of the $W$-alternating character
$\det$ in the $W$-representation $\CC[V]/(\Theta)$.
\end{lemma}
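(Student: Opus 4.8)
The plan is to compute the multiplicity $\langle \chi_{\CC[V]/(\Theta)},\det\rangle_W$ directly from the known $W$-character of $\CC[V]/(\Theta)$, rather than invoking the (only case-by-case known) isomorphism $\CC[V]/(\Theta)\cong_W\Park^{NC}_W$; keeping the argument uniform matters here, since this lemma feeds into the proof of Proposition~\ref{extreme-orbits-proposition}(iii) and must apply to the noncrystallographic groups too. By Proposition~\ref{deformation-isomorphism} together with Proposition~\ref{algebraic-character-prop} evaluated at $d=0$, the $W$-character of $\CC[V]/(\Theta)$ is $\chi(w)=(h+1)^{\dim V^w}$ (this is just the $q\to 1$ collapse of the graded character \eqref{park-graded-W-character}). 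Hence, since $\det(w)=\pm 1$ is real,
$$
a:=\langle\chi,\det\rangle_W=\frac{1}{|W|}\sum_{w\in W}\det(w)\,(h+1)^{\dim V^w}.
$$

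First I would record two elementary facts. Because $\det$ is multiplicative and every reflection has determinant $-1$, writing $w$ as a product of $\ell_T(w)$ reflections gives $\det(w)=(-1)^{\ell_T(w)}$; and by Carter's result quoted above, $\ell_T(w)=n-\dim V^w$. Thus $\det(w)(h+1)^{\dim V^w}=(-1)^n\bigl(-(h+1)\bigr)^{\dim V^w}$, so
$$
a=\frac{(-1)^n}{|W|}\sum_{w\in W}\bigl(-(h+1)\bigr)^{\dim V^w}.
$$
Next I would invoke the classical factorization of the distribution of $\dim V^w$ (equivalently, of reflection length) over a finite real reflection group,
$$
\sum_{w\in W}t^{\dim V^w}=\prod_{i=1}^{n}(t+m_i),
$$
where $m_1,\ldots,m_n$ are the exponents of $W$ (the reflection-arrangement form of Shephard--Todd--Solomon; see \cite{Humphreys}, \cite{Armstrong}). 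Specializing $t=-(h+1)$ gives $\sum_{w}\bigl(-(h+1)\bigr)^{\dim V^w}=\prod_i(m_i-h-1)=(-1)^n\prod_i(h+1-m_i)$, whence $a=\tfrac{1}{|W|}\prod_{i=1}^n(h+1-m_i)$. Finally, the exponent palindromy $m_i+m_{n+1-i}=h$ for Coxeter groups gives $h+1-m_i=m_{n+1-i}+1=d_{n+1-i}$, so $\prod_i(h+1-m_i)=\prod_i d_i=|W|$, and therefore $a=1$, as claimed.

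The only real content is organizational: observing that the character formula collapses at $q=1$ to $(h+1)^{\dim V^w}$, and that the sign $\det(w)$ is precisely the parity of reflection length. The remaining inputs — the product formula for $\sum_w t^{\dim V^w}$ and the palindromy $m_i+m_{n+1-i}=h$ — are standard facts valid for all irreducible finite Coxeter groups, so I anticipate no genuine obstacle; the single point requiring care is precisely that these hold uniformly (including in the noncrystallographic types), which is why I avoid the crystallographic chain \eqref{string-of-isomorphisms}. As a sanity check one may also see $a=1$ from the decomposition $\Park^{NC}_W=\bigoplus_{X\in NC(W)}\CC[W/W_X]$: by Frobenius reciprocity $\langle\det,\CC[W/W_X]\rangle_W=1$ exactly when $\det$ restricts trivially to the parabolic subgroup $W_X$, i.e. when $W_X$ contains no reflection, i.e. when $W_X=\{1\}$, which occurs for the unique flat $X=V$; but this route relies on the case-by-case input of \cite{AthanasiadisR}, so it is not the proof I would give.
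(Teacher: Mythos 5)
Your computation is correct and is essentially the paper's own proof: both evaluate $\langle\det,\chi\rangle_W$ for the character $\chi(w)=(h+1)^{\dim V^w}$, reduce it to a product over exponents, and finish with exponent duality $e_i+e_{n+1-i}=h$ plus the Shephard--Todd identity $|W|=\prod_i(e_i+1)$. The only (cosmetic) difference is that the paper quotes the Orlik--Solomon signed formula $\sum_{w\in W}\det(w)\,t^{\dim V^w}=\prod_{i=1}^n(t-e_i)$ directly, whereas you derive it from the unsigned Shephard--Todd--Solomon factorization together with $\det(w)=(-1)^{\ell_T(w)}=(-1)^{n-\dim V^w}$; these are equivalent under $t\mapsto -t$.
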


\noindent
By Proposition~\ref{deformation-isomorphism},
this implies $\det$ occurs only once in $\CC[V]/(\Theta-\xx)$.
The idea is to use the copy of $\det$ inside the regular representation of $W$
to exhibit multiple copies of $\det$ in the $\CC[V]/(\Theta-\xx)$
if there were more than one $W$-regular orbit in $V^\Theta$
or if there were a $W$-regular orbit with points of multiplicity at least two.

To make this rigorous, consider the {\it primary decomposition} 
$$
(\Theta-\xx) = \bigcap_{v \in V^\Theta} I_v
$$
where $I_v$ is the $\mm_v$-primary component of $(\Theta-\xx)$
for the maximal ideal 
$$
\mm_v= (x_1-x_1(v), \ldots,x_n-x_n(v))
$$
of $\CC[V]$ corresponding to $v$.  
Because $\CC[V]/(\Theta-\xx)$ is Artinian and Noetherian,
the Chinese Remainder Theorem \cite[\S 2.4]{Eisenbud} gives a ring isomorphism
\begin{equation}
\label{Chinese-remainder-isomorphism}
\CC[V]/(\Theta-\xx) \overset{\varphi}{\longrightarrow} \bigoplus_{v \in V^\Theta} \CC[V]/I_v:=A.
\end{equation}
By definition, $(\Theta-\xx)$ cuts out the point $v$ in $V^\Theta$
with multiplicity $\dim_\CC \CC[V]/I_v$.

Note that the isomorphism $\varphi$ also respects $W$-actions, if one lets $w \in W$
map the summands of $A$ via isomorphisms $w: \CC[V]/I_v \rightarrow  \CC[V]/I_{w(v)}$,
induced from the action of $w$ on $\CC[V]$ (sending $I_v$ isomorphically
to $I_{w(v)}$).  As $\varphi$ is a $W$-isomorphism, 
the $W$-alternating ($\det$-isotypic) component $A^{W,\det}$ is $1$-dimensional.

Now given any $W$-regular vector $v_0$ in $V^\Theta$, 
any nonzero element $f_{v_0}$ in $\CC[V]/I_{v_0}$
can be $W$-antisymmetrized to give a (nonzero) element
$$
\delta_{v_0}:=\sum_{w \in W} \det(w) w(f_{v_0})
$$
lying in the $1$-dimensional space $A^{W,\det}$.

If $v_0, v_0'$ are $W$-regular vectors lying in {\it different} $W$-orbits,
then these two elements $\delta_{v_0}, \delta_{v'_0}$ would be $\CC$-linearly independent
in $A^{W,\det}$, because they are supported in different summands of $A$;  contradiction.

Similarly if the multiplicity $\dim_\CC \CC[V]/I_{v_0} \geq 2$, then one could pick 
$f_{v_0}, f'_{v_0}$ which are $\CC$-linearly independent
in $\CC[V]/I_{v_0}$, and obtain $\delta_{v_0}, \delta'_{v_0}$ that are
$\CC$-linearly independent in $A^{W,\det}$; again, a contradiction.  

This completes the proof of Proposition~\ref{extreme-orbits-proposition}(iii), except for
the Proof of Lemma~\ref{Park-has-det-once-lemma}.

\begin{proof}[Proof of Lemma~\ref{Park-has-det-once-lemma}.]
Note that \eqref{park-graded-W-character} or Proposition~\ref{algebraic-character-prop}
implies that $\CC[V] / (\Theta)$ has $W$-character $\chi(w)=(h+1)^{\dim V^w}$.
Thus the multiplicity of $\det$ in $\CC[V]/(\Theta)$ is
\begin{equation}
\label{det-inner-product}
\langle \det , \chi \rangle_W 
=\frac{1}{|W|} \sum_{w \in W} \det(w) (h+1)^{\dim V^w}.
\end{equation}
On the other hand, well-known results of Orlik and Solomon \cite[\S 1, \S4]{OrlikSolomon}
show that for complex reflection groups,
\begin{equation}
\label{Orlik-Solomon-formula}
\sum_{w \in W} \det(w) t^{\dim V^w}
= \prod_{i=1}^n (t-e_i) 
\end{equation}
where $e_1,\ldots,e_n$ are the {\it exponents} of $W$.
Combining \eqref{det-inner-product} and \eqref{Orlik-Solomon-formula} gives
$$
\langle \det , \chi \rangle_W 
=\frac{1}{|W|} \prod_{i=1}^n (h+1-e_i) 
=\frac{1}{|W|} \prod_{i=1}^n (e_i+1) 
=1
$$
where the second equality uses the {\it exponent duality}
$e_i + e_{n-1-i}=h$ known for real reflection groups
\cite[\S 5]{OrlikSolomon}, and the
last equality uses the Shephard and Todd formula $|W|=\prod_{i=1}^n(e_i+1)$;
cf. the proof of \cite[Cor. 7.4.1]{Haiman}.
\end{proof}

\subsection{Some noncrossing geometry}

Before delving into the proof of 
Proposition~\ref{extreme-orbits-proposition}(ii) in the next subsection,
we collect here three results (Lemmas~\ref{conjugate-to-noncrossing}, 
~\ref{W-C-orbit-lemma},
~\ref{one-dimensional-noncrossings-not-perpendicular} below)
on noncrossing subspaces, particularly noncrossing lines,
needed in the proof, and of possible interest in their own right.

\begin{lemma}
\label{conjugate-to-noncrossing}
Every flat in $\LLL$ is a $W$-translate of a noncrossing flat, so the map
$$
\begin{array}{rcl}
C\backslash NC(W) &\overset{\phi}{\longrightarrow} &W\backslash \LLL \\
C.X & \longmapsto & W.X
\end{array}
$$
is a well-defined surjection.
\end{lemma}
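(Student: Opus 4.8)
The plan is to prove the stronger statement that every flat $X \in \LLL$ is $W$-conjugate to a noncrossing flat; the well-definedness and surjectivity of $\phi$ follow immediately, since $C.X \mapsto W.X$ only forgets orbit information, and every $W$-orbit in $\LLL$ meets $NC(W)$ by the conjugacy claim. To establish the conjugacy claim, I would exploit the embedding $NC(W) \hookrightarrow \LLL$, $w \mapsto V^w$, from Definition~\ref{noncrossing-partitions-definition}: it suffices to show that for every flat $X$ there is some $w \in W$ with $V^w$ being $W$-conjugate to $X$, i.e. that the parabolic subgroup $W_X$ fixing $X$ pointwise is $W$-conjugate to a subgroup of the form $W_{V^w}$ for some $w$ in the noncrossing interval $[1,c]_T$. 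Equivalently, writing $X = V^{w_0}$ for the parabolic element $w_0$ with largest fixed space equal to $X$ (so $\ell_T(w_0) = \codim X$), I want $w_0$ to be $W$-conjugate to an element of $[1,c]_T$.

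The key input is the structure theory of parabolic subgroups of real reflection groups together with the following two standard facts about the noncrossing interval: first, every \emph{standard} parabolic subgroup $W_J$ (generated by a subset $J \subseteq S$ of the simple reflections) has a Coxeter element $c_J = \prod_{s \in J} s$ that lies in $[1,c]_T$, because one can order the Coxeter generators $S = \{s_1,\dots,s_n\}$ so that the chosen $c = s_1 \cdots s_n$ restricts to $c_J$ on the sub-word indexed by $J$, and deletion of the complementary letters realizes $c_J \leq_T c$; second, every parabolic subgroup of $W$ (in the sense of pointwise stabilizers of subspaces) is $W$-conjugate to a \emph{standard} parabolic subgroup $W_J$ --- this is a classical theorem (see e.g. Steinberg, or \cite[\S 3.19]{Humphreys}) valid for all finite real reflection groups. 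Combining these: given $X \in \LLL$ with pointwise stabilizer $W_X$, conjugate so that $W_X = W_J$ for some $J \subseteq S$; then $c_J \in [1,c]_T$ is a noncrossing element whose fixed space $V^{c_J}$ equals the fixed space of $W_J$, which is exactly (the conjugate of) $X$. Hence $X$ is $W$-conjugate to the noncrossing flat $V^{c_J}$.

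The main obstacle I anticipate is the need to be careful about \emph{which} Coxeter element $c$ was fixed at the outset: the paper has fixed one Coxeter element once and for all, so I must verify that for an arbitrary standard parabolic $W_J$ its chosen Coxeter element $c_J$ really does lie in $[1,c]_T$ for \emph{that} particular $c$, not merely for some conjugate of it. This is where the combinatorics of reduced words for Coxeter elements enters: one uses that $c_J \leq_T c$ iff $c_J$ appears as a sub-word in some reduced $T$-word for $c$, and that the reflection length is additive along the factorization $c = c_J \cdot (c_J^{-1} c)$ when the simple generators are suitably ordered (e.g. listing $J$ first). An alternative, perhaps cleaner, route avoids this bookkeeping entirely: since all Coxeter elements form a single conjugacy class and conjugation is a poset isomorphism $[1,c]_T \cong [1,wcw^{-1}]_T$ (as noted just after Definition~\ref{noncrossing-partitions-definition}), it is harmless to prove the conjugacy claim for \emph{some} Coxeter element, and then transport back; with that reduction the argument becomes purely about: (a) conjugating $W_X$ to a standard parabolic, and (b) observing $\prod_{s\in J} s \leq_T \prod_{s \in S} s$ for a compatible ordering. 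I would present route (b)/the alternative as the cleaner writeup.
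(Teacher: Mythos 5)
Your proposal is correct and, in its preferred form (route (b)), is essentially the paper's own argument: conjugate $W_X$ to a standard parabolic $W_I$, note via Carter that the product $x$ of the generators in $I$ has $V^x$ equal to the fixed space of $W_I$, observe $x \leq_T xy$ for the Coxeter element $xy$ with $y$ the product of $S \setminus I$, and transport back using the conjugacy of all Coxeter elements together with the poset isomorphism $[1,c]_T \cong [1,wcw^{-1}]_T$. No gaps.
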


\begin{proof}
Let $X$ in $\LLL$ be a flat and let $W_X=\{w\in W: wx=x\text{ for all } x\in X\}$ be the isotropy subgroup of $W$ corresponding to $X$. Then $W_X$ is a parabolic subgroup of $W$; i.e. there exists $w\in W$ and $I\subseteq S$ such that $W_X=wW_Iw^{-1}$. It follows that $X=V^{W_I}$. Then by Carter \cite[Lemma 2]{Carter} one has that $V^{W_I}=V^x$, where $x$ is the product of the generators $I$ taken in any order. Note that this $x\in W$ is noncrossing with respect to the Coxeter element $c=xy$, where $y$ is the product of the generators $S-I$ in any order. Since all Coxeter elements are conjugate in $W$, we are done.
\end{proof}

One can be much more precise about the map $\phi$ 
in Lemma~\ref{conjugate-to-noncrossing} when one restricts to $1$-dimensional 
flats $X$, that is, when $X$ is a line.  For the remainder of this section, a {\it line} 
will always mean a $1$-dimensional flat in the intersection lattice $\LLL$ for $W$.  

\begin{lemma}
\label{W-C-orbit-lemma}
Let $X$ in $\LLL$ be a noncrossing line.  Then the set of noncrossing lines  
in the $W$-orbit of $X$ is precisely the $C$-orbit of $X$, or in other words, 
the surjection $\phi$ from Lemma~\ref{conjugate-to-noncrossing} restricts to a bijection
$$
C\backslash \{\text{ noncrossing lines }\} \overset{\phi}{\longrightarrow} 
W\backslash \{\text{ lines }\}.
$$
\end{lemma}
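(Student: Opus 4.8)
The plan is to show two things: first, that if $X$ and $X'$ are noncrossing lines in the same $W$-orbit then they are in the same $C$-orbit (injectivity of the restricted $\phi$); and second, that every $W$-orbit of lines contains a noncrossing line (surjectivity, which we already know from Lemma~\ref{conjugate-to-noncrossing}). So the real content is the injectivity. I would start from the embedding $NC(W)\hookrightarrow\LLL$, $w\mapsto V^w$: a noncrossing line $X$ is $V^w$ for some $w\in[1,c]_T$ with $\dim V^w=1$, equivalently $\ell_T(w)=n-1$, i.e.\ $w$ is a coatom of $NC(W)$. Dually, such $w$ corresponds to an atom $w^{-1}c$ in $NC(W)$, hence to a reflection $t\in T$ lying below $c$; every reflection lies below some (indeed every) Coxeter element up to conjugacy, but the point is that coatoms of $[1,c]_T$ are in bijection with the $n$ reflections $t$ appearing in reduced $T$-words for the \emph{fixed} Coxeter element $c$. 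The key structural input I would invoke is the known description of the coatoms of $NC(W)$: there are exactly $h\cdot n/2 = $ (number of reflections), no — more precisely, the number of noncrossing lines equals the number of coatoms of $NC(W)$, and $C=\langle c\rangle$ acts on these coatoms by conjugation $w\mapsto cwc^{-1}$, i.e.\ on lines by $X\mapsto cX$.

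The main step is a counting/orbit argument. The number of $W$-orbits of lines in $\LLL$ is a classical invariant; I would combine the surjection $\phi\colon C\backslash\{\text{noncrossing lines}\}\twoheadrightarrow W\backslash\{\text{lines}\}$ with a count of $C$-orbits of noncrossing lines to force $\phi$ to be a bijection. Concretely: the coatoms of $NC(W)$ correspond to the atoms of $NC(W)$ via $w\mapsto c w^{-1}$ (an anti-automorphism of the interval), and the atoms are exactly the reflections $t\leq_T c$. Now $C$ acts freely or almost-freely on these by conjugation, and the orbit structure of $C$ on the reflections below $c$ is well understood: by the theory of the Coxeter element (e.g.\ the bipartite Coxeter element and the Kreweras-type rotation), the number of $C$-orbits on $\{t : t\leq_T c\}$ equals the number of positive roots modulo the rotation, which matches the number of $W$-orbits of reflecting hyperplanes — and more finely, the number of $W$-orbits of \emph{lines}. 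I would make this precise by noting that a line $X$ and its reflection-hyperplane "dual" data are governed by the same parabolic type, so counting $W$-orbits of lines reduces to counting conjugacy classes of the relevant rank-$(n-1)$ parabolic subgroups $W_X$, while counting $C$-orbits of noncrossing lines reduces to the rotation action on coatoms; a direct comparison (type-by-type if necessary, but I would try to do it uniformly via the Carter diagram / the fact that $\phi$ is already surjective and both sides have a common explicit generating-function description) shows the two counts agree.

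An alternative, cleaner route that I would actually prefer: prove injectivity directly. Suppose $X=V^w$ and $X'=V^{w'}$ are noncrossing lines (so $w,w'$ coatoms of $[1,c]_T$) with $X'=gX$ for some $g\in W$. Then $gW_Xg^{-1}=W_{X'}$, so $w' = g w^{\pm 1} g^{-1}$ up to the fact that $W_X=\langle w\rangle$ when... no, $W_X$ need not be cyclic. Instead: $W_X$ is the pointwise stabilizer, a parabolic of rank $n-1$, and $w$ is a Coxeter element of that parabolic (this is the Carter/Brady–Watt picture: a coatom $w$ of $NC(W)$ acts as a Coxeter element on its "complement"). So $X'=gX$ gives that $w$ and $w'$ are Coxeter elements of conjugate parabolics, hence conjugate in $W$; the question is whether the conjugating element can be taken in $C$. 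Here I would use Springer's theory of regular elements: the line $X$ is spanned by a vector $v$, and being noncrossing forces $v$ to be a regular eigenvector configuration compatible with $c$; the $W$-orbit of such a $v$ meets the "$c$-compatible" locus in exactly one $C$-orbit by a Springer-type uniqueness statement for the relevant eigenvalue. The hard part — and the step I expect to be the main obstacle — is exactly this: upgrading "$w,w'$ are $W$-conjugate coatoms of $NC(W)$" to "$w,w'$ are $C$-conjugate", since $NC(W)$-coatoms in a fixed $W$-class can a priori split into several $C$-orbits and one must rule this out. I anticipate the cleanest fix is to show the number of noncrossing lines is exactly $h$ times the number of $W$-orbits of lines divided by the appropriate stabilizer orders, matching $|C|=h$ acting with the right stabilizers; equivalently, that $C$ acts on each fiber of $\phi$ transitively, which I would extract from the known formula (due to the cyclic sieving / Springer correspondence for $NC(W)$) that $c^d$ fixes a noncrossing flat $X$ iff $c^d$ normalizes $W_X$ iff $d$ lies in the appropriate subgroup — a fact already available from the $C$-action on $NC(W)$ recalled just before this lemma.
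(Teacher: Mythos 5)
Your reduction of the problem is on target: surjectivity is already Lemma~\ref{conjugate-to-noncrossing}, and the whole content is that $C$ acts transitively on the set of noncrossing lines inside a single $W$-orbit. But at exactly that point your proposal stops being a proof. The counting route asserts that ``the number of $C$-orbits on the coatoms of $NC(W)$ equals the number of $W$-orbits of lines'' and that ``a direct comparison shows the two counts agree'' --- that equality \emph{is} the lemma (via the coatom--reflection correspondence), so as written the argument is circular, and no mechanism is given for computing either side uniformly. The alternative route via Springer's theory is also flawed as stated: a noncrossing line $X$ is a flat of the arrangement, hence lies on many reflecting hyperplanes, so it is not spanned by a regular eigenvector, and no Springer-type uniqueness statement applies to it directly; likewise the claimed fact ``$c^d$ fixes $X$ iff $d$ lies in the appropriate subgroup'' is not available at this point in the paper in the form you need. (A smaller slip: the coatoms of $[1,c]_T$ biject with \emph{all} of $T$, of cardinality $nh/2$, not with the $n$ reflections of one reduced $T$-word; this is Bessis's bijection $t \mapsto V^{ct}$.)

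For comparison, the paper closes precisely this gap by a chain of equivariant identifications plus a cardinality squeeze: Bessis's bijection $t \mapsto V^{ct}$ is $C$-equivariant, so $C\backslash\{\text{noncrossing lines}\} \cong C\backslash T \cong \langle C,-1\rangle\backslash \Phi$; on the other side, Tits-cone geometry gives $W\backslash\{\text{lines}\} \cong \langle W,-1\rangle\backslash\{\text{half-lines}\} \cong \langle -w_0\rangle\backslash \Pi$ via the fundamental-weight rays. The nontrivial input is then a surjection $\langle -w_0\rangle\backslash \Pi \twoheadrightarrow \langle C,-1\rangle\backslash\Phi$, proved for a bipartite Coxeter element using $w_0=c^{h/2}$ (or $c^{(h-1)/2}c_+$) and Bourbaki's explicit representatives $\theta_i=s_n\cdots s_{i+1}(\alpha_i)$ for the $C$-orbits on $\Phi$, showing every root is $C$-conjugate to $\pm$ a simple root; combined with the surjectivity of $\phi$, cardinality forces both maps to be bijections. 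Some such concrete input (or a case-by-case check, which the paper notes is possible) is what your sketch is missing; without it the transitivity of $C$ on each fiber of $\phi$ remains an unproven assertion.
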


\begin{proof}
The map $\phi$ occurs as the top horizontal map in the following larger diagram of maps,
explained below, involving the orbits of various groups acting on  
the sets of lines, noncrossing lines,
half-lines, reflections $T$, roots $\Phi$, and simple roots $\Pi$:
\begin{equation}
\label{large-diagram-of-bijections-surjections}
\begin{matrix}
C \backslash \{\text{ noncrossing lines }\}
  & \overset{\phi}{\longrightarrow} & 
    W \backslash \{\text{ lines } \} \\
 & & \\
{f_1}\uparrow & & \uparrow{g_1} \\
 & & \\
C \backslash T 
   & 
    & \langle W,-1  \rangle \backslash \{ \text{ half-lines } \} \\
 & & \\
{f_2}\uparrow & & \uparrow{g_2} & & \\
 & & \\
\langle C, -1 \rangle \backslash \Phi 
  &\overset{\psi}{\longleftarrow} 
    & \langle -w_0 \rangle \backslash \Pi 
\end{matrix}
\end{equation}
We have seen in Lemma~\ref{conjugate-to-noncrossing} that
$\phi$ is well-defined and surjective.  Our strategy will be to show that 
\begin{enumerate}
\item[$\bullet$]
each of the vertical maps $f_1,f_2,g_1,g_2$  is bijective, and 
\item[$\bullet$]
the other horizontal map $\psi$ is surjective.  
\end{enumerate}
Cardinality considerations will then imply that both $\phi, \psi$ are also bijective.  

\vskip.1in
\noindent
{\sf The bijection $f_1$.}
The map $f_1$ is induced by the map from the set of {\it reflections} $T$ in $W$ to the
set of noncrossing lines
$$
\begin{array}{rcl}
T & \longrightarrow &\{ \text{ noncrossing lines }\} \\
t & \longmapsto &V^{ct} \\
\end{array}
$$
which is bijective by work of Bessis \cite[\S 2.4]{Bessis}, and 
equivariant for the action of $C$ via conjugation on $T$ and via translation on 
noncrossing lines:
$$
c^d t c^{-d} \longmapsto 
V^{c \cdot c^{d} t c^{-d}} 
= V^{c^{d} \cdot ct \cdot c^{-d}} 
= c^d(V^{ct}).
$$

\vskip.1in
\noindent
{\sf The bijection $f_2$.}
The map $f_2$ is induced from map $\Phi \longrightarrow T$ that sends a root 
$\alpha$ to the reflection $t_\alpha$ through the hyperplane perpendicular to $\alpha$.
The latter map is two-to-one, with fibers $\{\pm \alpha\}$, inducing
a bijection $\langle -1 \rangle \backslash \Phi \longrightarrow T$.
It is also equivariant for the $W$-action as well as the $C$-action by restriction:
$$
c(\pm \alpha) \longmapsto t_{\pm c(\alpha)} = c t_{\alpha} c^{-1}.
$$
Hence $f_2$ is a bijection.

\vskip.1in
\noindent
{\sf The bijection $g_1$.}
A {\it half-line} is either of the two rays emanating from the origin
inside any of the lines for $W$.  
There is a two-to-one map $\{ \text{ half-lines } \} \longrightarrow \{ \text{ lines }\}$
that sends a half-line to the line that it spans, inducing the bijection $g_1$.

\vskip.1in
\noindent
{\sf The bijection $g_2$.}
Normalize the set of simple roots $\Pi=\{\alpha_1,\ldots,\alpha_n\}$,
to have unit length, and denote by 
$\{\delta_1,\ldots,\delta_n\}$ their dual basis, defined by 
\begin{equation}
\label{fundamental-dominant-weight-definition}
(\alpha_i,\delta_j) = \begin{cases}
1 & \text{ if }i=j,\\
0 & \text{ if }i \neq j.
\end{cases}
\end{equation}
Then the map
$
\Pi  \longrightarrow  \{ \text{ half-lines } \}
$ 
sending $\alpha_i \longmapsto \RR_+ \delta_i
$
gives a well-defined map 
$$
\langle -w_0 \rangle \backslash \Pi \overset{g_2}{\longrightarrow} 
\langle W,-1 \rangle \backslash \{ \text{ half-lines } \}
$$
where $w_0$ is the unique {\it longest element} of $W$,
because $-w_0$ lies in $\langle W,-1 \rangle$.  Note that $-w_0$ always takes
a simple root $\alpha_i$ to another simple root $\alpha_j$, corresponding
to the fact that conjugation by $w_0$ sends the simple reflection $s_i$ to
another simple reflection $w_0 s_i w_0 = s_j$, via a 
{\it diagram automorphism}\footnote{Although not needed in the sequel, we 
remark that the diagram \eqref{large-diagram-of-bijections-surjections}
will also show the following:
both sets of orbits 
$
C\backslash \{\text{ noncrossing lines }\}
$
and
$
W\backslash \{\text{ lines }\}
$
biject with the orbits of the simple reflections $S$ 
under the Coxeter diagram automorphism
$s \mapsto w_0 s w_0$.  
Thus their cardinality is at most $n=|S|$, with equality 
if and only if $-1$ lies in $W$.};
see \cite[\S 2.3]{BBCoxeter}.

The usual geometry of {\it Tits cones} and 
{\it Coxeter complexes} \cite[\S 1.15, 5.13]{Humphreys} says 
\begin{enumerate}
\item[$\bullet$]
each half-line is in the $W$-orbit of {\it exactly one} half-line
$\RR_+ \delta_1,\ldots,\RR_+ \delta_n$, and 
\item[$\bullet$]
these are the only half-lines lying within the
{\it dominant cone} 
$$
F:= \{ x \in V: (\alpha_i,x) \geq 0 \text{ for }i=1,2,\ldots, n\}
=\RR_+ \delta_1 + \cdots +\RR_+ \delta_n
$$
\end{enumerate}
In particular, this shows that $g_2$ is surjective.  
It remains to show that $g_2$ is also injective,
that is, assuming that $\RR_+ \delta_j$ lies in
$\langle W, -1 \rangle. \RR_+ \delta_i$,
we must show that this forces $\alpha_j \in \langle -w_0 \rangle. \alpha_i$.
If $\RR_+ \delta_j = w \left( \RR_+ \delta_i \right)$ for some $w$ in $W$, then
the above-mentioned Tits cone geometry implies $i = j$ and hence $\alpha_j = \alpha_i$. 
So assume without loss of generality that 
$\RR_+ \delta_j= -w \left(\RR_+ \delta_i\right)$ 
for some $w \in W$.  Note that both $-1$ and $w_0$ send the dominant cone $F$
to its negative $-F$, and therefore $-w_0(F)=F$.  Hence
$-w_0 \left( \RR_+ \delta_j \right) = w_0 w \left( \RR_+ \delta_i\right)$ 
lies in $F$.  The above-mentioned Tits cone geometry
forces $-w_0 \left( \RR_+  \delta_j \right) = \RR_+ \delta_i$.
Since $-w_0$ is an isometry, this forces $-w_0 \delta_j = \delta_i$, and 
also $-w_0 \alpha_j = \alpha_i$, as desired.

\vskip.1in
\noindent
{\sf The surjection $\psi$.}
We will show that the inclusion $\Pi \hookrightarrow \Phi$
induces a well-defined surjection 
$\langle -w_0 \rangle \backslash \Pi 
\overset{\psi}{\longrightarrow} 
\langle C, -1 \rangle \backslash \Phi$,
whenever the Coxeter element $c$ is chosen to be {\it bipartite},
that is, one has properly 2-colored the Coxeter diagram, 
giving a decomposition of the simple reflections into two mutually commuting sets
$$
S=\{s_1,s_2,\ldots,s_m\} \sqcup \{s_{m+1},s_{m+2},\ldots,s_n\}
$$
so that
\begin{equation}
\label{bipartite-factorization}
c=c_+ c_- 
=\underbrace{s_1 s_2 \ldots s_m}_{c_+} \cdot 
   \underbrace{s_{m+1} s_{m+2} \ldots s_n}_{c_-}.
\end{equation}
It is well-known 
(see \cite[Chap. V \S 6, Exer. 2]{Bourbaki}, \cite[\S 3.19 Exer. 2]{Humphreys}) 
that in this setting, the longest element $w_0$ can be expressed as
$$
w_0 = \begin{cases}
c^{\frac{h}{2}} &\text{ if }h \text{ is even,}\\
c^{\frac{h-1}{2}} c_+& \text{ if }h \text{ is odd.}
\end{cases}
$$

We first show that $\psi$ is well defined,
that is, for any $\alpha_i$ in $\Pi$, the root
$-w_0 (\alpha_i)$ lies in $\langle C, -1 \rangle.\alpha_i$.
If $h$ is even, then $w_0 = c^{\frac{h}{2}}$ and this is obvious,
so assume without loss of generality that $h$ is odd. Then 
$-w_0 (\alpha_i) = - c^{\frac{h-1}{2}} c_+ (\alpha_i)$.  
If the subscript $i$ in $\alpha_i$ lies in $\{1,2,\ldots,m\}$, so that
$s_i$ appears in $c_+$, then $c_+ (\alpha_i) = -\alpha_i$,
and hence $-w_0 (\alpha_i) = c^{\frac{h-1}{2}} (\alpha_i)$ lies in 
$\langle C, -1 \rangle. \alpha_i$.  
On the other hand, if the subscript $i$ lies in $\{m+1,m+2,\ldots,n\}$
so that $s_i$ appears in $c_-$, then
$c_- (\alpha_i) = -\alpha_i$ and 
$$
-w_0 (\alpha_i) = -  c^{\frac{h-1}{2}} c_+ (\alpha_i) 
              =  c^{\frac{h-1}{2}} c_+ c_- (\alpha_i) 
              = c^{\frac{h+1}{2}} (\alpha_i)
$$
which lies in $\langle C, -1 \rangle.\alpha_i$.  
This proves that $\psi$ is well defined.

To show that $\psi$ is surjective, it suffices to show that 
every root in $\Phi$ is $C$-conjugate to plus or minus a simple root.
Starting with the factorization $c=c_1 c_2 \ldots c_n$ it is known that
one can define a sequence of roots
$$
\theta_i := s_n s_{n-1} \cdots s_{i+1}(\alpha_i)
$$
for $i=1,2,\ldots,n$, giving a complete set of representatives
for the $C$-orbits on $\Phi$; see \cite[Chap VI, \S1, No. 11, Prop. 33]{Bourbaki}.
Thus it suffices to check that for each $i = 1, 2, \dots, n$, 
one has $\theta_i$ lying in $\pm C.\alpha_i$.

If $i$ lies in $\{m+1,m+2,\ldots,n\}$, so $s_i$ appears in $c_-$, 
then $s_n, s_{n-1}, \dots, s_{i+1}$ all fix $\alpha_i$.  Therefore 
$
\theta_i = s_n s_{n-1} \cdots s_{i+1} \alpha_i=\alpha_i
$ 
and we are done.
If $i$ lies in $\{1,2,\ldots, m\}$, so $s_i$ appears in $c_+$, then
$s_1,s_2,\ldots,s_{i-1}$ all fix $\alpha_i$. Therefore
$$
c (\theta_i) = s_1 s_2 \cdots s_n \cdot s_n s_{n-1} \cdots s_{i+1} (\alpha_i) 
= s_1 s_2 \cdots s_i (\alpha_i) 
= - s_1 s_2 \cdots s_{i-1} (\alpha_i) 
= -\alpha_i.
$$
Hence $\theta_i=-c^{-1}(\alpha)$ lies in $-C.\alpha_i$, as desired.
\end{proof}

We remark that Lemma~\ref{W-C-orbit-lemma} was originally verified  
case-by-case along these lines: 
\begin{enumerate}
\item[$\bullet$]
It is trivial to check it in the dihedral types $I_2(m)$, 
\item[$\bullet$]
it can be checked by computer in the exceptional types, and
\item[$\bullet$]
it is easy to argue directly in
the classical types $A,B/C,D$.  
\end{enumerate}
For example, in type $A_{n-1}$, the $W$-orbits of noncrossing
lines correspond to set partitions with precisely two blocks of fixed sizes $k$ and $n-k$.  
The set of noncrossing partitions with block sizes $k$ and $n-k$ forms a single $C$-orbit.
The analysis in the other classical types $B/C$ and $D$ is similar.  

\vskip.1in

The last lemma  needed for the proof of 
Proposition~\ref{extreme-orbits-proposition}(ii)
pertains to the {\it Coxeter plane} $P \subset V$ 
corresponding to a bipartite Coxeter element $c = c_+ c_-$.
This $P$ is a $2$-dimensional subspace of $V$, acted on by the dihedral group 
$\langle c_+, c_- \rangle$.  The subgroup of this dihedral group generated by $c$
acts on $P$ by $h$-fold rotation; see \cite[Chapter 3]{Humphreys}.  

\begin{lemma}
\label{one-dimensional-noncrossings-not-perpendicular}
A noncrossing line cannot be 
orthogonal to the Coxeter plane $P$.
\end{lemma}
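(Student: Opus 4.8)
The plan is to argue by contradiction: suppose $\ell$ is a noncrossing line orthogonal to the Coxeter plane $P$, and derive a constraint on $\ell$ that is incompatible with the structure of the Coxeter element $c = c_+ c_-$. The starting point is that $\ell \perp P$ means $\ell$ lies in the orthogonal complement $P^\perp$ inside $V$, which is a $c$-stable subspace. Since $c$ acts on $P$ by rotation through $2\pi/h$, its eigenvalues there are the primitive $h$-th roots of unity $\omega^{\pm 1}$; hence $c$ acts on $P^\perp$ with the remaining eigenvalues of $c$ on $V$, none of which is a primitive $h$-th root of unity (by simplicity of the $\omega$-eigenspace, cited in the proof of Proposition~\ref{extreme-orbits-proposition}(iii)). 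This already says $\ell$ cannot be an $\omega$-eigenline for $c$, but more is needed.

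The key step is to recall from the proof of Lemma~\ref{W-C-orbit-lemma} that every noncrossing line is the $W$-fixed-space $V^{ct}$ for a unique reflection $t \in T$, equivariantly for the $C$-action by conjugation. So there is a reflection $t$ with $\ell = V^{ct}$, i.e. $\ell$ is the fixed line of the element $ct$, which has reflection length $n-1$. I would analyze this by splitting on the parity of $h$ and using the bipartite factorization $c = c_+ c_-$: write $t = t_\alpha$ for the positive root $\alpha$ normal to the reflecting hyperplane of $t$. The condition $\ell \perp P$, combined with $P = $ span of the two leading eigenvectors of the dihedral action of $\langle c_+, c_- \rangle$, translates into the statement that the line $\ell$ is fixed by the projection to $P$ being zero; equivalently, every vector of $\ell$ is orthogonal to the $W$-regular $c$-eigenvectors spanning the $\omega^{\pm 1}$-eigenspaces.

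The main obstacle — and the crux of the argument — is to convert ``$\ell$ is a noncrossing line orthogonal to $P$'' into a contradiction using a positivity or sign-type property of noncrossing flats. I expect the cleanest route is the following: the Coxeter plane $P$ carries a distinguished $c$-eigenvector $v_0$ (the $W$-regular $\omega$-eigenvector from Proposition~\ref{extreme-orbits-proposition}(iii)'s proof), and the dominant chamber $c_0$ meets $P$ in a genuine $2$-dimensional sector. One knows (Steinberg, or the Tits cone geometry already invoked for $g_2$) that the real part of a regular eigenvector, suitably normalized, lies in the interior of a chamber, and that every reflecting hyperplane $H_t$ meets $P$ in a line — indeed the $h$ lines $H_t \cap P$ for $t$ ranging over the reflections partition into the $h$ walls of the dihedral action. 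If $\ell = V^{ct} \perp P$, then $\ell \subseteq H_t$ would force $H_t \supseteq P^\perp \oplus (\ell)$... I would instead pin down $\ell \subseteq H_t$ directly: since $ct$ fixes $\ell$ pointwise and $t$ fixes $H_t$ pointwise, one checks $\ell \subseteq H_t$ iff $c$ also fixes $\ell$, i.e. iff $1$ is an eigenvalue of $c$ on $\ell$ — impossible since $c$ is regular with no eigenvalue $1$. Hence $\ell \not\subseteq H_t$, so $t$ acts on $\ell$ by a nontrivial reflection; but $ct$ fixes $\ell$, forcing $c$ to act on $\ell$ as the same nontrivial involution, contradicting that $c$ has finite order $h$ with $\ell$ a $c$-stable line (a line can only be acted on by $\pm 1$, and $c|_\ell = -1$ would make $h$ even and $\ell$ a $(-1)$-eigenline of $c$, which I would then rule out by the orthogonality to $P$ together with the known multiplicity of the eigenvalue $-1 = \omega^{h/2}$ of $c$ on $V$ lying inside $P$ when $h$ is even). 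Assembling these sign constraints — $c|_\ell \in \{1,-1\}$, the first excluded by regularity and the second by the eigenvalue-location fact that the $\omega^{h/2}$-eigenspace of $c$ sits in $P$ — yields the contradiction and proves the lemma.
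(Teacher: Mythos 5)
Your proposal has a genuine gap at its crux. From $\ell = V^{ct}$ you only know that $ct$ fixes $\ell$ pointwise, which gives $t(v)=c^{-1}(v)$ for $v\in\ell$, i.e.\ $t(\ell)=c^{-1}(\ell)$. The step ``$\ell\not\subseteq H_t$, so $t$ acts on $\ell$ by a nontrivial reflection, forcing $c$ to act on $\ell$ as an involution'' presupposes that $t$ (equivalently $c$) \emph{stabilizes} the line $\ell$, and nothing you have established implies $c(\ell)=\ell$: a reflection stabilizes a line only when the line lies in its hyperplane or is spanned by its root, and the $C$-orbit of a noncrossing line is typically nontrivial (that is exactly what Lemma~\ref{W-C-orbit-lemma} is about). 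So the whole ``sign constraints on $c|_\ell$'' framework does not get off the ground. Moreover, even if one granted $c(\ell)=\ell$ with $c|_\ell=-1$, your proposed way of excluding this case rests on a false fact: the $\omega^{h/2}$-eigenspace of $c$ does \emph{not} lie in the Coxeter plane $P$ (the complexification of $P$ carries only the $\omega^{\pm1}$-eigenvalues). For instance in type $A_3$ with $c=(1234)$, the $(-1)$-eigenline is spanned by $(1,-1,1,-1)$, which is orthogonal to $P$ and is the flat of the \emph{crossing} partition $\{\{1,3\},\{2,4\}\}$; this shows that eigenvalue bookkeeping alone cannot finish the argument --- noncrossingness must enter in an essential way beyond ``$\ell=V^{ct}$ for some $t$'' plus $c$-eigenvalue data on $\ell$.

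For comparison, the paper's proof avoids stabilizers of $\ell$ entirely: with $v$ spanning $\ell=V^{tc}$ one has $t(v)=c(v)$, and since $c$ is orthogonal with $c(P)=P$ and has no fixed vectors, the vector $v-t(v)$ is nonzero and perpendicular to $P$; but $v-t(v)$ is proportional to the root of $t$, hence perpendicular to $H_t$, forcing $P\subseteq H_t$, which contradicts the standard fact that no reflecting hyperplane contains the Coxeter plane. If you want to salvage your outline, you would need a separate argument that a noncrossing line orthogonal to $P$ is $c$-stable (which is not true for flats in general, as the $A_3$ example shows) --- in practice the root-direction computation above is the efficient substitute.
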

\begin{proof}[Proof of Lemma~\ref{one-dimensional-noncrossings-not-perpendicular}]
Suppose that $X$ in $\LLL$ is a noncrossing line, spanned over $\RR$ by a vector $v$, and that $v$ is perpendicular to $P$.  Since $X$ is noncrossing, there exists a reflection
$t$ in $T$ such that $X = V^{tc}$.  
Let $H$ be the reflecting hyperplane for $t$.

One has that $t(v) = t(tc(v)) = c(v)$, and therefore $v \neq t(v)$ (else
$V^c \neq \{ 0 \}$).  Moreover, as $c(P) = P$ and $c$ is an
orthogonal transformation, the vector
$t(v) = c(v)$ is also perpendicular to $P$.  It follows that the difference
$v - t(v)$ is perpendicular to $P$ and nonzero.  The vector $v - t(v)$ is perpendicular 
to $H$, which shows that $P \subseteq H$.  But by the discussion in
\cite[Section 3.18]{Humphreys}, no reflecting hyperplane contains the Coxeter plane $P$,
so this is a contradiction.
\end{proof}

\subsection{Proof of Proposition~\ref{extreme-orbits-proposition}(ii)}
\label{noncrossing-lines-proof-section}

We will assume, without loss of generality, that our Coxeter element $c$ is {\sf bipartite}. 
To prove (ii), we begin by analyzing the set $V^{\Theta}(1)$.  Consider the 
map $\Theta: V \rightarrow V$ discussed in Section~\ref{main-conjecture-subsection}.
Because any flat $X \in \LLL$ has the form $X = V^w$ for some $w \in W$ and 
$\Theta$ is $W$-equivariant, one has that $\Theta(X) \subseteq X$ for all $X \in \LLL$ and
the restriction $\Theta|_X$ is a well-defined map $X \rightarrow X$ given by
homogeneous polynomials of degree $h+1$.

If the flat $X$ is $1$-dimensional, choosing a coordinate $x$ for $X$,
so that its coordinate ring is $\CC[x]$,
one can express $(\Theta-\xx)|_X=\alpha x^{h+1} - x$ for some
$\alpha \in \CC^{\times}$.
Its zero set $X^\Theta$ consists of the $h+1$ points
\begin{equation*}
\{ 0 \} \sqcup 
\underbrace{ 
 \{ \alpha^{-\frac{1}{h}}, \omega \alpha^{-\frac{1}{h}}, \dots, \omega^{h-1} \alpha^{-\frac{1}{h}} \} 
}_{{V^\Theta}(1) \cap X},
\end{equation*}
where $\omega$ is a primitive $h^{th}$ root of unity, and each point occurs with multiplicity
one.  Recall that the cyclic group $C$ acts via scaling by $\omega$, and so its
action on these $h+1$ points decomposes into the two orbits $\{0\}, V^\Theta(1) \cap X$
shown above.
Since $X$ is $1$-dimensional and $W$ is a real reflection group, it follows that any
element $w$ in $W$ which stabilizes $X$ must act as $1$ or $-1$ on $X$.  Since 
$C$ acts by scaling by $\omega$, the $W \times C$-stabilizer of 
the typical point $\alpha^{-\frac{1}{h}}$ in the orbit $V^{\Theta}(1) \cap X$ equals
\begin{equation}
\label{line-WxC-stabilizer}
\left\{ 
(w,c^d) \,:\, \begin{matrix} \text{$w$ acts as $1$ on $X$ and $c^d = 1$, or}\\
                               \text{$w$ acts as $-1$ on $X$ and $d \equiv  \frac{h}{2} \bmod{h}$}
                 \end{matrix}.
\right\}
\end{equation}

Suppose in addition that the flat $X$ is noncrossing.  
It remains to show that the $W \times C$-stabilizer of 
the noncrossing parking function $[1, X] \in \Park^{NC}_W$ equals the same
subgroup as in \eqref{line-WxC-stabilizer}.  
It follows directly from the definition of $\Park^{NC}_W$ that this stabilizer equals
\begin{equation*}
\{ (w, c^d) \,:\, \text{$c^d$ stabilizes $X$ and $c^d w^{-1}$ acts as $1$ on $X$} \}.
\end{equation*}
Let $\pi : V \twoheadrightarrow P$ be orthogonal projection onto the Coxeter plane.  
If $c^d$ stabilizes $X$ for some $d \geq 0$, since $c^d$ acts orthogonally, one has that
$c^d$ stabilizes the subspace $\pi(X)$ of $P$, as well, and acts by the same
scalar on both $X$ and $\pi(X)$.  
By Lemma~\ref{one-dimensional-noncrossings-not-perpendicular}, this subspace
$\pi(X)$ is a line inside $P$, that is, $\pi(X) \neq 0$.   
Since $c$ acts on $P$ by $h$-fold rotation, this forces
either
\begin{enumerate}
\item[$\bullet$] $d \equiv 0 \bmod{h}$, so that $c^d$ fixes the line $\pi(X)$ (and $X$) 
pointwise, or 
\item[$\bullet$] $d \equiv \frac{h}{2} \bmod{h}$, so that 
$c^d$ acts as $-1$ on the line $\pi(X)$ (and $X$).
\end{enumerate}
It follows that the $W \times C$-stabilizer of 
$[1,X] \in \Park^{NC}_W$ is as described in \eqref{line-WxC-stabilizer}.

Let $\beta_1, \dots, \beta_r$ be a complete set of representatives for the $W \times C$-orbits
in $V^{\Theta}(1)$.  By Lemma~\ref{conjugate-to-noncrossing}, without loss of generality
we can assume that $\beta_i \in X_i$ for all $i$, where $X_1, \dots, X_r \in \LLL$ are 
noncrossing and are a complete set of representatives for the $W$-orbits of lines
in $\LLL$.
Since the $W \times C$-stabilizers of $\beta_i$ and $[1, X_i]$ are equal
for all $i$, we get a well defined $W \times C$-equivariant map $V^{\Theta}(1) \rightarrow 
\Park^{NC}_W$ induced by $\beta_i \mapsto [1, X_i]$.  

We claim that the map $V^{\Theta}(1) \rightarrow \Park^{NC}_W$ described above is
a bijection
onto $\Park^{NC}_W(1) := \{ [w, X] \in \Park^{NC}_W \,:\, \dim(X) = 1 \}$.  By stabilizer equality, this 
map restricts to a bijection from the $W \times C$-orbit of $\beta_i$ to the 
$W \times C$-orbit of $[1, X_i]$ for all $i$.  Moreover, the image of the 
$W \times C$-orbit of $\beta_i$ is
$\{ [w, c^d X_i] \,:\, \text{$w \in W$, $c^d \in C$} \}$.  Since the $W \times C$-orbit of $\beta_i$ inside
$V^{\Theta}(1)$ is $V^{\Theta}(1) \cap \{wX_i \,:\, w \in W \}$ and the $W$-orbit of 
$X_i$ contains the $C$-orbit of $X_i$, the sets $\{ [w, c^d X_i] \,:\, \text{$w \in W$, $c^d \in C$} \}$
are disjoint for distinct flats $X_i$ and the map
$V^{\Theta}(1) \rightarrow \Park^{NC}_W(1)$ is injective.  
Lemma~\ref{W-C-orbit-lemma} shows that this map is also surjective.

This completes the proof of Proposition~\ref{extreme-orbits-proposition}(ii).

\section{Type $B/C$}
\label{type-BC-proof-section}

Before proceeding with the proof, we review from \cite{Reiner} 
how to visualize the elements of $NC(W)$ for $W$ of type $B/C$
and extend this to similarly visualize the elements of $\Park^{NC}_W$,
analogous to the description for type $A$ given 
in Example~\ref{type-A-pictorial-example}.

\subsection{Visualizing type $B/C$}

The Weyl group $W$ of type $B_n$ or $C_n$ is the {\it hyperoctahedral group}
$\symm^{\pm}_n$ of $n \times n$ signed permutation matrices, that is, matrices
having one nonzero entry equal to $\pm 1$ in each row and column.
These signed permutation matrices act on $V = \CC^n$, but
we will also often think of $w$ in $W$ as a permutation of the set 
$$
\pm [n]:=\{1,2,\dots,n,-1,-2,\dots,-n\}
$$
satisfying $w(-i) = - w(i)$.
The reflections $T \subseteq W=\symm^{\pm}_n$ are tabulated here: 
\medskip
\begin{center}
\begin{tabular}{|c|c|}\hline
reflection & reflecting hyperplane\\\hline\hline
$t^+_{ij}=(i,j)(-i,-j)$ & $x_i=+x_j$ \\\hline
$t^-_{ij}=(i,-j)(-i,j)$ & $x_i=-x_j$ \\\hline
$t^-_{ii}=(+i,-i)$ & $x_i=0(=-x_i)$ \\\hline
\end{tabular}
\end{center}
\medskip
\noindent
A typical intersection flat $X$ has several blocks of coordinates
set equal to each other or their negatives, and possibly one block
(called the {\it zero block}, if present) where the coordinates are
all zero.  This corresponds to a set partition
$\pi=\{B_1,\ldots,B_\ell\}$ of the set $\pm [n]$ with  
$\pi=-\pi$, in the sense that for each block $B_i$ there exists some block $B_j=-B_i$,
and at most one zero block $B_k=-B_k$.
For example, when $n=9$, the flat $X$ defined by the equations
$$
\begin{aligned}
x_1&=-x_6=-x_9\\
x_2&=x_5=0\\
x_3&=x_4\\
x_7&=x_8
\end{aligned}
$$ 
has zero coordinates $x_2,x_5$, corresponding to the
zero block $\{\pm 2,\pm 5\}$ in the associated set partition:
\begin{equation}
\label{type-B9-set-partition-example}
\begin{aligned}
\pi= &\{\{+1,-6,-9\},\{-1,+6,+9\},\{\pm 2,\pm 5\}, \\
      &\quad  \{+3,+4\},\{-3,-4\},\{+7,+8\},\{-7,-8\}\}.
\end{aligned}
\end{equation}

One can take as Coxeter generators for $W$ 
the set $S = \{s_0, s_1, . . . , s_{n-1} \}$,
where 
$$
\begin{aligned}
s_i&=t^+_{i,i+1}=(i,i+1) \text{ for }i=1,2,\ldots,n-1,\text{ and }\\
s_0&=t^-_{ii}=(-1,+1).
\end{aligned}
$$
The Coxeter element $c=s_0s_1 \cdots s_{n-1}$
acts as the $2n$-cycle 
$$
c=(+1,+2,\dots,+n,-1,-2,\dots,-n)
$$ 
on $\pm [n]$, and hence has order $h=2n$.  Drawing these values in
the $2n$-cycle clockwise around a circle, 
one can depict set partitions of $\pm [n]$ as before, by drawing
the polygonal convex hulls of their blocks.   The flats $X$ in $\LLL$ 
have the extra property that they are {\it centrally symmetric},
and at most one block is sent to itself by the central symmetry.
Furthermore a flat $X$ in $\LLL$ lies in $NC(W)$ if and only if its 
blocks are noncrossing.  
For example, the element $X$ or partition $\pi$ of $\pm [9]$ from
\eqref{type-B9-set-partition-example} is noncrossing, and is
depicted here:

\begin{center}
\includegraphics[scale=.5]{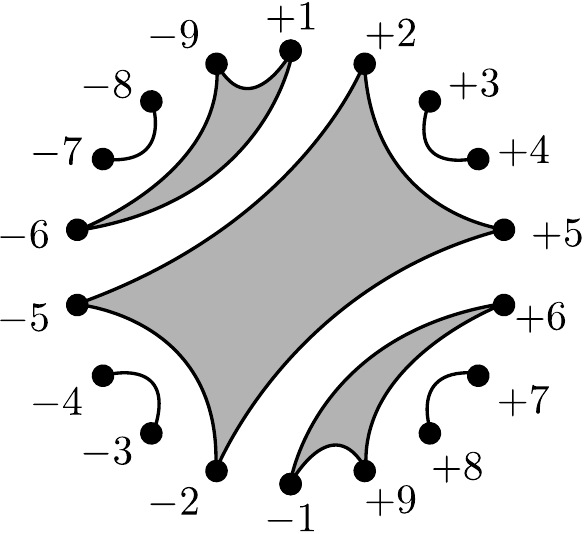}
\end{center}

As in type $A$, such a noncrossing partition $\pi$ or flat $X$ bijects
with the signed permutation $w$ whose associated permutation of 
$\pm [n]$ is obtained by orienting the edges of each block $B_i$ counterclockwise.
One can check that the conjugation action of $C=\langle c \rangle$
on $NC(W)=[1,c]_T$ again corresponds to clockwise rotation 
through $\frac{2 \pi}{h}=\frac{2 \pi}{2n}$ in the picture.

Also as in type $A$ (see Example~\ref{type-A-pictorial-example}),
when $W=\symm^{\pm}_n$ is of type $B_n/C_n$, 
an element $[w,X]$ of $\Park^{NC}_W$ is an 
equivalence class where $X$ is a noncrossing
flat, corresponding to some noncrossing partition $\pi$ of $\pm [n]$,
say with nonzero blocks $\{B_1,-B_1,\ldots,B_\ell,-B_\ell\}$,
and $w$ is a signed permutation in $\symm^{\pm}_n$ considered only up to its
coset $wW_X$ for the parabolic subgroup 
$$
W_X = \symm_{B_1} \times \cdots \times \symm_{B_\ell} \times \symm^{\pm}_{B_0}.
$$
Here $\symm_{B_i}$ is isomorphic to a symmetric group on $|B_i|$ letters
that permutes the elements of $B_i$, and 
$\symm^{\pm}_{B_0}$ is a hyperoctahedral group acting on the coordinates
in the zero block $B_0$, if present.
One can think of the coset $wW_X$ as representing a
function $w$ assigning to each block $B_i$ the (unordered) subset 
$w(B_i):=\{w(j)\}_{j \in B_i} \subseteq \pm [n]$.
It is convenient to visualize $[w,X]$ as labeling the blocks $B_i$
by the sets $w(B_i)$ in the noncrossing partition diagram for $\pi$.

For example, when $n=9$, consider the element $[w,X]$ of $\Park^{NC}_W$
in which $X$ is represented by the partition $\pi$
in \eqref{type-B9-set-partition-example}, with
\begin{equation}
\label{typeB-parking-function-example}
\begin{aligned}
W_X&=\symm_{\{+1,-6,-9\}} \times \symm_{\{+3,+4\}} \times \symm_{\{+7,+8\}} \times\symm^{\pm}_{\{2,5\}} \\
wW_X&=
\left(
\begin{matrix}
+1 & +2 & +3 & +4 & +5 & +6 & +7 & +8 & +9 \\
-6 & -3 & +5 & -9 & +2 & -8 & -1 & -4 & +7  
\end{matrix}
\right)W_X \\
&=
\left(
\begin{array}{ccc|cc|cc|cc}
+1 & -6 & -9 & +7 &+8 & +3 & +4 & +2 & +5 \\
-6 & +8 & -7 & -1 & -4 & +5 & -9 & -3 & +2
\end{array}
\right)W_X
\end{aligned}.
\end{equation}
This $[w,X]$ in $\Park^{NC}_W$ is depicted here:

\begin{center}
\includegraphics[scale=.5]{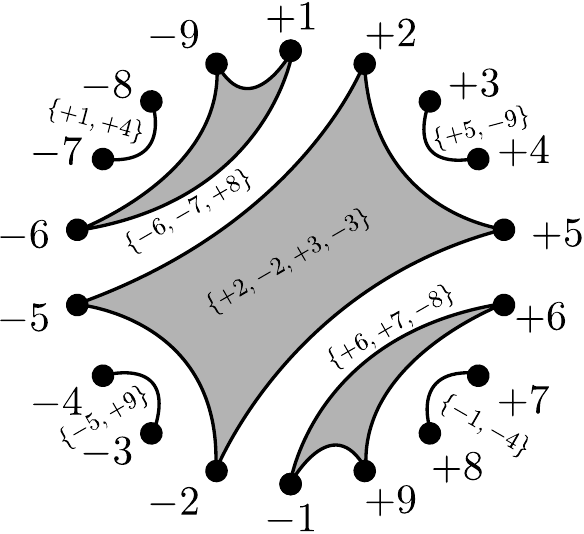}
\end{center}

The $W \times C$-action on $\Park^{NC}_W$ has
$W=\symm^{\pm}_n$ permuting the values within the sets that
label the blocks of the partition $\pi$, and
has the generator $c$ for $C$ rotating
the picture $\frac{2\pi}{2n}$ clockwise.

\subsection{Proof of Main Conjecture (intermediate version) 
in Type B/C}

When the hyperoctahedral group $W=\symm^\pm_n$, acts on $V = \CC^n$,
with $x_1, \dots, x_n$ the standard coordinate functionals in $V^*$,
we choose 
$$
(\Theta)=(\theta_1,\ldots,\theta_n):=(x_1^{2n+1}, \dots, x_n^{2n+1}).
$$ 
It is easily seen that this is an hsop of 
degree $h+1$ in $\CC[V]$, and that the map
$\Theta$ sending $x_i$ to $\theta_i$ is 
$W$-equivariant, so that \eqref{strong-hsop-setup} is satisfied.
In this setting, the subvariety $V^\Theta$ cut out by the ideal
$$
\begin{aligned}
(\Theta-\xx)&=(x_1^{2n+1} - x_1, \dots, x_n^{2n+1} - x_n) \\
            &=(x_1(x_1^{2n} - 1), \dots, x_n(x_n^{2n} - 1)) \\
\end{aligned}
$$ 
consists exactly of the points 
$(v_1, \dots, v_n) \in \CC^n$ such that each 
$v_i$ is either $0$ or a $2n^{th}$ root of unity,
that is, a power of $\omega := e^{\frac{2 \pi i}{2n}}$.
We now describe a $W \times C$-equivariant bijection 
$$
\begin{array}{rcl}
\Park^{NC}_W & \overset{f}{\longrightarrow} & V^\Theta\\
{} [w,X] &\longmapsto &v
\end{array}
$$
followed by a description of its inverse bijection.
\vskip.1in
\noindent
{\sf The forward bijection $f$.}

Given $[w,X]$, one starts with
the noncrossing partition $\pi$ of $\pm [n]$ corresponding
to $X$, and re-encodes it via a bijection from \cite[Proposition 6]{Reiner}
as a periodic parenthesization of the doubly infinite string
\begin{equation}
\label{infinite-cyclic-order}
\cdots -(n-1), -n, +1, +2, \cdots, +(n-1), +n, -1, -2, \cdots, -n, 
+1, +2, \dots
\end{equation}
For example, when $[w,X]$ and $\pi$ are
as in \eqref{typeB-parking-function-example},
this parenthesization is
\begin{equation}
\label{type-B-parenthesization-example}
\cdots -8) -9
+1) +2 (+3 +4) +5 (+6 (+7 +8) +9
-1) -2 (-3 -4) -5 (-6 (-7 -8) -9
+1) \cdots
\end{equation}
The idea of the encoding from \cite{Reiner}
is that each nonzero block $B$ of $\pi$ corresponds to a pair of parentheses,
having its left parenthesis appearing just to the left of some value $j$ in $\pm [n]$
that we will call the {\it opener} of block $B$.
The innermost
parentheses pairs enclose nonzero blocks that nest no other blocks in $\pi$,
whose elements can then be removed, and then one continues to
pair off the nonzero blocks enclosed with parentheses, inductively. 

The map $f$ then sends $[w,X]$ to the vector $v$ whose
$k^{th}$ coordinate $v_k$ depends upon the unique block $B_{i(k)}$ of $\pi$
for which $+k$ lies in $w(B_{i(k)})$:  
\begin{enumerate}
\item[$\bullet$]
if $B_{i(k)}$ is the zero block of $\pi$, then $v_k=0$, 
\item[$\bullet$]
if $B_{i(k)}$ is a nonzero block, with positive opener $+j$,
then $v_k=+\omega^j$, 
\item[$\bullet$]
if $B_{i(k)}$ is a nonzero block, with negative opener $-j$,
then $v_k=-\omega^j$.
\end{enumerate}
Continuing the example of $[w,X]$ and $\pi$ from
\eqref{typeB-parking-function-example}, one determines the coordinates
of $v=f([w,X])$ as follows.  
The zero block $B_0=\{\pm 2,\pm 5\}$ is labeled $w(B_0)=\{\pm 2,\pm 3\}$,
so $v_2=v_3=0$.  The nonzero blocks $B_i$ have openers and images 
$w(B_i)$ tabulated here
\begin{center}
\begin{tabular}{|c|c|c|}\hline
nonzero block $B_i$ & opener & image $w(B_i)$ \\\hline\hline
\{+1,-6,-9\} & -6 & \{-6,+8,-7\} \\\hline
\{-1,+6,+9\} & +6 & \{+6,-8,+7\} \\\hline
\{+3,+4\} & +3 & \{+5,-9\} \\\hline
\{-3,-4\} & -3 & \{-5,+9\} \\\hline
\{+7,+8\} & +7 & \{-1,-4\} \\\hline
\{-7,-8\} & -7 & \{+1,+4\} \\\hline
\end{tabular}
\end{center}
and from this one concludes that
\begin{equation}
\label{f-image-example}
\begin{array}{rcccccccccl}
v=(&v_1,      &v_2,&v_3,&v_4,     &v_5,      &v_6,      &v_7,      &v_8,      &v_9        &) \\
  (&-\omega^7,&0,  &0   &-\omega^7,&+\omega^3,&+\omega^6,&+\omega^6,&-\omega^6,&-\omega^3&). \\
\end{array}
\end{equation}
We leave it to the reader to check that $f$ 
is $W \times C$-equivariant. 

\vskip.1in
\noindent
{\sf The inverse bijection $f^{-1}$.}

Given $v$ in $V^\Theta$, one can first determine the noncrossing
partition $\pi$ of $\pm [n]$, via  its infinite parenthesization,
as follows.  There will be left parentheses located
just to the left of both $+j, -j$ if and only if either
of $\pm \omega^j$ occurs at least once among the coordinates $v_k$.  
Furthermore, the sizes of the blocks having openers 
$+j$ and $-j$ are both equal to the multiplicity $m_j$
with which the two values $\pm \omega^j$ 
occur among the coordinates $v_k$.  Given any such sequence
of multiplicities $(m_1,\ldots,m_n)$ having $\sum_{j=1}^n m_j\leq n$,
proceeding from the smallest $m_j$ to the largest, one
can recover the location of the right parentheses
closing the block opened by $+j$, for each $j$.   The zero block of 
$\pi$ then contains the unparenthesized letters left in the infinite
string. 

For example, given $n=9$ and the vector $v$ in $V^\Theta$ from
\eqref{f-image-example}, then $v$ has multiplicities 
$$
\begin{aligned}
m_3&=2\\
m_6&=3\\
m_7&=2\\
m_1&=m_2=m_4=m_5=m_8=m_9=0.
\end{aligned}
$$
One concludes that the infinite parenthesization has openers at
$\pm 3,\pm 6, \pm 7$, enclosing blocks of sizes $2,3,2$, respectively.
From this one can recover the parenthesization in 
\eqref{type-B-parenthesization-example} by first closing
off the blocks of size $2$ around $(+3, +4), (-3,-4)$, and the
blocks of size $2$ around $(+7,+8), (-7,-8)$.  Then one removes
the values $\pm 3,\pm 4,\pm 7, \pm 8$ from consideration, and
next closes off the blocks of size $3$ opened by $\pm 6$, 
namely $(+6 +9 -1), (-6 -9 +1)$.

Once one has recovered the parenthesization, and hence $\pi$, 
one determines the set $w(B_i)$ labeling the block $B_i$ opened by $+j$ as follows: 
$\pm k$ lies in the set $w(B_i)$ if and only if $v_k = \pm \omega^j$.
This gives $w$ up to its coset $wW_X$, determining $[w,X]=f^{-1}(v)$.

\section{Type $D$}
\label{type-D-proof-section}

We begin with a review from \cite{AthanasiadisR} 
on visualizing the elements of $NC(W)$ for $W$ of type $D$,
and extend this to similarly visualize the elements of $\Park^{NC}_W$,
analogous to the description of types $A$ and $B/C$.

\subsection{Visualizing type $D$}

The Weyl group $W$ of type $D_n$ is the subgroup of the hyperoctahedral group
$\symm_n^{\pm}$ consisting of $n \times n$ signed permutation matrices having
an even number of $-1$ entries, acting on $V = \CC^n$.  
Its reflections $T$ are these:
$$
\begin{aligned}
t^+_{ij}&=(i,j)(-i,-j),\\
t^-_{ij}&=(i,-j)(-i,j).
\end{aligned}
$$
The intersection subspaces $X$ in $\LLL$ of type $D_n$
correspond, as in type $B_n$, to set partitions
$\pi=\{B_1,\ldots,B_\ell\}$ of the set $\pm [n]$ having 
$\pi=-\pi$, except for an additional restriction:
if the zero block is present, not only is it unique, but 
it must involve at least two different coordinates
$x_i=x_j=0$.  In other words, a zero block $B_0=-B_0$, if
present in $\pi$, must contain at least four elements $\{+i,+j,-i,-j\}$ of $\pm [n]$.

One can take as Coxeter generators for $W$ 
the set $S = \{s_1, . . . , s_{n-1},s_n \}$,
where 
$$
\begin{aligned}
s_i&=t^+_{i,i+1}=(i,i+1) \text{ for }i=1,2,\ldots,n-1,\text{ and }\\
s_n&=t^-_{n-1,n}=(+(n-1),-n,-(n-1),+n).
\end{aligned}
$$
The Coxeter element 
$
c=s_1 \cdots s_{n-1}s_n
$
permutes the set $\pm [n]$ as the following simultaneous
$2(n-1)$-cycle and transposition
$$
c=(+1,+2,\dots,+(n-1),-1,-2,\dots,-(n-1)) \,\, (+n,-n)
$$ 
having order $h=2(n-1)$.  Drawing the labels
$$
(+1,+2,\dots,+(n-1),-1,-2,\dots,-(n-1))
$$ 
clockwise around a circle, 
and drawing two vertices labeled $+n,-n$ at the center of this circle,
one can depict set partitions of $\pm [n]$ as before by drawing
the polygonal convex hulls of their blocks.  These pictures
will always be centrally symmetric.  Furthermore, it turns out
(see \cite[\S3]{AthanasiadisR}) that the noncrossing flats $X$ 
in $NC(W)$ again biject with set partitions $\pi$ 
for which the polygonal convex hulls of two different
blocks have no intersection of their relative interiors.
In particular, this implies that if $\pi$ has a zero block
$B_0=-B_0$, then $\{+n, -n\} \subseteq B_0$.
One bijects such a type $D_n$ noncrossing partition $\pi$ or flat $X$ 
with the element $w$ in $NC(W)=[1,c]_T$, whose associated permutation of 
$\pm [n]$ has cycles 
\begin{enumerate}
\item[$\bullet$]
coming from each {\it nonzero} block $B$ by orienting the edges of the 
polygonal convex hull of $B$ in the cyclic order 
\eqref{infinite-cyclic-order}, and
\item[$\bullet$]
if there is a zero block $B_0$ present, then
including the $2$-cycle $(+n,-n)$, as well as the
$2(|B_0|-1)$-cycle containing the elements of
$B_0 \setminus \{\pm n\}$ oriented in the
cyclic order \eqref{infinite-cyclic-order}.
\end{enumerate}
Such an element $X$ has pointwise stabilizer 
$$
W_X = \symm_{B_1} \times \cdots \times \symm_{B_\ell} \times D_{B_0}
$$
where $D_{B_0}$ is present if and only if the partition $\pi$ of $\pm [n]$
corresponding to $X$ has a zero block.
For example, these three elements $X_1, X_2, X_3$ of $NC(W)$ for $W=D_7$,
$$
\begin{aligned}
X_1&=\{ x_1=x_2=-x_5, \,\, x_3=x_4 \}\\
X_2&=\{ x_1=x_2=-x_5=-x_7, \,\, x_3=x_4 \}\\
X_3&=\{ x_1=x_2=-x_5=x_7=0, \,\, x_3=x_4 \}
\end{aligned}
$$
are equal to $V^{w_1},V^{w_2},V^{w_3}$ for these elements $w_1, w_2, w_3$ in $[1,c]_T$:
$$
\begin{aligned}
w_1&=(+1,+2,-5)(-1,-2,+5)(+3,+4)(-3,-4),\\
w_2&=(+1,+2,-7,-5)(-1,-2,+7,+5)(+3,+4)(-3,-4),\\
w_3&=(+1,+2,+5,-1,-2,-5)(+7,-7)(+3,+4)(-3,-4).
\end{aligned}
$$
Their partitions $\pi_1,\pi_2,\pi_3$ of $\pm [n]$ are depicted here
\begin{center}
\includegraphics[scale=.35]{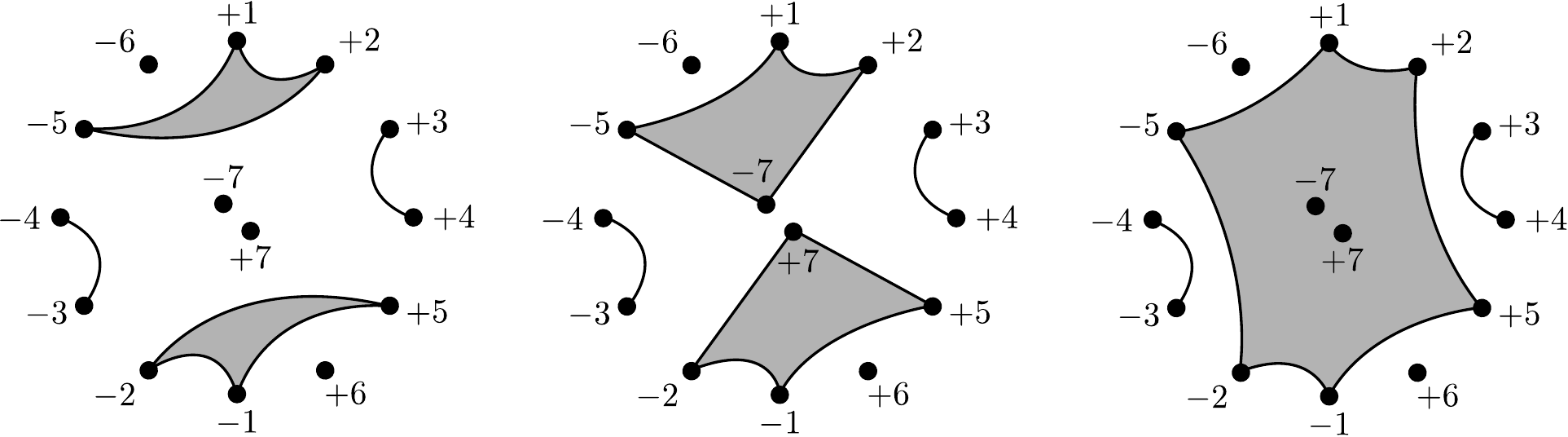}
\end{center}
and have pointwise stabilizers
$$
\begin{aligned}
W_{X_1}&=\symm_{\{+1,+2,-5\}} \times \symm_{\{+3,+4\}}\\
W_{X_2}&=\symm_{\{+1,+2,-5,-7\}} \times \symm_{\{+3,+4\}}\\
W_{X_3}&=\symm_{\{+3,+4\}} \times D_{\{+1,+2,-5,-7\}}.\\
\end{aligned}
$$
The action of $C=\langle c \rangle$
on $NC(W)=[1,c]_T$ corresponds to a clockwise rotation 
through $\frac{2 \pi}{2(n-1)}$ along with a simultaneous
swap of the labels $+n,-n$.  For example, here are the
pictures for $c(X_i)$ for $X_i$ with $i=1,2,3$ shown above:
\begin{center}
\includegraphics[scale=.35]{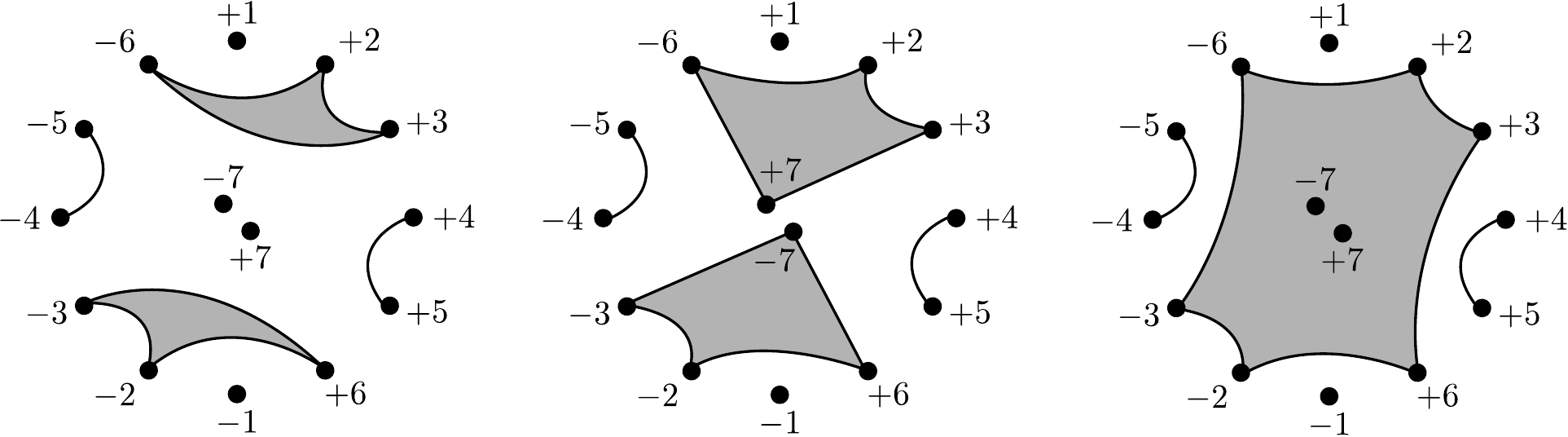}
\end{center}

As in type $B/C$, an element $[w,X]$ of $\Park^{NC}_W$ is an 
equivalence class where $X$ is a noncrossing
flat, and $w$ represents a coset $wW_X$.
One can again visualize this by labeling the blocks in the drawing
of $X$ by their images under $w$.

For example, the element $w$ in $D_7$ given by
$$
w=\left(
\begin{matrix}
+1 & +2 & +3 & +4 & +5 & +6 & +7 \\
-1 & -5 & +2 & -7 & -6 & -4 & -3 
\end{matrix}
\right)
$$
gives rise to three elements $[w,X_1], [w,X_2], [x,X_3]$ of $\Park^{NC}_{W}$
shown here:
\begin{center}
\includegraphics[scale=.35]{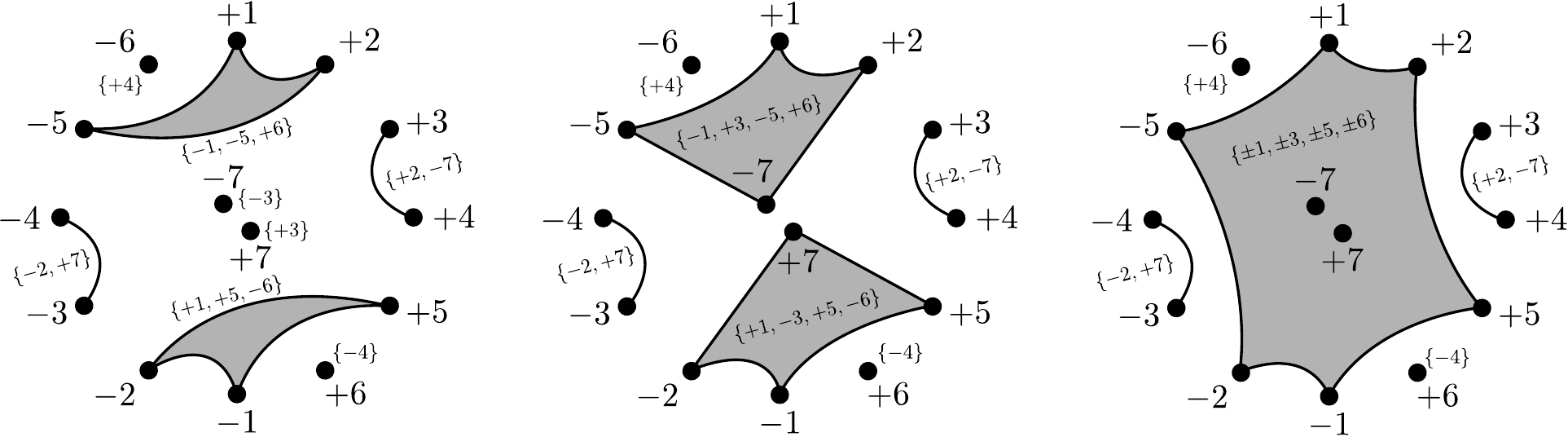}
\end{center}

\subsection{Proof of Main Conjecture (intermediate version) 
in Type D}

When $W=D_n$ acts on $V = \CC^n$,
with $x_1, \dots, x_n$ the standard coordinate functions in $V^*$,
we will choose 
$$
(\Theta)=(\theta_1,\ldots,\theta_n):=(x_1^{2n-1}, \dots, x_n^{2n-1}).
$$ 
It is easily seen that this is an hsop of 
degree $h+1$ in $\CC[V]$, and that the map
$\Theta$ sending $x_i$ to $\theta_i$ is 
$W$-equivariant, so that \eqref{strong-hsop-setup} is satisfied.
In this setting, the subvariety $V^\Theta$ cut out by the ideal
$$
\begin{aligned}
(\Theta-\xx)&=(x_1^{2n-1} - x_1, \dots, x_n^{2n-1} - x_n) \\
            &=(x_1(x_1^{2(n-1)} - 1), \dots, x_n(x_n^{2(n-1)} - 1)) \\
\end{aligned}
$$ 
consists of the points 
$(v_1, \dots, v_n) \in \CC^n$ such that each 
$v_i$ is either $0$ or a $2(n-1)^{st}$ root of unity,
that is, a power of $\omega := e^{\frac{2 \pi i}{2(n-1)}}$.
We now describe a $W \times C$-equivariant bijection 
$$
\begin{array}{rcl}
\Park^{NC}_W & \overset{f}{\longrightarrow} & V^\Theta\\
{} [w,X] &\longmapsto &v
\end{array}
$$
followed by a description of its inverse bijection.
\vskip.1in
\noindent
{\sf The forward bijection $f$.}

Given $[w,X]$, start with
the noncrossing partition $\pi$ of $\pm [n]$ corresponding
to $X$, and ignore the elements $+n,-n$ to obtain a
type $B_{n-1}$ noncrossing partition of $\pm [n-1]$.
Encode $X$ with $\pm n$ removed 
via its parenthesization of the string
$$
\cdots, -(n-1), +1, +2, \cdots, +(n-2), +(n-1), -1, -2, \cdots -(n-1), +1, +2, \cdots
$$
As before, every nonzero block will have an opener, $+j$ or $-j$,
and note that this opener will lie in the set $\pm [n-1]$.

The map $f$ then sends $[w,X]$ to the vector $v$ whose
$k^{th}$ coordinate $v_k$ depends upon the unique block $B_{i(k)}$ of $\pi$
for which $+k$ lies in $w(B_{i(k)})$:  
\begin{enumerate}
\item[$\bullet$]
if $B_{i(k)}$ is the zero block of $\pi$, then $v_k=0$, 
\item[$\bullet$]
if $B_{i(k)}$ is a singleton block of either form $\{+n\}$ or
$\{-n\}$, then $v_k=0$, 
\item[$\bullet$]
if $B_{i(k)}$ is a nonzero block, with positive opener $+j$,
then $v_k=+\omega^j$, 
\item[$\bullet$]
if $B_{i(k)}$ is a nonzero block, with negative opener $-j$,
then $v_k=-\omega^j$.
\end{enumerate}
Continuing the example with $n=7$ and
$[w,X_1],[w,X_2],[w,X_3]$ and $\pi$ from before,
one has parenthesizations corresponding to $X_1,X_2,X_3$ with $\pm 7$ removed
$$
\begin{aligned}
X_1  \leftrightarrow
 &\cdots (-5(-6)+1+2)(+3+4)(+5(+6)-1-2)(-3-4)(-5(-6)+1+2) \cdots \\
X_2  \leftrightarrow
 &\cdots (-5(-6)+1+2)(+3+4)(+5(+6)-1-2)(-3-4)(-5(-6)+1+2) \cdots \\
X_3  \leftrightarrow
 &\cdots -5(-6)+1+2(+3+4)+5(+6)-1-2(-3-4)-5(-6)+1+2 \cdots
\end{aligned}
$$
from which one can compute
$$
\begin{array}{rcccccccl}
     (&v_1,&v_2,&v_3,&v_4,&v_5,&v_6,&v_7&) \\
     & & & & & & & & \\
f([w,X_1])=
      (&+\omega^5,&+\omega^3,&0,&-\omega^6,&+\omega^5,&-\omega^5,&-\omega^3&) \\
f([w,X_2])=
      (&+\omega^5,&+\omega^3,&-\omega^5,&-\omega^6,&+\omega^5,&-\omega^5,&-\omega^3&) \\
f([w,X_3])=
      (&0,&+\omega^3,&0,&-\omega^6,&0,&0,&-\omega^3&). \\
\end{array}
$$
We leave it to the reader to check that $f$ 
is $W \times C$-equivariant. 

\vskip.1in
\noindent
{\sf The inverse bijection $f^{-1}$.}

Given $v$ in $V^\Theta$, we determine $f^{-1}(v)=[w,X]$ based on three cases
for the number of coordinates $j$ with $v_j=0$.

{\sf Case 1.}  There is a unique coordinate $v_{j_0}=0$.

In this case, by the definition of $f$, the partition $\pi$ of $\pm [n]$
corresponding to $X$ has no zero block, and has $\{+n\}, \{-n\}$ as
singleton blocks, with either $w(+n)=+j_0$ or $w(+n)=-j_0$.
Treat the remaining $n-1$ coordinates $\hat{v}$ of $v$, which are all nonzero,
as a type $B_{n-1}$ element of $V^\Theta$, and use this to recover
the other blocks $\hat{\pi}$ beside $\{+n\},\{-n\}$ of $\pi$, along
with their labeling by $\hat{w}$, taking values
in $[n] \setminus \{j_0\}$.  Finally, one has $w(+n)=+j_0$ or $w(+n)=-j_0$
depending upon whether the number of negative values taken by
$\hat{w}$ is even or odd.

{\sf Case 2.}  The number of coordinates $j$ with $v_j=0$ is $z \geq 2$.

In this case, by the definition of $f$, the element $X$ in $NC(W)$
must have a zero block containing $z$ coordinates, and one of them must
be the coordinate $x_n$.
Note that $v$ has length $n$, and its multiplicities $m_j$ of $\pm \omega^j$ 
give a composition $(m_1,m_2,\ldots,m_{n-1})$, whose sum is $n-z \leq n-2$.
Thus these multiplicities 
can be used as in the type $B$ inverse bijection to recover
the blocks of a type $B_{n-1}/C_{n-1}$-noncrossing parking function
and labeling of its blocks.
Each of its nonzero blocks $B$ will be labeled by a set 
of the same cardinality as $B$, but its zero block $B_0 \subseteq \pm [n]$ 
will include only $z-1$ coordinates, and be labeled by a set of coordinates
of size $z$ (namely those $j$ for which $v_j=0$).  Augmenting this zero block $B_0$
to include the extra coordinate $x_n$ recovers the correct $[w,X] \in \Park^{NC}_W$.

{\sf Case 3.}  Every coordinate of $v$ is nonzero.
In this case, by the definition of $f$, the element $X$ in $NC(W)$ must have
only nonzero blocks, one of which will be of size at least two and 
contain $+n$; call it $B$, so that $-B$ will contain $-n$.  
Because the multiplicities $(m_1,\ldots,m_{n-1})$ satisfy
$\sum_{j=1}^n m_j = n$, one knows that, for each $j$ having $m_j \geq 2$, 
the adjusted multiplicities
$(m_1,\ldots,m_{j-1},m_j-1,m_{j+1},\ldots,m_{n-1})$ will sum to $n-1$,
and therefore recover a valid type $B_{n-1}/C_{n-1}$ noncrossing
partition $\hat{X}$.  One can check that {\it exactly one} choice of $j$ (call it $j_0$)
has the property that if one adds in one of the central vertices $\pm n$ to the
block opened by $+j_0$, the picture remains noncrossing for type $D_n$.  Once
we know whether to add $+n$ or $-n$ to this block opened by $+j_0$, 
we will uniquely define an
element $X$ in $NC(W)$, and the actual
coordinates of $v$ let one recover the labeling of the blocks of $X$ to get
$[w,X]$.  But now this determines whether $+n$ or $-n$ should go in the
block opened by $j_0$, since only one of the two choices will make $w$ have
an even number of negative signs.

\section{Proof of Main Conjecture (weak version) in type $A$}
\label{type-A-proof-section}

Recall that the weak version of the Main Conjecture asserts the
isomorphism $\Park^{alg}_W \cong \Park^{NC}_W$
as $W \times C$-representations.
As mentioned in the Introduction,
Proposition~\ref{algebraic-character-prop} lets one rephrase this
as an equality of $W \times C$-character values:
for every $u$ in $W$ and every integer $\ell$,
\begin{equation}
\label{desired-character-equality}
\begin{array}{ccc}
\chi_{\Park^{alg}_W}(u,c^\ell) & = & \chi_{\Park^{NC}_W}(u,c^\ell) \\
\| & & \| \\
(h+1)^{\mathrm{mult}_u(\omega^\ell)} &  & \left| \left( \Park^{NC}_W \right)^{(u,c^\ell)} \right| 
\end{array}
\end{equation}
where $\omega := e^{{2 \pi i}{h}}$ as usual,
and $\mathrm{mult}_u(\omega^\ell)$ is the multiplicity
of $\omega^\ell$ as an eigenvalue of $u$ acting on $V$.

Note that when $c^\ell=1$, but $u$ varies over all of $W$,
equality in \eqref{desired-character-equality} is equivalent to the isomorphism 
$\Park^{alg}_W \cong \Park^{NC}_W$ of $W$-representations,
already known case-by-case, and in particular, known for type $A$; see 
Sections~\ref{coincidence-of-perm-reps-section}, \ref{park-alg-defn-section}.
Consequently, to prove \eqref{desired-character-equality},
it only remains to consider the case where $c^\ell \neq 1$.
In this case, $c^\ell$ has multiplicative order 
$
d:=\frac{n}{\gcd(\ell,n)} \geq 2,
$
and we will denote 
$
\hat{n}:=\frac{n}{d},
$
so that 
$
\langle c^\ell \rangle=\langle c^{\hat{n}} \rangle.
$

Recall that in type $A_{n-1}$, one has $W=\symm_n$, 
and we have chosen as Coxeter element the $n$-cycle $c=(1,2,\ldots,n)$.
For this choice, the combinatorial model 
for the parking functions $[w,X]$ in $\Park^{NC}_W$
was discussed in Example~\ref{type-A-pictorial-example}:
one has $X$ corresponding to a noncrossing set partition $\pi=\{B_1,B_2,\ldots\}$ 
of $[n]$, and $w$ gives a labeling of each of its blocks $B_i$ by a set $w(B_i)$
of the same cardinality $|w(B_i)|=|B_i|$, giving 
another set partition $\pi=\{w(B_1),w(B_2),\ldots\}$ of $[n]$.

It is easily checked that a permutation $u$ acting
on $V$ has $\omega^\ell \neq 1$ as an eigenvalue with multiplicity
$\mathrm{mult}_u(\omega^\ell)$ equal to
$$
r_d(u):= | \{ \text{cycles of }u\text{ whose size is divisible by }d \}|.
$$
Thus the equality \eqref{desired-character-equality}
to be proven can be rephrased as follows:  one must show that,
for every permutation $u$ in $\symm_n$ and integer $\ell$ 
with $c^\ell \neq 1$, one has
\begin{equation}
\label{rephrased-character-equality}
(n+1)^{r_d(u)} =
\left| \left( \Park^{NC}_W \right)^{(u,c^\ell)} \right|.
\end{equation}
Our approach in proving 
\eqref{rephrased-character-equality} 
will be to show both sides count the following objects.

\begin{defn} 
Let $u$ be in $\symm_n$ and $c^\ell \neq 1$, as above.
Extend the action of $c^\ell$ permuting $[n]$
to an action on $[n] \cup \{0\}$ by letting $c^\ell(0)=0$, and
say that a function $f:[n] \rightarrow [n] \cup \{0\}$
is {\sf $(u,c^\ell)$-equivariant} if 
$$
f(u(j)) = c^\ell f(j)
$$
for every $j$ in $[n]$.
Equivalently, whenever $f(j) \neq 0$, then
$f(u(j)) \equiv f(j) \bmod{n}.$
\end{defn}

Seeing that the $(u,c^\ell)$-equivariant $f$ are counted by the left side of 
\eqref{rephrased-character-equality} is easy.

\begin{proposition}
\label{easiest-equivariant-count}
For any $u$ in $\symm_n$ and $c^\ell \neq 1$,
with notation as above, $(n+1)^{r_d(u)}$ counts
the number of $(u,c^\ell)$-equivariant 
functions $f:[n] \rightarrow [n] \cup \{0\}$.
\end{proposition}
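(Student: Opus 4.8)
The plan is to exploit the fact that the equivariance relation $f \circ u = c^\ell \circ f$ is \emph{local to the cycles of} $u$, so that the count factors as a product over those cycles. First I would note that giving a $(u,c^\ell)$-equivariant function $f$ is the same as giving, for each cycle $O$ of $u$ acting on $[n]$, a function $f|_O \colon O \to [n] \cup \{0\}$ satisfying $f(u(j)) = c^\ell(f(j))$ for all $j \in O$; hence the desired count equals $\prod_O N(O)$, where $N(O)$ is the number of such $f|_O$. It then remains only to evaluate $N(O)$ in terms of $k := |O|$.

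Next I would observe a dichotomy: since $c^\ell$ fixes $0$ and is injective on $[n]\cup\{0\}$, the relation $f(u(j)) = c^\ell(f(j))$ forces the value $0$ to propagate to $0$ and a value in $[n]$ to propagate to a value in $[n]$ as one moves around the cycle $O$. So either $f|_O \equiv 0$ (always exactly one valid choice), or $f|_O$ takes all its values in $[n]$. In the latter case, fixing a basepoint $j_0 \in O$, the relations force $f(u^i(j_0)) = (c^\ell)^i(f(j_0))$ for $0 \le i < k$, so $f|_O$ is completely determined by the single value $f(j_0) \in [n]$; and conversely this recipe produces a well-defined function on $O$ precisely when closing up the cycle is consistent, i.e.\ when $(c^\ell)^k$ fixes $f(j_0)$.

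The key point is then purely about the permutation $c^\ell$ of $[n]$: since $c = (1,2,\ldots,n)$, the power $(c^\ell)^k$ acts on $[n]$ as translation by $k\ell$ modulo $n$, so it fixes some (equivalently, every) element of $[n]$ if and only if $n \mid k\ell$, which by the definition of $d = n/\gcd(\ell,n)$ holds exactly when $d \mid k$. Consequently, if $d \nmid |O|$ then the only equivariant $f|_O$ is the zero function and $N(O) = 1$, whereas if $d \mid |O|$ then every choice of $f(j_0) \in [n]$ gives a valid nowhere-zero $f|_O$, so $N(O) = n + 1$ (the $n$ nonzero functions together with the zero function). Multiplying over the cycles of $u$ gives $\prod_O N(O) = (n+1)^{\#\{O \,:\, d \mid |O|\}} = (n+1)^{r_d(u)}$, as claimed. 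There is no serious obstacle here; the only step requiring care is the translation computation pinning the consistency condition to $d \mid |O|$, which is exactly what makes the exponent $r_d(u)$ rather than the total number of cycles of $u$.
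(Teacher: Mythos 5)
Your proof is correct and follows essentially the same route as the paper's: both exploit that an equivariant $f$ is determined by its value on one representative per cycle of $u$, with $n+1$ free choices when the cycle length is divisible by $d$ and only the forced value $0$ otherwise. Your write-up merely makes explicit the consistency check $(c^\ell)^{|O|}$ fixing the chosen value, which the paper leaves implicit.
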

\begin{proof}
The $(u,c^\ell)$-equivariance implies that such
a function $f$ is completely determined by its
values on one representative $j$ from each cycle of $u$.
If the cycle has size divisible by $d$, this
value $f(j)$ can be an arbitrary element of $[n] \cup \{0\}$,
giving $n+1$ choices.  If the cycle has size not divisible
by $d$ then this value $f(j)$ must be a fixed point of $c^\ell$,
that is, $f(j)=0$.
\end{proof}

The first step in showing that $(u,c^\ell)$-equivariant functions $f$
are also counted by the right side of \eqref{rephrased-character-equality}
is similar in spirit to the proof of a Stirling number identity
presented in the Twelvefold Way \cite[Chapter 1, eqn. (24d), pp. 34--35]{StanEC1}:
one classifies functions $f$ according to how their fibers $f^{-1}(j)$
partition the domain $[n]$.  To this end, make the following definition.

\begin{defn}
Given $u$ in $\symm_n$ and $d \geq 2$, say that a set partition
$\pi=\{A_1,A_2,\ldots\}$ of $[n]$ is {\sf $(u,d)$-admissible} if 
\begin{enumerate}
\item[$\bullet$]
$\pi$ is $u$-stable in the sense that  $u(\pi) = \{u(A_1), u(A_2), \ldots \} = \pi$,
so for each $A_i$ there exists some $j$ with $u(A_i)=A_j$, and
\item[$\bullet$]
at most one block $A_{i_0}$ is itself $u$-stable in the
sense that $u(A_{i_0}) =A_{i_0}$, and
\item[$\bullet$] 
all other blocks beside the $u$-stable block $A_{i_0}$ (if present)
are permuted by $u$ in orbits of length $d$, that is,
$\{ A_i, u(A_i), u^2(A_i), \ldots, u^{d-1}(A_i) \}$ are distinct,
but $u^d(A_i)=A_i$.
\end{enumerate}
\end{defn}

\begin{proposition}
\label{12-fold-way-prop}
With notation as above, 
the number of $(u,c^\ell)$-equivariant 
functions $f:[n] \rightarrow [n] \cup \{0\}$ is 
$$
\sum_{\pi} n(n-d)(n-2d) \cdots (n-(k_\pi-1)d)
$$
where $k_\pi$ is the number of $u$-orbits of blocks
of $\pi$ having length $d$.
\end{proposition}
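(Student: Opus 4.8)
\emph{Overall approach.} The plan is to prove the identity by the ``Twelvefold Way'' bookkeeping hinted at in the excerpt: classify each $(u,c^\ell)$-equivariant function $f\colon[n]\to[n]\cup\{0\}$ according to the set partition $\pi_f$ of $[n]$ formed by its nonempty fibers $f^{-1}(i)$, $i\in[n]\cup\{0\}$. First I would show that $\pi_f$ is always $(u,d)$-admissible, and then, for each fixed $(u,d)$-admissible $\pi$, count the equivariant $f$ with $\pi_f=\pi$; summing this count over all admissible $\pi$ will give the asserted formula.

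\emph{Step 1: the fiber partition is $(u,d)$-admissible.} The equivariance $f\circ u=c^\ell\circ f$ at once gives $u\bigl(f^{-1}(i)\bigr)=f^{-1}(c^\ell i)$, so $u$ sends blocks of $\pi_f$ to blocks, and the bijection (nonempty fiber)$\mapsto$(its label) intertwines the $u$-action on the blocks with the $c^\ell$-action on $[n]\cup\{0\}$; in particular it is injective on blocks. Since $c=(1,2,\dots,n)$ is a full $n$-cycle and $c^\ell\neq 1$, the permutation $c^\ell$ has no fixed point on $[n]$ and all of its cycles on $[n]$ have the common length $d$, while its only fixed point on $[n]\cup\{0\}$ is $0$. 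Hence at most one block of $\pi_f$ (namely $f^{-1}(0)$, if present) is $u$-stable, and every other block lies in a $u$-orbit of blocks of size exactly $d$. These are precisely the defining conditions of $(u,d)$-admissibility.

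\emph{Step 2: counting the equivariant $f$ with a prescribed fiber partition.} Fix a $(u,d)$-admissible $\pi$. An equivariant $f$ with $\pi_f=\pi$ is the same datum as an injective ``labelling'' $\lambda\colon\pi\to[n]\cup\{0\}$ of the blocks, with $f|_B\equiv\lambda(B)$ and $\lambda(u(B))=c^\ell\lambda(B)$. The $u$-stable block $A_{i_0}$ (if present) is forced to satisfy $\lambda(A_{i_0})=0$, since $0$ is the unique $c^\ell$-fixed point; conversely $0$ cannot label any other block, as on a size-$d$ orbit that would violate injectivity. Grouping the remaining blocks into the $k_\pi$ orbits $O_1,\dots,O_{k_\pi}$ of size $d$ and fixing a base block $A_t\in O_t$, a valid $\lambda$ amounts to a choice of $\lambda(A_1),\dots,\lambda(A_{k_\pi})\in[n]$ whose $c^\ell$-orbits (each a $d$-cycle) are pairwise disjoint: the prescription $\lambda(u^iA_t):=c^{i\ell}\lambda(A_t)$ then extends $\lambda$ over $O_t$ well-definedly (using $(c^\ell)^d=1$ and $|O_t|=d$) and injectively. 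Because the $d$-cycles of $c^\ell$ partition $[n]$ into $\hat n$ classes of size $d$, there are $n$ choices for $\lambda(A_1)$, then $n-d$ for $\lambda(A_2)$, and so on, giving $n(n-d)(n-2d)\cdots(n-(k_\pi-1)d)$ functions $f$ with $\pi_f=\pi$ (the empty product, $=1$, when $k_\pi=0$).

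\emph{Step 3 and the main difficulty.} Summing the count from Step~2 over all $(u,d)$-admissible $\pi$ — which by Step~1 are exactly those that occur — yields the formula. The one genuinely delicate point is the labelling count in Step~2: one must exploit that, since $c$ is a full $n$-cycle and $d\mid n$, every cycle of $c^\ell$ on $[n]$ has the same length $d$, so assigning a nonzero label to one block of a size-$d$ orbit consumes precisely one full $d$-cycle of available labels, and distinct orbits must consume distinct $d$-cycles — this is exactly what produces the falling-factorial-type product $n(n-d)\cdots(n-(k_\pi-1)d)$ rather than a more complicated expression, and is also why the product is always a positive integer (from $k_\pi d\le n$). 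The remaining verifications are routine unwinding of the equivariance relation.
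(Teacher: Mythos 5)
Your proposal is correct and follows essentially the same route as the paper: classify equivariant $f$ by the partition of $[n]$ into nonempty fibers, observe equivariance forces $(u,d)$-admissibility, and count labellings orbit-by-orbit with $n, n-d, n-2d, \ldots$ choices. Your Step 2 merely spells out in more detail (the forced label $0$ on the $u$-stable block, and the fact that each nonzero label consumes a full $d$-cycle of $c^\ell$) what the paper's proof leaves implicit.
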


\begin{proof}
Associate to each $(u,c^\ell)$-equivariant function $f$
a partition $\pi=\{A_1,A_2,\ldots\}$ 
of the domain $[n]$, by letting the blocks of $\pi$
be the nonempty fibers $f^{-1}(j)$.
Note that equivariance forces $\pi$ to be
$(u,d)$-admissible.  Note also that $\pi$ contains
a $u$-stable block $A_{i_0}=f^{-1}(0)$ 
if and only if $f$ takes on the value $0$.

On the other hand, if one fixes a $(u,d)$-admissible
partition $\pi$ of $[n]$, one can count how many
$(u,c^\ell)$-equivariant $f$ are associated to it as follows.  
If $\pi$ has a $u$-stable block $A_{i_0}$ then
set $f(A_{i_0})=0$;  otherwise $f$ does not take on
the value $0$.  To determine the rest of $f$,
choose representative blocks $A_1,A_2,\ldots,A_{k_\pi}$ 
from each of the $u$-orbits of blocks
of $\pi$ having length $d$.  After choosing 
the value $f(A_1)=j_1$ from the $n$ choices in $[n]$, 
this forces (working modulo $n$) that
$$
\begin{aligned}
f(u(A_1))&=j_1+\ell,\\
f(u^2(A_1))&=j_1+2\ell,\\
\vdots\\
f(u^{d-1}(A_1))&=j_1+(d-1)\ell.
\end{aligned}
$$
This then leaves $n-d$ choices for $f(A_2)=j_2$, forcing
$$
\begin{aligned}
f(u(A_2))&=j_2+\ell,\\
f(u^2(A_2))&=j_2+2\ell,\\
\vdots\\
f(u^{d-1}(A_2))&=j_2+(d-1)\ell.
\end{aligned}
$$
There are then $n-2d$ choices for $f(A_3)=j_3$, etc.
\end{proof}

\noindent
Equality \eqref{rephrased-character-equality} now follows
from Propositions~\ref{easiest-equivariant-count},
\ref{12-fold-way-prop} and the next proposition.

\begin{proposition}
Fix $u$ in $\symm_n$ and $c^\ell \neq 1$, with notations as above.

If $[w,X]$ in $\Park^{NC}_W$ is fixed by $(u,c^\ell)$,
and has 
$
X=\{B_1,B_2,\ldots\},
$ 
then the associated set partition
$
\pi=\{A_1,A_2,\ldots\}
$
of $[n]$ having blocks $A_i:=w(B_i)$ will be $(u,d)$-admissible.

Conversely, fixing a $(u,d)$-admissible partition $\pi$, there will be exactly
$$
n(n-d)(n-2d) \cdots (n-(k_\pi-1)d)
$$
$[w,X]$ in $\Park^{NC}_W$ fixed by $(u,c^\ell)$ that are associated to $\pi$
in this way.
\end{proposition}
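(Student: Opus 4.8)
The plan is to deduce both halves from the bookkeeping already carried out in Propositions~\ref{easiest-equivariant-count} and~\ref{12-fold-way-prop}, after unwinding the fixed-point condition. As in Example~\ref{type-A-pictorial-example}, write $X=\{B_1,B_2,\dots\}$ for the noncrossing partition and put $A_i:=w(B_i)$. Then $(u,c^\ell).[w,X]=[w,X]$ in $\Park^{NC}_W$ is equivalent to the conjunction of $c^\ell(X)=X$ --- equivalently, $\{B_i\}$ is invariant under the rotation group $\langle c^\ell\rangle=\langle c^{\hat{n}}\rangle$ of order $d$ --- together with $u(A_i)=A_{\sigma(i)}$, where $\sigma$ is the permutation of indices determined by $c^\ell(B_i)=B_{\sigma(i)}$. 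In particular $\pi=\{A_i\}$ is $u$-stable, and under the bijection $B_i\leftrightarrow A_i$ the $u$-action on blocks of $\pi$ is exactly the $\langle c^\ell\rangle$-action on blocks of $X$, with $B_i$ central (fixed by the rotation group) if and only if $A_i$ is $u$-fixed.

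For the forward assertion it then suffices to record one geometric fact: a noncrossing partition of $[n]$ invariant under a nontrivial rotation of order $d$ has at most one rotation-invariant block, and all other blocks lie in free orbits of size $d$. Indeed, a rotation-invariant block has its convex hull fixed setwise by a nontrivial rotation, so its centroid --- which lies in the hull --- must be the fixed point of that rotation, namely the origin; two disjoint hulls cannot both contain the origin, so there is at most one such block, while any other block has a hull avoiding the origin, hence trivial rotation-stabilizer, hence a $\langle c^{\hat{n}}\rangle$-orbit of size exactly $d$. Transporting this to $\pi$ via the identifications of the previous paragraph shows that $\pi$ has at most one $u$-fixed block and all others in $u$-orbits of size $d$; that is, $\pi$ is $(u,d)$-admissible, with $k_\pi$ equal to the number of $\langle c^\ell\rangle$-orbits of size $d$ among the blocks of $X$.

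For the counting assertion, fix a $(u,d)$-admissible $\pi$. By Proposition~\ref{12-fold-way-prop}, the number $n(n-d)\cdots(n-(k_\pi-1)d)$ counts the $(u,c^\ell)$-equivariant functions $f:[n]\to[n]\cup\{0\}$ with fiber partition $\pi$, so it is enough to biject those $f$ with the $(u,c^\ell)$-fixed $[w,X]$ attached to $\pi$. The device is the rotation-equivariant analogue, for $d$-fold symmetric noncrossing partitions of $[n]$, of the opener/parenthesization encoding of Reiner used in type $B/C$ in Section~\ref{type-BC-proof-section}: a $\langle c^{\hat{n}}\rangle$-invariant noncrossing partition is encoded by assigning to each non-central block a distinct opener in $[n]$ together with its size, subject only to compatibility with the rotation, and conversely any such system reconstructs one by inserting blocks smallest-first, each claiming the next available positions read clockwise from its opener, the leftover positions forming the central block if present. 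Given a fixed $[w,X]$, one sends it to the $f$ that equals the opener of $B_i$ on $A_i$ for each non-central $B_i$ and equals $0$ on the central block; equivariance of $f$ follows from $c^\ell(B_i)=B_{\sigma(i)}$, $u(A_i)=A_{\sigma(i)}$ and equivariance of the opener, and the fibers of $f$ are exactly the $A_i$ since distinct blocks carry distinct openers. Conversely, from an equivariant $f$ with fiber partition $\pi$ one reads off the constant value $p_i$ of $f$ on each non-central block $A_i$; equivariance forces $p_{\sigma(i)}=c^\ell(p_i)$, so $(p_i,|A_i|)$ is a rotation-compatible system of openers-and-sizes, reconstructing a $\langle c^{\hat{n}}\rangle$-invariant noncrossing partition $X$, and setting $w(B_i):=A_i$ produces a fixed $[w,X]$. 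The two constructions are visibly mutually inverse, which gives the claimed count.

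The main obstacle will be the encoding lemma itself in the $d$-fold symmetric setting: establishing that an \emph{arbitrary} rotation-compatible system of distinct openers-with-sizes reconstructs a genuine, and genuinely noncrossing, partition --- with no hidden constraints beyond distinctness of the openers, exactly as in type $B/C$, where the insertion of consecutive-among-available blocks automatically yields a laminar (hence noncrossing) family --- and that this reconstruction is a two-sided inverse to reading off openers. The delicate point is the central block: one must verify that the positions left over after all non-central blocks have been inserted always constitute a single rotation-invariant block that is noncrossing with respect to everything already placed.
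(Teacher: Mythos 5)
Your first half is essentially the paper's argument: unwind $(u,c^\ell).[w,X]=[w,X]$ into $c^\ell(X)=X$ plus $u(A_i)=A_{\sigma(i)}$, and use the structure of a $d$-fold symmetric noncrossing partition (at most one rotation-invariant block, all others in free orbits of size $d$); your centroid argument is a fine way to justify that structure, and your identification $k=k_\pi$ matches the paper. The problem is the second half. Your count rests entirely on the ``encoding lemma'' that a rotation-compatible system of distinct openers-with-sizes reconstructs a unique $d$-fold symmetric noncrossing partition of $[n]$, with the leftover positions forming a single rotation-invariant central block --- and you explicitly leave this unproved (``the main obstacle will be the encoding lemma itself''). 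That lemma is not a routine transfer of the type $B/C$ parenthesization: in the cyclic type $A$ setting one must first give a canonical, $C$-equivariant definition of ``opener'' (there is no distinguished cut point as in the $\pm[n]$ string), and then prove existence, uniqueness, noncrossingness, and the central-block claim for an \emph{arbitrary} admissible system. Note that this lemma is essentially equivalent in strength to the enumeration it is meant to replace: the number of rotation-compatible opener systems of type $\mu$ is $\hat{n}(\hat{n}-1)\cdots(\hat{n}-(k-1))/(\mu_1!\cdots\mu_{\hat{n}}!)$, which is exactly the Athanasiadis count \cite[Theorem 2.3]{Athanasiadis3} that the paper invokes and then combines with a $\mu_j!$-matching and $d^k$-alignment argument to get $n(n-d)\cdots(n-(k_\pi-1)d)$. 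So as written your proof of the counting assertion is not complete; it defers precisely the step that carries the content.

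Granting the lemma, your route would be a genuinely different (and attractive) proof: it bijects the fixed points $[w,X]$ attached to $\pi$ directly with the $(u,c^\ell)$-equivariant functions having fiber partition $\pi$, whose count per $\pi$ is already supplied by the proof of Proposition~\ref{12-fold-way-prop}, thereby giving a bijective refinement of the character identity rather than a multiplication of two enumerations. But to stand on its own it needs the reconstruction lemma proved (or replaced by a citation doing equivalent work, which is what the paper does).
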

\begin{proof}
For the first assertion, note that 
$$
[w,X]=(u,c^\ell)[w,X]=[uwc^{-\ell},c^\ell(X)]
$$
implies, in particular, that $c^\ell(X)=X$, so $X$ is a 
noncrossing partition of $[n]$ that has $d$-fold rotational
symmetry.  This means that $X$ can have at most one $d$-fold
symmetric block $B_{i_0}$ (if any at all), and all of its other
blocks come in $d$-fold rotation orbits of length $d$.
The fact that $uwc^{-\ell} W_X=wW_X$ then means
that the partition $\pi=\{A_1,A_2,\ldots\}$ defined by
$A_i:=w(B_i)$ is $(u,d)$-admissible.  In fact, one has
that the $d$-fold symmetric block of $X$ (if present)
gives the unique $u$-stable block $A_{i_0}=w(B_{i_0})$ of $\pi$
(if present).  Also, if one chooses representatives
$B_1,B_2,\ldots,B_k$ for the $d$-fold rotation orbits
of length $d$ among the blocks of $X$, then
their corresponding $w$-images 
$A_1,A_2,\ldots,A_k$ give representatives for the
$u$-orbits of $\pi$ of length $d$.  In particular, one must
have $k=k_\pi$.

For the second assertion, define the {\sf type} of 
a $(u,d)$-admissible partition $\pi$ of $[n]$ to be the
sequence $\mu = (\mu_1,\mu_2,\ldots,\mu_{\hat{n}})$
where $\pi$ has $\mu_j$ different $u$-orbits of blocks
of length $d$ in which the blocks have size $j$.
Similarly define the type $\mu$ of
a $d$-fold symmetric noncrossing partition $X$ of $[n]$ 
to mean that $X$ has $\mu_j$ different $d$-fold rotation orbits of 
length $d$ consisting of blocks having size $j$.
When $\pi$ is associated to $X$ as above, note that
they have the same type $\mu$.  

Having fixed a $(u,d)$-admissible partition $\pi$,
say of type $\mu$, one can count how many  
$[w,X]$ in $\Park^{NC}_W$ fixed by $(u,c^\ell)$ will be associated to it
in the following way.
A result of Athanasiadis \cite[Theorem 2.3]{Athanasiadis3}
counts the number of
$d$-fold symmetric\footnote{Actually, he counts
the {\bf centrally-}symmetric noncrossing partitions $X$ of type $\mu$,
but these have an easy bijection to those which are $d$-fold
symmetric, for any $d \geq 2$.} 
noncrossing partitions $X$ of $[n]$ having type $\mu$ as
\begin{equation}
\label{Athanasiadis-formula}
\frac{\hat{n} (\hat{n}-1) (\hat{n}-2) \cdots (\hat{n}-(k-1)) }
      {\mu_1 ! \mu_2 ! \cdots \mu_{\hat{n}}!}.
\end{equation}
For each such $X$, and for each fixed block size $j$,
the permutation $w$ must match the $\mu_j$ different
$d$-fold rotation orbits having blocks of size $j$
\begin{equation}
\label{c-orbit-in-X}
(B, c^\ell(B), c^{2\ell}(B),\ldots,c^{(d-1)\ell}(B) )
\end{equation}
with the $\mu_j$ different $u$-orbits of $\pi$ having blocks of
size $j$
\begin{equation}
\label{u-orbit-in-pi}
(A, u(A), u^2(A),\ldots,u^{d-1}(A) ).
\end{equation}
There are $\mu_j !$ ways to do this matching.  Having picked
such a matching, for each of the $k=\mu_1+\mu_2+\cdots+\mu_{\hat{n}}$ 
different matched orbit pairs as in \eqref{c-orbit-in-X},
\eqref{u-orbit-in-pi}, $w$ has $d$ choices for how to align them
cyclically:  $w$ specifies which of the $d$ possible sets 
$\{ A, u(A), u^2(A), \ldots,u^{d-1}(A)\}$ will be the image $w(B)$.

Hence there are $d^k \mu_1 ! \mu_2 ! \cdots \mu_{\hat{n}}!$
ways to choose the image sets $w(B_i)$ after making the
choice of $X=\{B_1,B_2,\ldots\}$.  Together with \eqref{Athanasiadis-formula},
this gives the desired count:
$$
d^k  \mu_1 ! \mu_2 ! \cdots \mu_{\hat{n}}!
\cdot
\frac{\hat{n} (\hat{n}-1) (\hat{n}-2) \cdots (\hat{n}-(k-1)) }
      {\mu_1 ! \mu_2 ! \cdots \mu_{\hat{n}}!} 
=n (n-d) (n-2d) \cdots (n-(k-1)d).
$$
\end{proof}

\section{Narayana and Kirkman polynomials}
\label{Narayana-Kirkman-section}

After proving a statement
equivalent to Corollary~\ref{Narayana-Kirkman-corollary}
on the Kirkman numbers for $W$, we explain how 
calculations of Gyoja, Nishiyama, and Shimura \cite{GNS} give
product formulas for the Kirkman numbers in classical types, and even their graded
$q$-analogues.

\subsection{Proof of Corollary~\ref{Narayana-Kirkman-corollary}}

The following result is an equivalent version of
Corollary~\ref{Narayana-Kirkman-corollary};
see the discussion in Section~\ref{third-consequence-section}.

\begin{proposition}
\label{Narayana-Kirkman-corollary-equivalent}
For $W$ an irreducible real reflection group, letting
$\Park(W)$ denote either of the equivalent $W$-representations
$\Park^{NC}_W \cong \Park^{alg}_W$, one has
$$
\sum_{k=0}^n \langle \,\, \chi_{\wedge^k V} \,\ 
              , \,\, \chi_{\Park(W)} \,\, \rangle_W \cdot (t-1)^k
= \sum_{X \in NC(W)} t^{\dim_\CC X}.
$$
\end{proposition}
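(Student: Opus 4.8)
The plan is to prove this identity directly from the definition of $\Park^{NC}_W$, using only its structure as a $W$-representation (which, by \eqref{string-of-isomorphisms}, is the same whether one takes $\Park(W)=\Park^{NC}_W$ or $\Park(W)=\Park^{alg}_W$). Starting from the decomposition \eqref{definitional-permutation-reps}, namely $\Park^{NC}_W \cong \bigoplus_{X \in NC(W)} \CC[W/W_X] \cong \bigoplus_{X \in NC(W)} \Ind_{W_X}^W \triv_{W_X}$, Frobenius reciprocity gives
\[
\langle \chi_{\wedge^k V}, \chi_{\Park(W)} \rangle_W
= \sum_{X \in NC(W)} \langle \Res_{W_X} \wedge^k V, \triv_{W_X} \rangle_{W_X}
= \sum_{X \in NC(W)} \dim_\CC \left( \wedge^k V \right)^{W_X}.
\]
So the problem reduces to computing, for each noncrossing flat $X$, the dimension of the $W_X$-invariant subspace of $\wedge^k V$.

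For this local computation I would use that $W_X$ is a parabolic subgroup with fixed space $V^{W_X}=X$, so that $V = X \oplus X^{\perp}$ as $W_X$-modules, with $W_X$ acting trivially on $X$ and acting on $X^{\perp}$ as an essential real reflection group (one with no nonzero fixed vector). Taking invariants in $\wedge^k V = \bigoplus_{i+j=k} \wedge^i X \otimes \wedge^j X^{\perp}$, and using that each $\wedge^i X$ is a trivial $W_X$-module of dimension $\binom{\dim_\CC X}{i}$, yields $\left(\wedge^k V\right)^{W_X} = \bigoplus_{i+j=k} \wedge^i X \otimes \left(\wedge^j X^{\perp}\right)^{W_X}$. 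The crux is the claim that $\left(\wedge^j X^{\perp}\right)^{W_X}=0$ for $j \geq 1$ while $\left(\wedge^0 X^{\perp}\right)^{W_X}=\CC$; granting this, $\left(\wedge^k V\right)^{W_X} \cong \wedge^k X$ has dimension $\binom{\dim_\CC X}{k}$, and hence $\langle \chi_{\wedge^k V}, \chi_{\Park(W)} \rangle_W = \sum_{X \in NC(W)} \binom{\dim_\CC X}{k}$.

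To establish the crux I would reduce to the irreducible case: $W_X$ is a direct product $W_{I_1} \times \cdots \times W_{I_r}$ of irreducible real reflection groups, with $X^{\perp}$ the corresponding direct sum $V_1 \oplus \cdots \oplus V_r$ of their reflection representations, so that $\wedge^j X^{\perp}$ is a sum of external tensor products $\wedge^{a_1} V_1 \boxtimes \cdots \boxtimes \wedge^{a_r} V_r$ with $a_1 + \cdots + a_r = j$; such a summand contains the trivial $W_X$-module if and only if each factor is trivial, and for an \emph{irreducible} real reflection group the exterior powers $\wedge^0 V_\ell, \wedge^1 V_\ell, \dots$ of its reflection representation are irreducible and pairwise inequivalent with only $\wedge^0 V_\ell$ trivial (this is the cited \cite[Theorem 5.1.4]{GeckPfeiffer}), forcing all $a_\ell = 0$. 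Finally, substituting into the left-hand side and interchanging the two sums,
\[
\sum_{k=0}^n \langle \chi_{\wedge^k V}, \chi_{\Park(W)} \rangle_W (t-1)^k
= \sum_{X \in NC(W)} \sum_{k=0}^n \binom{\dim_\CC X}{k} (t-1)^k
= \sum_{X \in NC(W)} t^{\dim_\CC X}
\]
by the binomial theorem, which is the assertion; specializing $t \mapsto t+1$ then recovers Corollary~\ref{Narayana-Kirkman-corollary}, since $\Kirk_W(t) = \Nar_W(t+1)$. I expect the only step that is not a formal character manipulation to be the invariant-theoretic crux on exterior powers of the (possibly reducible) reflection representation of $W_X$; once that is in hand, everything else is bookkeeping.
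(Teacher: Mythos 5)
Your proof is correct and follows essentially the paper's own route: Frobenius reciprocity applied to the orbit decomposition $\bigoplus_{X \in NC(W)} \Ind_{W_X}^W \triv_{W_X}$, combined with the key invariant-theoretic fact that the positive exterior powers of the reflection representation of $W_X$ (on a complement of $X$) contain no $W_X$-invariants. The only differences are bookkeeping — the paper packages the exterior characters as $\det(1+(t-1)w) = t^{\dim_\CC X}\det\left(1+(t-1)(w/X)\right)$ rather than computing $\dim_\CC(\wedge^k V)^{W_X}=\binom{\dim_\CC X}{k}$ and invoking the binomial theorem — and your reduction of the crux to the irreducible factors of $W_X$ via \cite[Theorem 5.1.4]{GeckPfeiffer} is, if anything, slightly more careful than the paper's direct citation of the Bourbaki exercise for possibly reducible $W_X$.
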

\begin{proof}
Note that 
$$
\begin{aligned}
\langle \,\, \chi_{\wedge^k V} \,\ 
              , \,\, \chi_{\Park(W)} \,\, \rangle_W 
&=\sum_{X \in NC(W)} 
\langle \,\, \chi_{\wedge^k V} \,\ 
              , \,\ \Ind_{W_X}^W \triv_{W_X} \,\, \rangle_W  \\
&=\sum_{X \in NC(W)} 
\langle \,\, \Res^W_{W_X} \chi_{\wedge^k V} \,\ 
              , \,\ \triv_{W_X} \,\, \rangle_{W_X}  \\
\end{aligned}
$$
using \eqref{definitional-permutation-reps} and Frobenius Reciprocity.
For purposes of later multiplying this by $(t-1)^k$ and summing on $k$,
note that when $w$ acts on $V$ and on $\wedge^k V$, 
one has
\begin{equation}
\label{exterior-powers-in-characteristic-polynomial}
\sum_{k=0}^n t^k \chi_{\wedge^k V}(w) = \det(1+tw).
\end{equation}
However, note that for $w$ that happen to lie in the pointwise
stabilizer $W_X$ of $X$ inside $W$,
there will be $\dim_\CC X$ extra
$+1$-eigenvalues for $w$ when it is considered as an element acting on $V$, 
rather than as an element $w/X$ of $W_X$ acting on $V/X$.
Hence
$$
\begin{aligned}
\sum_{k=0}^n (t-1)^k \chi_{\wedge^k V}(w) 
&= \det(1+(t-1)w) \\ 
&= (1+(t-1))^{\dim_\CC X} \det\left(1+(t-1)(w/X)\right) \\
&= t^{\dim_\CC X} \sum_{k=0}^n (t-1)^k \chi_{\wedge^k(V/X)}(w/X) 
\end{aligned}
$$
and therefore
$$
\begin{aligned}
&\sum_{k=0}^n (t-1)^k 
    \langle \,\, \chi_{\wedge^k V} \,\ 
              , \,\, \chi_{\Park(W)} \,\, \rangle_W 
=\sum_{X \in NC(W)} 
   \sum_{k=0}^n (t-1)^k 
   \langle \,\, \Res^W_{W_X} \chi_{\wedge^k V} \,\ 
              , \,\ \triv_{W_X} \,\, \rangle_{W_X}  \\
&= \sum_{X \in NC(W)} t^{\dim_\CC X} 
     \sum_{k=0}^n (t-1)^k 
   \langle \,\, \chi_{\wedge^k(V/X)}(w/X) \,\ 
              , \,\ \triv_{W_X} \,\, \rangle_{W_X}  
= \sum_{X \in NC(W)} t^{\dim_\CC X} 
\end{aligned}
$$
where the very last equality uses the fact that
the $W_X$-representations $\{ \wedge^k(W/X) \}$
for $k=0,1,2,\ldots\dim W/X$ are 
inequivalent $W_X$-irreducibles \cite[Chap. 5, \S 2, Exer. 3]{Bourbaki},
with $\wedge^0(W/X) = \triv_{W_X}$.
\end{proof}

\subsection{Formulas for Kirkman and $q$-Kirkman numbers}

Define the {\it Kirkman} and {\it $q$-Kirkman numbers} for a real irreducible
reflection $W$ acting on $V=\CC^n$ and $0 \leq k \leq n$ by
\begin{equation}
\label{q-Kirkman-definition}
\begin{aligned}
\Kirk(W,k)&:=
 \langle \,\, \chi_{\wedge^k V} \,\ 
              , \,\, \chi_{\Park(W)} \,\, \rangle_W  \\
\Kirk(W,k;q)&:=
\sum_{d \geq 0}
 \langle \,\, \chi_{\wedge^k V} \,\ 
              , \,\, \chi_{\left( \CC[V]/(\Theta) \right)_d } \,\, \rangle_W \cdot q^d.
\end{aligned}
\end{equation}
so that $\left[ \Kirk(W,k;q)\right]_{q=1}=\Kirk(W,k)$ has the combinatorial
interpretation from cluster theory given by 
Corollary~\ref{Narayana-Kirkman-corollary}.  One calculates them via
\begin{equation}
\label{q-Kirkman-as-intertwiner}
\begin{aligned}
\sum_{k=0}^n \Kirk(W,k;q) t^k 
  &=  \left\langle \,\, \sum_{k=0}^n \chi_{\wedge^k V} t^k  \,\ 
              , \,\, \sum_{d \geq 0} \chi_{\left( \CC[V]/(\Theta) \right)_d } q^d \,\, \right\rangle_W  \\
&=\frac{1}{|W|} \sum_{w \in W} \frac{\det(1+tw) \det(1-q^{h+1}w)}{\det(1-qw)}
\end{aligned}
\end{equation}
using \eqref{exterior-powers-in-characteristic-polynomial} 
and Proposition~\ref{algebraic-character-prop}.
We wish to record here some more explicit product formulas for these numbers
using character calculations due to Gyoja, Nishiyama, and Shimura \cite{GNS}.  
To state these, first recall the $q$-number
$
[n]_q := 1 + q + \cdots + q^{n-1}, 
$
the $q$-factorial 
$[n]!_q:= [1]_q [2]_q \cdots [n]_q$,
and the $q$-binomial
$
\qbin{n}{k}{q} :=\frac{[n]!_q}{[k]!_q [n-k]!_q}.
$

\begin{proposition}
\label{q-Kirkman-formulas}
For $W$ an irreducible real reflection group acting on $V=\CC^n$, 
with root system $\Phi$, positive roots $\Phi^+$,
fundamental degrees $d_1 \leq \ldots \leq d_n$, and Coxeter number $h:=d_n$, one has
\begin{equation}
\label{det-and-triv-q-Kirkman}
\Kirk(W,n;q) = q^{|\Phi^+|} 
\qquad \text{ and } \qquad
\Kirk(W,0;q) = \prod_{i=1}^n \frac{ [h+d_i]_q }{ [d_i]_q } 
=: \Cat(W,q) 
\end{equation}
as well as these explicit formulas for $\Kirk(W,k;q)$ in the classical
infinite families
\begin{center}
\begin{tabular}{|c|c|}
\hline
$W$ & $\Kirk(W,k;q)$ \\ \hline\hline
    & \\
$A_{n-1} (= \symm_n)$ & $\frac{q^{\binom{k+1}{2}}}{[n]_q} \qbin{n}{k}{q} \qbin{2n-k}{n-k-1}{q}$ \\
    & \\ \hline
    & \\
$B_n = C_n$ & $q^{k^2} \qbin{n}{k}{q^2} \qbin{2n-k}{n-k}{q^2}$ \\ 
    & \\ \hline
    & \\
$D_n$ & $q^{k^2} \qbin{n-1}{k}{q^2} \qbin{2n-k-1}{n-k}{q^2}+
         q^{k^2-2k+n} \qbin{n}{k}{q^2} \qbin{2n-k-2}{n-k}{q^2}$ \\ 
    & \\ \hline
\end{tabular}
\end{center}
and this formula for rank $2$ dihedral groups $W=I_2(m)$:
\begin{equation}
\label{dihedral-q-Kirkman}
\Kirk(I_2(m),1;q) \,\, = \,\, q^1[m]_{q^2}+q^{m-1}[2]_{q^2} 
                 \,\, = \,\, q^1 [m+2]_q \frac{[2]_{q^{m-2}}}{[2]_q}.
\end{equation}
\end{proposition}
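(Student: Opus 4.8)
The plan is to deduce everything from the generating-function identity \eqref{q-Kirkman-as-intertwiner}, which the paper has already reduced to Proposition~\ref{algebraic-character-prop} and \eqref{exterior-powers-in-characteristic-polynomial}: extracting the coefficient of $t^k$ from its right-hand side presents $\Kirk(W,k;q)$ as a Molien-type average over $W$. I would dispatch the two displayed identities \eqref{det-and-triv-q-Kirkman} first, uniformly. The case $k=0$ is immediate, since $\wedge^0 V$ is trivial, so $\Kirk(W,0;q)$ is the Hilbert series of $\left(\CC[V]/(\Theta)\right)^W$, which is the $q$-Catalan number $\Cat(W,q)$ recorded in \eqref{q-Catalan-formula}.

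For $k=n$ I would start from the known graded $W$-character \eqref{park-graded-W-character} and expand $\det(1-q^{h+1}w)=\sum_{j=0}^n (-1)^j q^{j(h+1)}\chi_{\wedge^j V}(w)$ (legitimate since $V\cong V^*$ for real $W$), which gives $\Kirk(W,n;q)=\sum_{j=0}^n (-1)^j q^{j(h+1)}\,\langle \det, \wedge^j V\otimes\CC[V]\rangle_q$, where $\langle\cdot,\cdot\rangle_q$ denotes the $q$-graded multiplicity with $\CC[V]$ carrying its polynomial grading. The $W$-isomorphism $\det\otimes\wedge^j V\cong\wedge^{n-j}V$ turns each summand into $\langle \wedge^{n-j}V, \CC[V]\rangle_q$, and Solomon's theorem \cite{Solomon} on invariant polynomial differential forms supplies the bigraded identity $\sum_{k=0}^n t^k\,\langle \wedge^k V, \CC[V]\rangle_q=\prod_{i=1}^n\frac{1+t\,q^{d_i-1}}{1-q^{d_i}}$, so that $\langle \wedge^{n-j}V, \CC[V]\rangle_q = e_{n-j}(q^{d_1-1},\dots,q^{d_n-1})/\prod_i(1-q^{d_i})$. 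Summing the elementary symmetric functions against $(-q^{h+1})^j$ collapses into a product, giving $\Kirk(W,n;q)=\prod_{i=1}^n\frac{q^{d_i-1}-q^{h+1}}{1-q^{d_i}}=\big(\prod_i q^{d_i-1}\big)\prod_i\frac{1-q^{h+2-d_i}}{1-q^{d_i}}$; the exponent duality $e_i+e_{n+1-i}=h$ \cite{OrlikSolomon}, equivalently $\{h+2-d_i\}_i=\{d_i\}_i$ as multisets, kills the last product, leaving $q^{\sum_i(d_i-1)}=q^{|\Phi^+|}$.

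For the classical families $A_{n-1}$, $B_n=C_n$, $D_n$ the same expansion $\det(1-q^{h+1}w)=\sum_j(-1)^j q^{j(h+1)}\chi_{\wedge^j V}(w)$ rewrites \eqref{q-Kirkman-as-intertwiner} as an alternating combination (over $j$, weighted by $q^{j(h+1)}$) of the bigraded multiplicities $\sum_d\langle \wedge^k V, \wedge^j V\otimes\CC[V]_d\rangle_W\, q^d$ --- the generalized exponents of exterior powers of $V$ twisted by exterior powers of $V$ --- and these are exactly the invariants computed type-by-type by Gyoja, Nishiyama, and Shimura \cite{GNS}. The plan is to align their normalizations with ours (degree shifts, and the $q\mapsto q^2$ that is natural in types $B/C$ and $D$), substitute their closed product/$q$-binomial forms, extract the coefficient of $t^k$, and simplify the resulting alternating $q$-binomial sums by $q$-Vandermonde and Gauss identities to reach the tabulated expressions. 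For the dihedral groups $W=I_2(m)$ (degrees $2$ and $m$, Coxeter number $h=m$, $|W|=2m$) I would instead compute directly: split the sum in \eqref{q-Kirkman-as-intertwiner} into the $m$ rotations $r^a$ (eigenvalues $\zeta^{\pm a}$ on $V$, with $\zeta=e^{2\pi i/m}$) and the $m$ reflections (eigenvalues $1,-1$), take the coefficient of $t^1$, and evaluate the rational expressions in $\zeta^a$ by summing a finite geometric series over $a$; the equality of the two forms in \eqref{dihedral-q-Kirkman} is then an elementary identity between $q$-integers. Throughout, the already-established values $\Kirk(W,0;q)=\Cat(W,q)$ and $\Kirk(W,n;q)=q^{|\Phi^+|}$ serve as sanity checks on the extracted formulas.

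The main obstacle is the classical-types step: pinning down exactly which bigraded multiplicities \cite{GNS} compute and reconciling their conventions with the $\CC[V]/(\Theta)$-model (in particular the $q\leftrightarrow q^2$ normalization in types $B/C$ and $D$), together with the $q$-binomial bookkeeping --- the alternating sum over $j$ followed by the coefficient extraction in $t$ --- needed to collapse their formulas into the compact closed forms in the statement. Everything else is either immediate, a short manipulation with Solomon's theorem and exponent duality, or an elementary finite computation.
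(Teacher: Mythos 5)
Your proposal is correct and, for most of the statement, follows the paper's own route: the $k=0$ case is read off from the $q$-Catalan discussion around \eqref{q-Catalan-formula}, and for the classical families your expansion of $\det(1-q^{h+1}w)$ into exterior characters followed by resummation over $j$ is exactly the paper's specialization $\Kirk(W,k;q)=\left[\tilde\tau_W(\chi_{\wedge^k};q,u)\right]_{u=-q^{h+1}}$ of the Gyoja--Nishiyama--Shimura generating function \eqref{GNS-generating-function}; the "main obstacle" you flag is precisely the bookkeeping the paper handles by citation (identifying the hook-shaped characters corresponding to $\wedge^k V$, the $q\mapsto q^2$ conventions in types $B/C$ and $D$, dividing by $[n+1]_q$ in type $A$ because \cite{GNS} use the reducible action on $\CC^n$, and removing a stray $\tfrac12$ in their type $D$ formula), with no further $q$-binomial summations actually required once their product formulas are specialized. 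Where you genuinely diverge is $k=n$: the paper argues that the unique copy of $\det$ (Lemma~\ref{Park-has-det-once-lemma}) must sit in degree $|\Phi^+|$ because the coinvariant algebra is a quotient of $\CC[V]/(\Theta)$, whereas you carry out the character computation the paper only alludes to — Solomon's bigraded identity together with $\det\otimes\wedge^j V\cong\wedge^{n-j}V$ gives $\Kirk(W,n;q)=\prod_i(q^{e_i}-q^{h+1})/\prod_i(1-q^{d_i})$, which exponent duality collapses to $q^{|\Phi^+|}$; I checked this and it is correct, and it has the merit of being self-contained (no appeal to Lemma~\ref{Park-has-det-once-lemma} or to the surjection onto the coinvariant algebra). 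For $I_2(m)$ your direct Molien-type sum over the $2m$ group elements (reflections contribute nothing since $\chi_V$ vanishes on them) is an elementary equivalent of the paper's slicker computation via Frobenius reciprocity from the rotation subgroup and the explicit hsop $(x^{m+1},y^{m+1})$; both produce $q[m]_{q^2}+q^{m-1}[2]_{q^2}$, and the second equality in \eqref{dihedral-q-Kirkman} is, as you say, a routine $q$-integer identity.
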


\begin{proof}
The first formula in \eqref{det-and-triv-q-Kirkman} can be proven via a character
computation, but one can also argue it as follows.
Since $\wedge^n V$ carries the alternating $W$-character $\det$,
the formula asserts that 
the unique copy of this character $\det$ occurring in $\CC[V]/(\Theta)$,
whose existence is proven in Lemma~\ref{Park-has-det-once-lemma},  occurs 
in degree $|\Phi^+|$.  In other words, it is carried by the smallest nonzero
$W$-alternating polynomial $\prod_{H \in \Cox(\Phi)} \ell_H$, where $\ell_H$ is a linear
form in $V^*$ that vanishes on the reflecting hyperplane $H$.
This follows because the same is well-known to be true 
for the {\it coinvariant algebra} \cite[Ch. 3]{Humphreys}, and the coinvariant algebra
is known \cite[Prop. 5.2]{BessisR} to be a quotient
of $\CC[V]/(\Theta)$.
 
The second formula in \eqref{det-and-triv-q-Kirkman} is simply re-asserting the discussion 
surrounding \eqref{q-Catalan-formula}, since
$\wedge^0 V$ carries the trivial $W$-character.

For the classical Weyl groups $W$ and
for any irreducible $W$-character $\chi$,
Gyoja, Nishiyama, and Shimura \cite{GNS} compute explicit formulas 
for a generating function
\begin{equation}
\label{GNS-generating-function}
\tilde{\tau}_W(\chi;q,u):=
\frac{1}{|W|} \sum_{w \in W} \chi(w) \frac{\det(1+uw)}{\det(1-qw)}.
\end{equation}
Comparing with \eqref{q-Kirkman-as-intertwiner} one sees that  
$$
\Kirk(W,k;q)=\left[ \,\, \tilde{\tau}_W(\chi_{\wedge^k};q,u) \,\, \right]_{u=-q^{h+1}}.
$$
For type $A_{n-1}$, one must divide the result by $[n+1]_q$,
due to the fact that they are not using the irreducible action of $W=\symm_n$
on $V \cong \CC^{n-1}$, but rather the action on $\CC^n$ by permuting
coordinates\footnote{Alternatively, in type $A_{n-1}$, this same calculation 
can be reproduced by combining the {\it $q$-hook-content formula}
for the principal specialization of a Schur function \cite[Thm. 7.21.2]{StanEC2} with
a calculation from Haiman \cite[Prop. 2.5.2]{Haiman}.}.
One needs to know that  the exterior power $\wedge^k V$ corresponds
in their indexing of irreducible characters, to
\begin{enumerate}
\item[$\bullet$]
$\chi^{\alpha}$ with $\alpha=(n-k,1^k)$ in type $A_{n-1}$
\cite[formula (3.2) ]{GNS}, 
\item[$\bullet$]
$\chi^{(\alpha,\beta)}$ with $(\alpha,\beta)=((n-k),(1^k))$ in type $B_n/C_n$
\cite[formula (3.9) ]{GNS}, 
\item[$\bullet$]
$\chi^{\{\alpha,\beta\}}$ with\footnote{One must also remove a stray factor of 
$\frac{1}{2}$ appearing on the right in \cite[formula (3.15) ]{GNS}.} 
$\{ \alpha,\beta\}=\{ (n-k),(1^k)\}$ in type $D_n$
\cite[formula (3.15) ]{GNS}.
\end{enumerate}

Formula \eqref{dihedral-q-Kirkman} can be verified as follows.
The reflection representation $V$ for $W=I_2(m)$ turns out to
be induced from the character $\chi_\omega$ of its rotation subgroup $C=\langle c \rangle$
that sends the generator $c$ to $\omega=e^{\frac{2 \pi i}{m}}$.
Thus Frobenius Reciprocity gives 
$$
\begin{aligned}
\Kirk(W,1;q)
&=\sum_{d \geq 0}
 \langle \,\, \chi_{V} \,\ 
              , \,\, \chi_{\left( \CC[V]/(\Theta) \right)_d } \,\, \rangle_W \cdot q^d\\
&=\sum_{d \geq 0}
 \langle \,\, \chi_\omega \,\ 
              , \,\, \Res^W_C \chi_{\left( \CC[V]/(\Theta) \right)_d } \,\, \rangle_W \cdot q^d.
\end{aligned}
$$
In other words, we wish to compute the graded Hilbert series in the variable $q$
for the $\chi_\omega$-isotypic component of $\CC[V]/(\Theta)$,
considered as a $C$-representation by restriction from $W$.  
If one chooses the convenient hsop $(\Theta)=(x^{m+1},y^{m+1})$
inside $\CC[V]=\CC[x,y]$ then $\CC[V]/(\Theta)$ has $\CC$-basis $\{x^i y^j\}_{i,j=0,1,2,\ldots,m}$.
Hence its $\chi_\omega$ isotypic component has $\CC$-basis
$$
\begin{aligned}
&\{x^i y^j: 0 \leq i,j \leq m, \text{ and }i-j \equiv 1 \mod m\}\\
&=\{ x^1y^0, x^2y^1, x^3y^2, \cdots, x^m y^{m-1} \} \cup \{x^0y^{m-1},x^1 y^m\}
\end{aligned}
$$
and its Hilbert series is
$$
(q^1+q^3+q^5+\cdots+q^{2m-1})+(q^{m-1}+q^{m+1}) =
q^1[m]_{q^2}+q^{m-1}[2]_{q^2}. 
$$
\end{proof}

We close this section with two remarks on the formulas in
Proposition~\ref{q-Kirkman-formulas}.

\begin{remark} \rm \
One can readily check that the 
formula for $\Kirk(W,k;q)$ when $W$ is of type $A_{n-1}$
given above is, up to a power of $q$, the same as 
the {\it $q$-hook-formula} $f^\lambda(q)$ or {\it fake degree polynomial}
for $\lambda=(n-k, n-k, 1^k)$ which $q$-counts
standard Young tableaux of shape $\lambda$ by their major index; see
\cite[Cor. 7.21.5]{StanEC2}.  

At $q=1$, this
coincidence between type $A$ Kirkman numbers and tableaux numbers
is an observation of K. O'Hara and A.V. Zelevinsky;  see
Stanley \cite{Stanley-dissections} for a bijective proof.   
We do not have a good algebraic explanation for the coincidence when
the variable $q$ is unspecialized, even though both polynomials in $q$
have Hilbert series interpretations.
\end{remark}

\begin{remark}\rm \
Other versions of $q$-Kirkman numbers $\Kirk(W,k;q)$ have been
considered recently in the literature in types $A,B/C,D$ and $I_2(m)$,
but defined in ad hoc ways.  They have appeared in conjunction with
{\it cyclic sieving phenomena} ({\it CSP's}) involving the set $X$ of clusters for
$W$ of a fixed cardinality-- recall from Corollary~\ref{Narayana-Kirkman-corollary}
that $\Kirk(W,k)$ counts the clusters of cardinality $n-k$.
This set $X$ carries a natural action of a cyclic group 
$C=\langle \tau \rangle$ of order $h+2$
introduced by Fomin and Zelevinsky \cite{FominZelevinsky}, generated
by their {\it deformed Coxeter element} $\tau$.

We compare here this type-by-type with the
other version of $q$-Kirkman numbers
with a renormalized version of our $\Kirk(W,k;q)$ defined in \eqref{q-Kirkman-definition},
where one divides by the smallest power of $q$ to make the constant term $1$;
call this renormalized polynomial $\Kirk_0(W,k;q)$.  

\vskip.1in
\noindent
{\sf Type A}.
In type $A$, it was shown in \cite[Thm. 7.1]{RStantonWhite}, 
by brute force evaluation and enumeration
that one has a CSP for this
triple $(X,X(q),C)$, with $X$ and $C$ as described above,
and $X(q)=\Kirk_0(W,k;q)$.  

\vskip.1in
\noindent
{\sf Type B/C}.
In type $B/C$, the analogous assertion was proven using
similar methods, in work of Eu and Fu \cite[Thm. 4.1 at $s=1$]{EuFu},
again using $X(q)=\Kirk_0(W,k;q)$. 

\vskip.1in
\noindent
{\sf Type I (dihedral)}.
For type $I_2(m)$, Eu and Fu prove the analogous
assertion in \cite[\S 6, $s=1$]{EuFu}, except that
their choice of polynomials $X(q)$, while
agreeing with $\Kirk_0(W,k;q)$ for $k=0,2$, 
will disagree at $k=1$:  they use instead
$$
X(q)=
\begin{cases}
[a+2]_q & \text{ if }a\text{ is odd,}\\
[a+2]_{q^2} & \text{ if }a\text{ is even.}
\end{cases}
$$
However, one readily checks that 
their choice of 
$X(q)$ is congruent to $\Kirk_0(W,1;q)$ modulo $q^{h+2}-1$.
Thus either polynomial has the same evaluation 
when $q$ is any $(h+2)^{th}$ root-of-unity,
and hence the CSP holds for either choice.

\vskip.1in
\noindent
{\sf Type F}.
Eu and Fu do not suggest $X(q)$ in general for type $F_4$,
but they do give data in \cite[Figure 15]{EuFu} on the orbit
sizes for the action of the deformed Coxeter element $\tau$
on the sets of clusters of a fixed cardinality $n-k$. For each
$k=0,1,2,3,4$, this data shows all orbits of size $7=\frac{h+2}{2}$,
and we checked that $\Kirk_0(W,k;q)$ would predict this correctly
for $k=0,1,2,3,4$.

\vskip.1in
\noindent
{\sf Type D}.
In type $D$, Eu and Fu prove in \cite[Thm. 5.1 at $s=1$]{EuFu} 
an analogous CSP, but
using polynomials $X(q)$ that disagree more fundamentally with 
$\Kirk_0(W,k;q)$. Their formula (16) at $s=1$ (and replacing
$k$ by $n-k$) defines a polynomial
$$
\begin{aligned}
X(q)&= \qbin{2n-k-1}{n-k}{q^2} \left( \qbin{n-1}{n-k}{q^2} +
      q^n \qbin{n-2}{n-k-1}{q^2} \right) \\
    & + \left( \qbin{2n-k-1}{n-k}{q^2} + 
        q^n \qbin{2n-k-2}{n-k}{q^2} \right) \qbin{n-2}{k}{q^2}.
\end{aligned}
$$
which they prove gives a triple $(X,X(q),C)$ as above exhibiting the CSP.

Although this $X(q)$ equals $\Kirk_0(W,k;q)$ whenever $k=0$ or
$k=n$, one can check that, starting with $n=5$ and $k=2,3$,
they are $X(q) \not\equiv \Kirk_0(W,k;q) \bmod{q^{h+2}-1}$.
In fact, $X(q)$ and $\Kirk_0(W,k;q)$ disagree at $q=-1$.
Thus if one assumes the result \cite[Thm. 5.1 at $s=1$]{EuFu} is
correct, which we have not checked, then
one reaches the somewhat surprising conclusion
that $\Kirk_0(W,k;q)$ fails to give a CSP. 

\vskip.1in
\noindent
{\sf Type E}.
We did not check $E_7, E_8$.  However,
in type $E_6$, the data given by Eu and Fu in \cite[Figure 15]{EuFu} 
on the orbit sizes for the action of $\tau$
agrees with the CSP prediction of  $\Kirk_0(W,k;q)$ for
$k=0,1,3,5,6$, but fails to give a CSP for $k=2,4$.
\end{remark}

\section{Inspiration: Nonnesting parking functions label Shi regions}
\label{Shi-chamber-section}

The original motivation for this paper came from the work of the first and third author on the Shi arrangement and the Ish arrangement of hyperplanes \cite{ArmstrongRhoades}.  In type A there is a natural labeling of the regions of the Shi arrangement due to Athanasiadis and Linusson \cite{ALShi} (which is a variant of a description due to Shi \cite{Shi-sign-types}) and these labels are essentially the nonnesting parking functions. Upon generalizing this labeling to other Weyl groups, the authors realized that the same process could be used to define {\bf noncrossing parking functions} and that this notion extends beyond crystallographic types. 

As motivation, we explain here how (for crystallographic $W$) the set of nonnesting parking functions $\Park^{NN}_W$ naturally labels the regions of the Shi arrangement.

Let $W$ be a (by definition crystallographic) Weyl group with positive roots $\Phi^+$ inside the root system $\Phi \subseteq V$,
and for each $\alpha\in\Phi$ and $k\in\RR$ define an affine hyperplane
\begin{equation}
H_{\alpha,k} := \{ v \in V \,:\, \langle v, \alpha \rangle = k \}.
\end{equation}
As a special case one has the linear hyperplanes $H_{\alpha,0} = H_{\alpha}$
from the Coxeter arrangement $\Cox(\Phi)$.  

\begin{defn}
The {\sf Shi arrangement} 
$\Shi(\Phi)$ is the arrangement of affine hyperplanes
\begin{equation}
\Shi(\Phi) := \{ H_{\alpha,k} \,:\, \alpha \in \Phi^+, k = 0,1 \}.
\end{equation}
\end{defn}

This arrangement arose originally in work of Shi \cite{Shi} 
on Kazhdan-Lusztig cells for affine Weyl groups.
He later showed in \cite{Shi-sign-types} that $\Shi(\Phi)$ dissects $V$ 
into $(h+1)^n$ connected components $R$,
which we will call {\it regions};  these correspond to Shi's {\it admissible
sign types}.
Given a region $R$ of $\Shi(\Phi)$, and $\alpha \in \Phi^+$,
say that the hyperplane $H_{\alpha,1}$ is a {\sf ceiling} of $R$ 
if the inequality $ \langle v, \alpha \rangle \leq 1$
is one of the irredundant facet inequalities defining its closure $\overline{R}$ as a
(sometimes unbounded) polyhedron.

Note that since  $\Cox(\Phi) \subseteq \Shi(\Phi)$,
each region $R$ lies in a unique chamber $\Cox(\Phi)$, of the form $wC$
for a uniquely defined $w$ in $W$, 
where $C$ is the {\sf (open) dominant region} of $\Cox(\Phi)$ defined by
$\langle v, \alpha \rangle > 0$ for all $\alpha \in \Phi^+$.
The Shi regions $R$ lying within $C$ correspond to
the {\it $\oplus$-sign types} from \cite{Shi-plus-sign-types}.

The following lemma about the interaction of $\Cox(\Phi)$ and $\Shi(\Phi)$
is surely well-known, but we include the easy proof for the sake of completeness.
\begin{lemma}
\label{Shi-Cox-fact}
For $\beta \in \Phi^+$ a positive root,
its associated affine Shi hyperplane $H_\beta,1$ intersects the (open) chamber $wC$ 
if and only if $\beta = w(\alpha)$ for some $\alpha \in \Phi^+$.
\end{lemma}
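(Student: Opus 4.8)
The plan is to rewrite the open chamber $wC$ in terms of positive roots and then reduce the statement to a sign dichotomy. First I would observe that, since $w$ acts on $V$ orthogonally, one has $u \in wC$ if and only if $w^{-1}u \in C$, i.e.\ $\langle w^{-1}u, \alpha\rangle > 0$ for all $\alpha \in \Phi^+$, i.e.\ $\langle u, w(\alpha)\rangle > 0$ for all $\alpha \in \Phi^+$. Hence
\[
wC = \{\, u \in V : \langle u, \gamma\rangle > 0 \text{ for all } \gamma \in w(\Phi^+) \,\}.
\]

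Next I would record the dichotomy coming from the fact that $w$ permutes $\Phi = \Phi^+ \sqcup (-\Phi^+)$: the set $w(\Phi^+)$ is again a positive system, so for a fixed positive root $\beta$ exactly one of $\beta$, $-\beta$ lies in $w(\Phi^+)$; and $\beta \in w(\Phi^+)$ precisely when $\beta = w(\alpha)$ for some $\alpha \in \Phi^+$ (namely $\alpha = w^{-1}(\beta)$).

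Finally I would combine the two. If $\beta = w(\alpha)$ with $\alpha \in \Phi^+$, then by the description of $wC$ the linear functional $u \mapsto \langle u, \beta\rangle$ is strictly positive on the nonempty open cone $wC$; since $wC$ is stable under multiplication by positive scalars, this functional attains every value in $(0,\infty)$ on $wC$, in particular the value $1$, so $H_{\beta,1} \cap wC \neq \emptyset$. Conversely, if $\beta$ is not of the form $w(\alpha)$ with $\alpha \in \Phi^+$, then $-\beta \in w(\Phi^+)$, so $\langle u, \beta\rangle < 0$ for every $u \in wC$, whence $H_{\beta,1}$, on which this functional equals $1$, misses $wC$. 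There is no real obstacle here --- as the lemma's phrasing suggests, the argument is routine; the only points needing a moment's care are the orthogonality rewriting of $wC$ and the use of the cone structure (equivalently the openness of $wC$) to pass from positivity of $\langle \cdot, \beta\rangle$ to its attaining the value $1$.
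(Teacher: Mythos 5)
Your proof is correct and is essentially the paper's argument in slightly different packaging: both rest on the $W$-invariance of the pairing (equivalently, rewriting membership in $wC$ via $w^{-1}$) together with the fact that points of $C$ pair positively with positive roots and negatively with negative roots. The paper transports an explicit point $v \in C$ with $\langle v,\alpha\rangle = 1$ instead of invoking the cone-scaling remark, but that is the same scaling fact you use.
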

\begin{proof}
Fix $\beta$ in $\Phi^+$.
If $\beta = w(\alpha)$ for $\alpha$ in $\Phi^+$,
then choosing $v$ in $C$ with $\langle v,\alpha  \rangle =1$
gives $v':=w(v)$ in $wC$ with
\begin{equation}
\label{affine-hyperplane-calculation}
\langle v', \beta \rangle
=\langle w(v), w(\alpha) \rangle 
= \langle v,\alpha  \rangle 
=1.
\end{equation}
So $H_{\beta,1}$ intersects $wC$.

Conversely, if $H_{\beta,1}$ intersects $wC$, then there exists some $v'$ in $wC$ with
$\langle v', \beta \rangle=1$.  The same calculation \eqref{affine-hyperplane-calculation}
shows that the point $v:=w^{-1}(v')$ lying in $C$, and the root $\alpha \in \Phi$
for which $\beta=w(\alpha)$,
have pairing $\langle v,\alpha  \rangle =1 > 0$.  On the other hand, since
$v$ lies in $C$, it pairs positively with positive roots and negatively with
negative roots.  Thus $\alpha$ must be a positive root, i.e. $\beta=w(\alpha)$
for some $\alpha \in \Phi^+$.
\end{proof}

\begin{proposition}
The labeling map $\lambda: R \longmapsto [w,X]$, where $R \subseteq wC$ and 
$$
X:=\bigcap_{\substack{\text{ceilings }H_{\alpha,1} \\ \text{ of }R}} H_{w^{-1}(\alpha),0},
$$
gives a well-defined bijection from the regions of $\Shi(\Phi)$ to $\Park^{NN}_{\Phi}$.
\end{proposition}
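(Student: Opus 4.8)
The plan is to show $\lambda$ is well-defined and surjective, and then to deduce that it is bijective from the two known equalities $\#\{\text{regions of }\Shi(\Phi)\} = (h+1)^n = |\Park^{NN}_\Phi|$ (the first is Shi's theorem quoted above, the second holds because $\Park^{NN}_W \cong Q/(h+1)Q$). The mechanism throughout is to reduce statements about a region $R \subseteq wC$ to the dominant chamber by applying $w^{-1}$: this carries $R$ to a region $w^{-1}R$ of $C$ cut only by the affine hyperplanes $H_{\beta,1}$ with $\beta \in S_w := \Phi^+ \cap w^{-1}(\Phi^+)$ --- by Lemma~\ref{Shi-Cox-fact} these are the only Shi hyperplanes meeting the open chamber $wC$ --- and it carries ceilings to ceilings.

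For well-definedness, fix $R \subseteq wC$. A ceiling hyperplane $H_{\alpha,1}$ of $R$ contributes a codimension-one facet to $\overline{R}$, so it cannot lie on a wall $H_{\beta,0}$ of $wC$; hence $H_{\alpha,1}$ meets the open chamber $wC$, and Lemma~\ref{Shi-Cox-fact} gives $w^{-1}(\alpha) \in S_w$. Regard $S_w$ as a subposet of the root poset $(\Phi^+,\leq)$. The region $w^{-1}R \subseteq C$ is determined by the set $F := \{\beta \in S_w : \langle v,\beta\rangle > 1 \text{ on } w^{-1}R\}$, which is an order filter of $S_w$ because $\langle v,\beta\rangle \leq \langle v,\beta'\rangle$ on $C$ whenever $\beta \leq \beta'$; and an inequality $\langle v,\beta\rangle \leq 1$ is an irredundant facet exactly when $\beta$ is a maximal element of the complementary ideal $I := S_w \setminus F$. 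Thus $A_R := \{ w^{-1}(\alpha) : H_{\alpha,1}\text{ a ceiling of }R \}$ is a subset of $\max(I)$, hence an antichain of $S_w$ and so of $(\Phi^+,\leq)$; therefore $X = \bigcap_{\beta \in A_R} H_\beta$ is a nonnesting flat and $\lambda(R) = [w,X] \in \Park^{NN}_\Phi$ is well-defined.

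For surjectivity, take $[v,X] \in \Park^{NN}_\Phi$ and let $B$ be the unique antichain of $(\Phi^+,\leq)$ with $X = \bigcap_{\beta\in B}H_\beta$, uniqueness coming from the embedding $NN(W)\hookrightarrow\LLL$. Since $B\subseteq\Phi^+$ lies in an open half-space, it lies in a positive system of the root subsystem $\Phi\cap X^\perp$, whose reflection group is $W_X$; as $W_X$ acts simply transitively on positive systems of $\Phi\cap X^\perp$, there is an element $w \in vW_X$ with $w(B)\subseteq\Phi^+$, i.e. with $B\subseteq S_w$. Let $I$ be the order ideal of $S_w$ generated by $B$, so $\max(I)=B$, put $F:=S_w\setminus I$, and set
$$
R_0 := \{\, v\in C \ :\ \langle v,\beta\rangle > 1 \ \forall\beta\in F,\quad \langle v,\beta\rangle < 1 \ \forall\beta\in I \,\}.
$$
Then $R_0$ is nonempty: it contains the dominant region of $\Shi(\Phi)$ whose set $\{\gamma\in\Phi^+:\langle v,\gamma\rangle>1\}$ is the order filter of $(\Phi^+,\leq)$ generated by $F$, and --- since dominant regions, antichains of $(\Phi^+,\leq)$, and order filters of $(\Phi^+,\leq)$ are all in bijection, hence $\Cat(W)$ in number --- every such order filter is realized by a dominant region. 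Now $w R_0$ is a single region of $\Shi(\Phi)$ lying in $wC$ (by Lemma~\ref{Shi-Cox-fact}, no further Shi hyperplane crosses $wC$), its ceilings pull back under $w^{-1}$ to $\max(I)=B$, so $\lambda(wR_0) = [w,X] = [v,X]$. A surjection between two finite sets of the same cardinality is a bijection, which finishes the argument.

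The step I expect to be the main obstacle is the one repeatedly invoked above: that inside a chamber $wC$ the regions of $\Shi(\Phi)$ are governed by the order filters of the truncated root poset $S_w = \Phi^+\cap w^{-1}(\Phi^+)$, with the ceilings corresponding \emph{precisely} to the maximal elements of the complementary ideal, together with the attendant non-emptiness statements. For the dominant chamber this is essentially the sign-type analysis of Shi and the dominant-region/antichain correspondence of Cellini--Papi and Athanasiadis, and the general case follows by transporting it through $w^{-1}$; pinning down the ``irredundant facet $\Leftrightarrow$ maximal element'' direction (needed so that the ceiling flat of $wR_0$ is exactly $X$, not a strictly larger flat) is where the genuine work lies.
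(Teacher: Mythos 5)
Your well-definedness argument is sound and is essentially the paper's (two comparable roots among the ceilings would make one inequality redundant), but the proposal as written has a genuine gap, and it is precisely the one you flag at the end: in the surjectivity step you need that \emph{every} maximal element of the ideal $I$ generated by $B$ inside $S_w$ actually yields a ceiling of the constructed region $wR_0$. You only prove the easy implication (ceiling $\Rightarrow$ maximal in $I$), which suffices for well-definedness; without the converse, the ceilings of $wR_0$ could a priori be a proper subset $B'\subsetneq B$, so that $\lambda(wR_0)=[w,X']$ with $X'\supsetneq X$, and surjectivity onto $[v,X]$ is not established. Note, however, that the device you already use for nonemptiness closes this gap: for $\beta$ maximal in $I$, the set $F\cup\{\beta\}$ is again an order filter of $S_w$, and the $\Phi^+$-filter it generates contains no element of $I\setminus\{\beta\}$ (the same check you made for $F$), so the dominant Shi region realizing that $\Phi^+$-filter is a nonempty subset of $C$ lying on the $>1$ side of $H_{\beta,1}$ while satisfying all the other inequalities defining $R_0$; hence $\langle\cdot,\beta\rangle\leq 1$ is irredundant for $\overline{R_0}$ and $H_{w(\beta),1}$ is a ceiling of $wR_0$. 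This repair, like your nonemptiness step, leans on the known bijection between dominant Shi regions and order filters/antichains of $(\Phi^+,\leq)$ (Shi, Cellini--Papi, Athanasiadis), i.e.\ on the $w=1$ case of the proposition; that is citable, but it is a heavier external input than the paper needs.

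For comparison, the paper runs the counting argument through injectivity rather than surjectivity: it defines a backward map $\mu$ sending $[w,X]$ to the intersection of $wC$ with the ceiling half-spaces, where the coset representative $w$ is pinned down canonically using Sommers's theorem that an antichain of $(\Phi^+,\leq)$ forms a simple system for $W_X$, together with Lemma~\ref{Shi-Cox-fact}; it then checks $\mu\circ\lambda=\mathrm{id}$, using the fact that a Shi region is determined by its chamber together with its set of ceilings. That route never has to \emph{construct} a region with prescribed ceilings, which is exactly the step your surjectivity argument requires and leaves open; on the other hand, your route, once patched as above, is also a complete proof and makes the filter/ideal structure of the regions within each chamber explicit.
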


The poset of positive roots $\Phi^+$, along with
the labeling of regions $R$ by $[w,X]$
for $\Shi(A_2), \Shi(B_2)$, are illustrated below.  Here
the $\Cox(\Phi)$ chamber $wC$ containing $R$
is labeled toward the periphery, with the subset of roots
\begin{equation}
\label{R-antichain}
A:=\{ w^{-1}(\alpha) : \text{ ceilings } H_{\alpha,1} \\ \text{ of }R \}
\end{equation}
whose normals intersect in $X$ labeled in the interior of $R$.  This
labeling should be compared with Shi \cite[Figures 1 and 2]{Shi-sign-types}.

\begin{center}
\includegraphics[scale=.8]{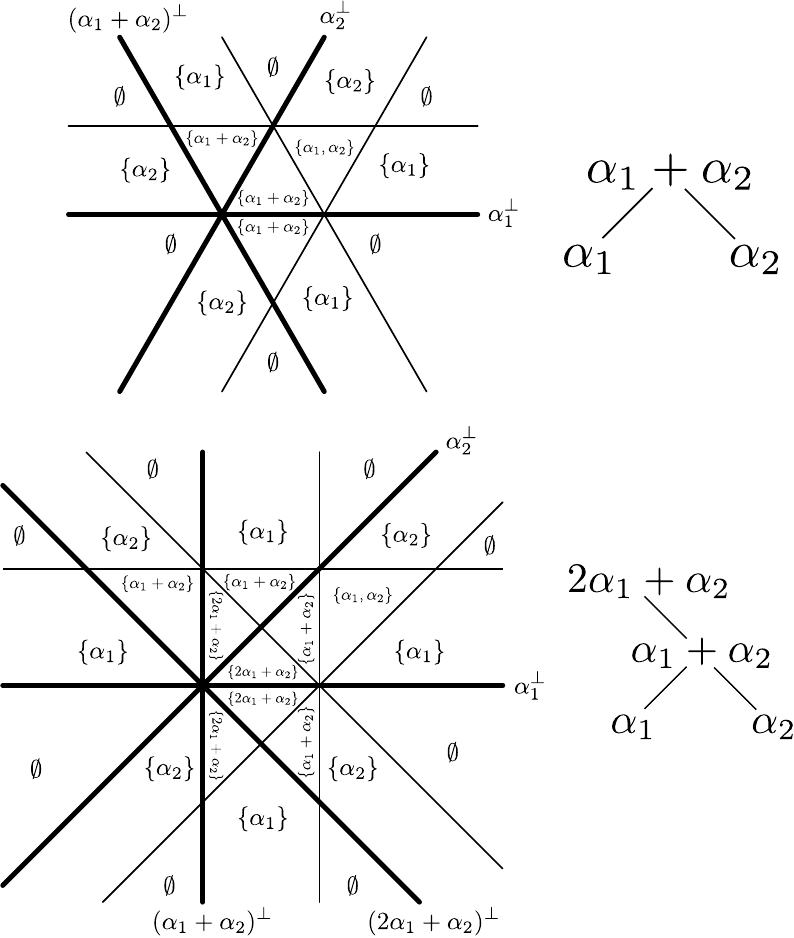}
\end{center}

\begin{proof}
We first check that $\lambda$ is well-defined.  Note that when $R \subseteq wC$,
the subset $A \subseteq \Phi$ defined in \eqref{R-antichain}
actually lies in $\Phi^+$, due to Lemma~\ref{Shi-Cox-fact}:
a ceiling hyperplane $H_{\alpha,1}$ for $R \subseteq wC$ has
$H_{\alpha,1}$ intersecting $wC$, so $w^{-1}(\alpha)$ lies in $\Phi^+$.

Well-definition also requires checking that
$A$ forms an antichain in the root ordering on $\Phi^+$.  Suppose, for the
sake of contradiction, that $A$ contains two positive roots 
$w^{-1}(\alpha) < w^{-1}(\beta)$ in $\Phi^+$ such
that both $H_{\alpha,1}, H_{\beta,1}$ are ceilings for $R$.  
Note that $\beta-\alpha$ has
$w^{-1}(\beta-\alpha)=w^{-1}(\beta) - w^{-1}(\alpha)$,
a nonnegative sum of positive roots.
Therefore 
$
\langle v,w^{-1}(\beta-\alpha) \rangle \geq 0
$ 
is a valid inequality for $v$ in the closure $\overline{C}$ of the fundamental chamber.
Then  
\begin{equation}
\label{valid-chamber-inequality}
\langle v', \beta-\alpha \rangle \geq 0
\end{equation}
is valid for $v'$ in the closed chamber $\overline{wC}$, and 
hence also on the closed region $\overline{R}$.  But then the valid
inequality $\langle v', \alpha \rangle \leq 1$ on $\overline{R}$
is already implied by \eqref{valid-chamber-inequality}
together with the valid inequality $\langle v', \beta \rangle \leq 0$
on $\overline{R}$.  Thus one cannot have both $H_{\alpha,1}$ and
$H_{\beta,1}$ as ceilings for $R$.

Since one knows that the number of regions $R$ in $\Shi(\Phi)$
is the same as the cardinality of $\Park^{NN}_W$, namely
$(h+1)^n$, it only remains to show that the map 
$\lambda: R \mapsto [w,X]$ defined above is injective.
To this end, define a map backward $\mu: [w,X] \mapsto R$
as follows.  Given an equivalence class $[w,X]$ in $\Park^{NN}_W$, 
pick the unique coset representative $w$ for $wW_X$
having $w(\alpha) \in \Phi^+$ for all $\alpha$ in $\Phi^+$ 
with $H_{\alpha,0} \supset X$.  Then $\mu$ maps
$[w,X]$ to the intersection $R$ of the chamber $wC$ with 
the open half-spaces defined by $\langle v', w(\alpha) \rangle < 1$ as $\alpha$ 
runs through the (unique) 
antichain $A$ of positive roots having $\bigcap_{\alpha \in A} H_{\alpha,0}=X$.

To see that $\mu \circ \lambda$ is the identity map on Shi regions,
we first claim that $\mu(\lambda(R))$ and $R$ lie in the same chamber $wC$.
This requires a fact due to Sommers \cite[\S2]{Sommers-B-stable}:  
an antichain $A$ in the positive root poset $\Phi^+$ is always the $W$-image of
a subset of some choice of {\it simple roots} for $W$.  This implies that
the antichain $A$ of positive roots defined by $R$ as in \eqref{R-antichain} above
will always form a system of simple roots for the subgroup $W_X$.
Since Lemma~\ref{Shi-Cox-fact} implies $w(\alpha) \in \Phi^+$
for all $\alpha$ in $A$, this then implies that $w(\alpha) \in \Phi^+$ for
all $\alpha$ in $\Phi^+$ with $H_\alpha \supset X$.  Thus if $R \subseteq wC$
and $\lambda(R)=[w,X]$, then $w$ {\it is} always the unique coset representative 
for $wW_X$ chosen by the map $\mu$ when applied to $\lambda(R)$.

Finally, note that a Shi region $R$ is completely determined by knowing
which chamber $wC$ contains it along with its set of ceilings $H_{\alpha,1}$, 
so $\mu(\lambda(R))=R$.
\end{proof}

\section{Open Problems}
\label{open-problems-section}

\subsection{Two basic problems}

\begin{problem}
Prove the strong or intermediate version of the Main Conjecture
in a case-free fashion.

Short of this, check that the weakest version of the Main Conjecture
holds via computer calculation in the remaining types $E_7, E_8$.  
\end{problem}

\begin{problem}
Extend the Main Conjecture to well-generated complex reflection groups,
or even all complex reflection groups.
\end{problem}

\noindent
In this regard the authors do not know, for example, how far
the statement and proof of Etingof's Theorem~\ref{Etingof-reduced} below 
generalizes.

\subsection{Nilpotent orbits, $q$-Kreweras and $q$-Narayana numbers}

Recall from the Introduction that the Kirkman numbers and 
Narayana numbers for $W$
give the $f$-vector and $h$-vector of the Fomin-Zelevinsky cluster complex.
In Section  \ref{Narayana-Kirkman-section} we related them to
the multiplicities of the $W$-irreducible exterior powers $\wedge^k V$ 
in the $W$-parking space, and the graded multiplicities allowed one
to define $q$-Kirkman numbers.  

In recent work on geometry of nilpotent orbits, 
E. Sommers \cite{Sommers-Narayana} has suggested how to
define $q$-analogues for Weyl groups $W$ of the {\it Kreweras numbers}, which
in type $A$ count elements of $NC(W)$ according to their parabolic
type;  they are the polynomials
$\left[ f_{e,\phi}(q,t) \right]_{t=h+1}$,
in the notation of \cite[\S 5.3]{Sommers-Narayana}.  
Sommers and the second author have observed that these can be grouped into 
$q$-analogues of {\it Narayana numbers}.

\begin{problem}
What is the relation, if any, between these $q$-Narayana numbers 
suggested by Sommers's work, and our $q$-Kirkman numbers $\Kirk(W,k;q)$?
Is there a well-behaved $q$-analogue of the $f$-vector to $h$-vector
transformation\footnote{While this article was under review, an affirmative answer to this question was found by Sommers and the second author (in preparation).}?
\end{problem}

\subsection{The Fuss parameter}

We discuss here some problems in the
known direction of generalization for Catalan
objects that replaces the parameter $p=h+1$
with its {\it Fuss} analogue $p=mh+1$, or
more generally, with a parameter $p$ assumed to 
satisfy $\gcd(p,h)=1$ (or weakenings of this assumption).  
Under these hypotheses, the ungraded and graded
virtual $W$-characters 
$$
\chi(w)=p^{\dim V^w}
\qquad \text{ and } \qquad
\chi(w; q)=
\frac{ \det(1-q^{p}w) }{  \det(1 - qw)    } \\
$$
actually come from genuine $W$-representations that one might call
{\it $p$-parking spaces}.
The ungraded character corresponds to
the $W$-action permuting $Q/pQ$, as observed by 
Haiman \cite[Prop. 7.4.1]{Haiman}.  The
graded character corresponds (via Koszul complex calculations as
in  \cite[Theorem 1.11]{BEG}, \cite[\S 4]{BessisR}, \cite[\S 5]{Gordon})
to the quotient $\CC[V]/(\Theta)$
where $\Theta$ is a homogeneous system of parameters of degree $p$ carrying
a certain Galois twist of the reflection representation $V$,
and whose existence is provided by the rational Cherednik theory 
(see, e.g., Etingof \cite{Etingof}).  

One can start with product formulas for the
cases of \eqref{GNS-generating-function} corresponding to the trivial
and determinant characters (calculated as usual via 
Solomon's theorem \cite{Solomon})
\begin{equation}
\label{GNS-boundary-case-formulas}
\tilde{\tau}_W(\triv;q,u)=\prod_{i=1}^n \frac{1+q^{e_i}u}{1-q^{e_i+1}} 
\qquad \text{ and } \qquad
\tilde{\tau}_W(\det;q,u)=\prod_{i=1}^n \frac{u+q^{e_i}}{1-q^{e_i+1}}.
\end{equation}
From these one deduces, by setting $u=-q^p$, that the 
Hilbert series of the $W$-fixed spaces and
the $W$-det-isotypic spaces within 
$\CC[V]/(\Theta)$ have Hilbert series
$$
\Cat^{(p)}(W;q):=
\prod_{i=1}^n \frac{[e_i+p]_q}{[e_i+1]_q} 
\qquad \text{ and } \qquad
q^{|\Phi^+|} \prod_{i=1}^n \frac{[p-e_i]_q}{[e_i+1]_q},
$$
generalizing the extreme cases \eqref{det-and-triv-q-Kirkman}
of the $q$-Kirkman number formulas.
The $q=1$ specializations then give \cite[Thm. 7.4.2]{Haiman} the
total number of $W$-orbits and the number of $W$-regular
orbits on $Q/pQ$.  
For $p=mh+1$, one has that $\Cat^{(mh+1)}(W;q)$ is the 
$q$-{\it Fuss-Catalan number}, whose $q=1$ specialization
counts $m$-element multichains in the 
noncrossing partitions $NC(W)$, 
dominant regions in the $m$-extended Shi arrangement, and 
facets in the $m$-generalized cluster complex; see \cite[Chapter 5]{Armstrong}.

One can again define Narayana and Kirkman polynomials with this parameter $p$,
which have interpretations when $p=mh+1$.  For example, one can define Narayana
numbers at parameter $p$ that count the $W$-orbits $x$ in $Q/pQ$ according to the parabolic 
corank\footnote{In the usual case $p=h+1$, it would be immaterial whether
one uses parabolic rank or corank, as the Narayana polynomial will have symmetric coefficient sequence
in $t$.} of their $W$-stabilizer $W_x$:
 \begin{equation}
 \label{eq:hpoly}
\sum_{x \in W\backslash Q/pQ} t^{n-\rank(W_x)} =
 \frac{1}{|W|} \sum_{w\in W} \det(t+(1-t)w) \cdot p^{\dim V^w} 
 \end{equation}
One finds in the crystallographic case that when $p=mh+1$, 
this is the $h$-{\it polynomial} of the $m$-generalized cluster complex of Fomin and Reading \cite{FominReading},
that is, replacing $t$ by $t+1$ in  \eqref{eq:hpoly}
gives the generating function for the face numbers of this simplicial complex.

\begin{problem}
Given an irreducible real reflection group $W$ and
positive integer $p$ with $\gcd(p,h)=1$, find
definitions of noncrossing and nonnesting partitions and noncrossing
parking functions, and generalize the Main Conjecture to this context.
\end{problem}

In the `Fuss case' $p = mh + 1$, the third author defined Fuss analogs 
$\Park^{NC}_W(m)$ and $\Park^{alg}_W(m)$ of $\Park^{NC}_W$ and $\Park^{alg}_W$,
as well as a Fuss 
analog of the nonnesting parking space when $W$ is crystallographic \cite{Rhoades}.
Fuss analogs of the strong, intermediate, and weak versions of the Main Conjecture 
are presented and are proven in nearly the generality of the $m = 1$ case.
On the other hand, when $\gcd(p,h)=1$ but $p \not\equiv 1 \bmod{h}$, one
encounters the complication that the hsop guaranteed by the rational Cherednik algebra theory need not carry the representation $V^*$, but rather 
some Galois conjugate of this representation.

\subsection{The near boundary cases of Kirkman numbers}

As mentioned above, the product formulas \eqref{GNS-boundary-case-formulas} 
for \eqref{GNS-generating-function} when $\chi$ is either
the character of the trivial and determinant
representations $\wedge^0 V, \wedge^n V$ of $W$ yield
the product formulas \eqref{det-and-triv-q-Kirkman}
for the boundary cases $k=0,n$ of the $q$-Kirkman numbers $\Kirk(W,k;q)$,
and even allowing the general parameter $p$, not just $p=h+1$.

We discuss here a conjectural\footnote{While this article was under review, this conjecture (Conjecture~\ref{near-boundary-Kirkman-conjecture} or 
\ref{near-boundary-Kirkman-conjecture}' below) was resolved affirmatively in joint work of A. Shepler and the second author (in preparation).} formula for 
\eqref{GNS-generating-function} in the {\it near-boundary}
cases where $\chi$ comes either from $\wedge^1 V=V$ or 
$\wedge^{n-1}V \cong \det \otimes V$, along
with its consequences for the ($q$-)Kirkman numbers $\Kirk(W,k;q)$, 
even with the general parameter $p$.
For the sake of stating the conjecture, define
a different $q$-analogue of the rank $n$ by
$$
n_q:=\sum_{i=1}^n q^{e_i-1}.
$$ 
In other words, $n_q$ is 
the generating function for the {\it codegrees} of $W$.

\begin{conjecture}
\label{near-boundary-Kirkman-conjecture}
For a real reflection group $W$ acting irreducibly on $V=\CC^n$, with
exponents $(e_1,\ldots,e_n)$ indexed so that the Coxeter number $h=e_n+1$,
one has the following two equivalent\footnote{The equivalence of the 
two formulas comes from the observation (see \cite[eqn. (1.24)]{GNS})
that $\tilde{\tau}(\det \cdot \chi;q,u)=u^n \, \tilde{\tau}(\chi;q,u^{-1})$,
combined with $V^* \cong V$ and 
$\wedge^{n-1} V \cong V^* \otimes \wedge^n V \cong \det \otimes V$.}
 formulas:
$$
\begin{aligned}
\tilde{\tau}(\chi_V;q,u)
&=n_q \cdot \frac{q+u}{1+q^{h-1}u} \cdot
   \prod_{i=1}^n \frac{1+q^{e_i}u}{1-q^{e_i+1}} \\
\tilde{\tau}(\chi_{\wedge^{n-1}V};q,u)
&= n_q \cdot \frac{1+qu}{u+q^{h-1}} \cdot
   \prod_{i=1}^n \frac{u+q^{e_i} }{ 1-q^{e_i+1} }.
\end{aligned}
$$
\end{conjecture}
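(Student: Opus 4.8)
The two displayed formulas are equivalent: applying the identity $\tilde{\tau}_W(\det\cdot\chi;q,u)=u^n\,\tilde{\tau}_W(\chi;q,u^{-1})$ recorded in the footnote, together with $V^*\cong V$ and $\wedge^{n-1}V\cong\det\otimes V$, turns the first formula into the second. So it suffices to prove the formula for $\tilde{\tau}_W(\chi_V;q,u)$. I would first reinterpret this quantity module-theoretically. Writing $\Omega^\bullet[V]=\CC[V]\otimes\wedge^\bullet V^*$ for the space of polynomial differential forms, bigraded by polynomial degree (marked by $q$) and form degree (marked by $u$), one has, using $V\cong V^*$,
\begin{equation*}
\tilde{\tau}_W(\chi_V;q,u)=\sum_{k,d}u^k q^d\,\dim\Hom_W\!\left(V,\,\CC[V]_d\otimes\wedge^k V^*\right)=\mathrm{Hilb}\,\mathcal{M},\qquad \mathcal{M}:=\Hom_W(V,\Omega^\bullet[V]).
\end{equation*}
Thus $\mathcal{M}$ is the module of $V$-isotypic polynomial differential forms; the $\triv$- and $\det$-isotypic analogues of $\mathcal{M}$ are exactly the objects governed by Solomon's theorem \cite{Solomon} and its top-degree twist, which is what produces the known product formulas \eqref{GNS-boundary-case-formulas}.

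\textbf{Reduction to the coinvariant algebra.} By Chevalley's theorem $\CC[V]$ is free over $R:=\CC[V]^W$, so each summand $\mathcal{M}^k=\Hom_W(V,\CC[V]\otimes\wedge^k V^*)$, being an isotypic direct summand of a free graded $R$-module, is graded projective, hence free, over $R$. Therefore
\begin{equation*}
\tilde{\tau}_W(\chi_V;q,u)=\frac{P_V(q,u)}{\prod_{i=1}^n(1-q^{d_i})},\qquad P_V(q,u)=\sum_{k,d}u^kq^d\,\dim\Hom_W\!\left(V,(\CC[V]_W)_d\otimes\wedge^k V^*\right),
\end{equation*}
where $\CC[V]_W:=\CC[V]/(\CC[V]^W_+)$ is the coinvariant algebra. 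The conjectured identity is then equivalent to $P_V(q,u)=n_q(q+u)\prod_{i=1}^{n-1}(1+q^{e_i}u)$ (the factor $1+q^{h-1}u$ in the denominator of the conjecture cancels the $i=n$ term $1+q^{e_n}u$ in the numerator product, since the top exponent $e_n=h-1$ has multiplicity one). Two sanity checks come for free: setting $u=0$ gives $q\,n_q=\sum_i q^{e_i}$, the classical graded multiplicity of $V$ in $\CC[V]_W$, which is also the Hilbert series of the rank-$n$ free $R$-module $\mathcal{M}^{(1)}:=\Hom_W(V,\CC[V])$ generated by the gradients $\nabla f_1,\dots,\nabla f_n$ of the basic invariants in degrees $e_1,\dots,e_n$; and the coefficient of $u^n$, computed through the $\det$-duality above, matches the known graded multiplicity of $\wedge^{n-1}V$ in $\CC[V]_W$.

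\textbf{Pinning down $P_V$.} One promising uniform route is to exploit the algebraic de Rham complex $(\Omega^\bullet[V],d)$, which is $W$-equivariant and acyclic in positive degrees (polynomial Poincaré lemma); passing to $V$-covariants makes $(\mathcal{M},d)$ a complex with vanishing cohomology, since $\Hom_W(V,\CC)=0$. Because $d$ sends a form of form-degree $k$ and polynomial degree $j$ to one of form-degree $k+1$ and polynomial degree $j-1$, it preserves the total grading $k+j$, so the Euler-characteristic relation reads $\tilde{\tau}_W(\chi_V;q,u)\big|_{u=-q}=0$ --- which already accounts for the factor $(q+u)$. To obtain the remaining factor $n_q\prod_{i=1}^{n-1}(1+q^{e_i}u)$, I would bring in the structure of $\mathcal{M}$ as a module over the exterior algebra $(\Omega^\bullet[V])^W=R\langle df_1,\dots,df_n\rangle$ together with the contraction operators $\iota_{\theta_i}$ against the basic invariant derivations; Cartan's formula $L_{\theta_i}=d\iota_{\theta_i}+\iota_{\theta_i}d$ interlocks multiplication by the $df_i$, contraction, and $d$, and should let one write down a small free $R$-resolution of $\mathcal{M}$ starting from $\mathcal{M}^{(1)}$, whose bigraded Euler characteristic is the claimed $P_V(q,u)$.

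\textbf{Main obstacle and fallback.} The hard part is precisely this last step: there is no off-the-shelf "Solomon's theorem for $V$-covariant forms," and the appearance of the codegree generating function $n_q$ --- a sum, not a product --- signals that $\mathcal{M}$ is genuinely less rigid than the $\triv$- and $\det$-isotypic modules, so the syzygies among the elements $\nabla f_i\wedge df_{j_1}\wedge\cdots\wedge df_{j_r}$ must be controlled carefully. If a clean uniform resolution proves elusive, the conjecture can instead be settled type-by-type: in the classical types, substitute the irreducible character of $\wedge^1 V$ --- namely $\chi^{(n-1,1)}$ in type $A$, $\chi^{((n-1),(1))}$ in type $B/C$, $\chi^{\{(n-1),(1)\}}$ in type $D$ --- into the Gyoja--Nishiyama--Shimura product formulas for $\tilde{\tau}_W(\chi;q,u)$ from \cite{GNS} and simplify the resulting $q$-binomial expression (dividing out $[n+1]_q$ in type $A$, exactly as in the proof of Proposition~\ref{q-Kirkman-formulas}); for the dihedral groups $I_2(m)$ use $V=\Ind_C^W\chi_\omega$ and Frobenius reciprocity as in the derivation of \eqref{dihedral-q-Kirkman}; and then finish $H_3,H_4,F_4,E_6,E_7,E_8$ by summing $\frac{1}{|W|}\sum_w\chi_V(w)\det(1+uw)/\det(1-qw)$ over conjugacy classes from the character tables --- routine by computer, but not yet carried out for the exceptional groups, which is why the statement is still a conjecture. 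Either way, substituting $u=-q^{h+1}$ (or $u=-q^p$) into the resulting formula yields, via \eqref{q-Kirkman-definition} and \eqref{det-and-triv-q-Kirkman}, closed product forms for the near-boundary $q$-Kirkman numbers $\Kirk(W,1;q)$ and $\Kirk(W,n-1;q)$.
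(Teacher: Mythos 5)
There is no proof in the paper for you to match: Conjecture~\ref{near-boundary-Kirkman-conjecture} is stated in the Open Problems section precisely as an open conjecture, and the paper's only contribution toward it is the reformulation you rediscover --- namely that, since $M=\Hom_W(V^*,S\otimes\wedge)\cong (S\otimes\wedge\otimes V)^W$ is a free module over $S^W$, the statement is equivalent to the product formula $\Hilb(M/S^W_+M;q,u)=n_q\cdot(q+u)\cdot\prod_{i=1}^{n-1}(1+q^{e_i}u)$ (the paper's Conjecture~\ref{near-boundary-Kirkman-conjecture}${}^\prime$). Your reduction steps are correct as far as they go: the equivalence of the two displayed formulas via $\tilde{\tau}(\det\cdot\chi;q,u)=u^n\tilde{\tau}(\chi;q,u^{-1})$, the identification of $\tilde{\tau}(\chi_V;q,u)$ with the bigraded Hilbert series of the $V$-isotypic polynomial differential forms, the freeness over the invariant ring, the cancellation of $1+q^{e_n}u$ against $1+q^{h-1}u$, and the specializations at $u=0$ and in the $u^n$ coefficient are all sound, and your de Rham Euler-characteristic observation does genuinely establish the vanishing at $u=-q$, i.e.\ the factor $(q+u)$ --- a partial check the paper does not record.

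The gap is that the actual content of the conjecture, the factor $n_q\prod_{i=1}^{n-1}(1+q^{e_i}u)$, is never established. Your uniform route stops exactly at the decisive point: the claim that Cartan's formula and the module structure over $R\langle df_1,\ldots,df_n\rangle$ ``should let one write down a small free $R$-resolution of $\mathcal{M}$'' is an aspiration, not an argument --- you give no resolution, no control of the syzygies among the elements $\nabla f_i\wedge df_{j_1}\wedge\cdots\wedge df_{j_r}$, and as you yourself note there is no Solomon-type theorem for the $V$-isotypic component (the appearance of the non-factoring sum $n_q$ is precisely the obstruction). The type-by-type fallback is likewise only described, not executed: no simplification of the Gyoja--Nishiyama--Shimura expressions for $\chi^{(n-1,1)}$, $\chi^{((n-1),(1))}$, $\chi^{\{(n-1),(1)\}}$ is carried out, the dihedral case is not computed, and the exceptional cases are explicitly deferred. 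So the proposal amounts to a restatement of the conjecture in the paper's own equivalent form, plus one correct consistency check; it does not constitute a proof, and indeed could not be expected to without new input, since the statement remains open in the paper itself.
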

\noindent
Setting $u=-q^p$ for a positive integer $p$ would imply that the graded
character 
$\chi(w;q)=\frac{ \det(1-q^{p}w) }{  \det(1 - qw)    }$
has its graded multiplicities of
the irreducibles $V$ and $\wedge^{n-1}V$ given by, respectively,
\begin{equation}
\label{near-boundary-p-Fuss-graded-chars}
\begin{aligned}
q^1 \cdot & n_q \cdot \frac{ [p-1]_q}{ [p+h-1]_q } \cdot \Cat^{(p)}(W;q)\\
q^{|\Phi^+|-h+1} \cdot & n_q \cdot \frac{ [p+1]_q}{ [p-h+1]_q } \cdot \prod_{i=1}^{n}\frac{ [p-e_i]_q }{ [e_i+1]_q }.
\end{aligned}
\end{equation}
This would have two interesting consequences. 

Firstly, setting $p=h+1$ in \eqref{near-boundary-p-Fuss-graded-chars} would give
\begin{equation}
\label{near-boundary-q-Kirkmans}
\begin{aligned}
\Kirk(W,1,q) &=q^1 \cdot n_q \cdot \frac{ [h]_q}{ [2h]_q } \cdot \Cat(W;q) \\
\Kirk(W,n-1,q) &= q^{|\Phi^+|-h+1} \cdot n_q \cdot \frac{ [h+2]_q}{ [2]_q } 
\end{aligned}
\end{equation}
where the second equation used the fact that $h-e_i=e_{n-i}$.  

Secondly, equation \eqref{near-boundary-p-Fuss-graded-chars}
at $q=1$ says that the ratio of the multiplicities of the
reflection and trivial characters in
the character $\chi(w)=p^{\dim V^w}$ is 
$
\frac{n(p-1)}{p+h-1}.
$
This turns out to be equivalent, by taking $\frac{d}{dt}$ in
\eqref{eq:hpoly}, to the following curious statement:
a $W$-orbit $x$ in $W \backslash Q/pQ$ chosen uniformly at
random has the expected value for the parabolic corank
of the $W$-stabilizer subgroup $W_x$ equal to
$
\frac{n(p-1)}{p+h-1},
$
or equivalently, its expected rank is 
$
\frac{nh}{p+h-1}.
$

Lastly, we mention the algebraic interpretation of 
Conjecture~\ref{near-boundary-Kirkman-conjecture}.
Denote the symmetric and exterior algebras of $V^*$ by
$S=\CC[V]=\Sym(V^*)$ and $\wedge=\wedge V^*$.
Their tensor product $S \otimes V$
becomes a $W$-representation, bigraded by polynomial and exterior degree.
Its $W$-intertwiner space with $V^*$ 
$$
M:=\Hom_W( V^*, S \otimes \wedge ) \cong (S \otimes \wedge \otimes V)^W
$$
will have bigraded Hilbert series given by the left side of the conjecture,
using the variables, $q,u$ for the polynomial, exterior gradings, respectively.

Furthermore, $S \otimes \wedge$ becomes a bigraded module over
the invariant subalgebra $S^W$, via multiplication in the left tensor factor $S$.
Since $S^W=\CC[f_1,\ldots,f_n]$ is a polynomial algebra, $S$ will be a 
free $S^W$-module.  Hence $S \otimes \wedge$ is also $S^W$-free,
and the same holds for $M$.
Conjecture~\ref{near-boundary-Kirkman-conjecture} 
then predicts the bidegrees for 
any choice of bihomogeneous $S^W$-basis elements of $M$, or equivalently, 
the Hilbert series of the quotient $M/S^W_+ M$.  
Using the fact that $1/\Hilb(S^W,q) = \prod_{i=1}^{\ell} (1-q^{e_i+1})$,
Conjecture~\ref{near-boundary-Kirkman-conjecture} 
becomes equivalent to the following.

\vskip.1in
\noindent
{\bf Conjecture~\ref{near-boundary-Kirkman-conjecture}${}^\prime$.}
$$
\Hilb(\,\, M/S^W_+M \,\, ; q,u) \,\, = \,\, 
n_q \cdot (q+u) \cdot \prod_{i=1}^{\ell-1} (1+q^{e_i}u).
$$

\section{Appendix:  Etingof's Proof of Reducedness for a Certain hsop}

Let $W$ be an irreducible real reflection group of rank $n$, with 
(complexified) reflection representation $V$, Coxeter number $h$,
and let $p$ be any positive integer coprime to $h$.
We present here a uniform argument due to P. Etingof \cite{EtingofComm} 
showing the existence of a hsop of degree $p$ carrying $V^*$
which satisfies the reducedness condition of the strong or intermediate
versions of the Main Conjecture from \S\ref{main-conjecture-subsection}.

\begin{theorem} \label{Etingof-reduced} (Etingof)
There exists an hsop $\Theta = (\theta_1, \dots, \theta_n)$ of degree $p$ carrying $V^*$
such that the subvariety $V^{\Theta} \subseteq V$ consists of $p^n$ distinct points.
\end{theorem}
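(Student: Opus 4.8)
The plan is to produce $\Theta$ from the representation theory of the rational Cherednik algebra of $W$, and then to show the scheme $V^{\Theta}$ is reduced by evaluating the Jacobian of $\Theta$ at each of its points; the heart is a short computation at points in general position, extended to the deeper strata by a transverse fixed-point equation. For Step~1, I would work in the rational Cherednik algebra $H_c(W)$ at the parameter $c=p/h$, acting on $\CC[V]$ with $x\in V^*$ by multiplication and $y\in V$ by Dunkl operators $T_1,\dots,T_n$. Because $\gcd(p,h)=1$, the theory of singular polynomials together with the finite-dimensional representation theory of Berest--Etingof--Ginzburg \cite{BEG}, Gordon \cite{Gordon}, and Etingof \cite{Etingof}, \cite{EtingofMa} shows that the degree-$p$ polynomials annihilated by every $T_k$ form exactly one copy of the $W$-module $V^*$; taking $\theta_1,\dots,\theta_n$ a basis of this space identified $W$-equivariantly with $x_1,\dots,x_n$ gives an hsop of degree $p$ carrying $V^*$, with $\CC[V]/(\Theta)=L_c(\triv_W)$ of dimension $p^n$. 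The property I would use throughout is $T_k\theta_i=0$ for all $i,k$.

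For Step~2, note that the commutative-algebra argument behind Proposition~\ref{deformation-isomorphism} uses only $\deg\Theta>\deg\xx$, so it applies verbatim with $p$ in place of $h+1$ and gives $\dim_\CC\CC[V]/(\Theta-\xx)=p^n$. Hence $V^{\Theta}$ has at most $p^n$ points, with exactly $p^n$ distinct ones precisely when $(\Theta-\xx)$ is radical, equivalently when the Jacobian $J_\Theta=[\,\partial\theta_i/\partial x_j\,]$ satisfies $\det\!\bigl(J_\Theta(v)-I\bigr)\neq 0$ at every $v\in V^{\Theta}$. (Assume $p\geq 2$; the case $p=1$ is trivial.) Fix such a $v$, let $W_v$ be its pointwise stabilizer and $X=V^{W_v}$, so that $v$ lies on exactly the reflecting hyperplanes containing $X$. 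Since $\Theta\colon V\to V$ is $W$-equivariant one has $\Theta(X)\subseteq X$, so $J_\Theta(v)$ is block triangular with respect to $V=X\oplus X^\perp$, and its eigenvalues are those of the tangential block $J(\Theta|_X)(v)$ together with those of the transverse block $B:=\partial_{X^\perp}\Theta_{X^\perp}(v)$.

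Step~3 is the key computation. Evaluating $T_k\theta_i=0$ at $v$ for $e_i,e_k$ tangent to $X$, and using $\Theta(v)=v$ together with $\Theta(sv)=s\,\Theta(v)=sv$, the hyperplanes through $X$ drop out (their normals are orthogonal to $X$) and one is left with $\partial_k\theta_i(v)=2c\sum_{s\in T}\langle e_i,\alpha_s\rangle\langle e_k,\alpha_s\rangle$ (roots taken to be unit vectors). The tensor $\sum_{s\in T}\alpha_s\otimes\alpha_s$ is $W$-invariant, hence the scalar $\tfrac{|T|}{n}I=\tfrac{h}{2}I$ since $|T|=|\Phi^+|=\tfrac{nh}{2}$; as $2c\cdot\tfrac{h}{2}=p$, the tangential block equals $p\,I_X$ --- as it must, by the Euler identity $J_\Theta(v)\,v=p\,v$. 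For the transverse block, the same relation with $e_i,e_k\in X^\perp$, now handling the $0/0$ contributions of the hyperplanes through $X$ by a limit that recovers $B$ itself, yields on each $W_v$-irreducible summand of $X^\perp$ with Coxeter number $h_v$ the linear equation $\bigl(1-\tfrac{p h_v}{h}\bigr)B=\tfrac{p(h-h_v)}{h}\,I$, whence $B=\frac{p(h-h_v)}{h-p h_v}\,I$, with $h-p h_v\neq 0$ because $\gcd(p,h)=1$ and $p\geq 2$. Thus the eigenvalues of $J_\Theta(v)-I$ are $p-1$ and the numbers $\frac{(p-1)h}{h-p h_v}$, all nonzero, so $\det\!\bigl(J_\Theta(v)-I\bigr)\neq 0$ and $V^{\Theta}$ is reduced.

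I expect the main obstacle to be making the transverse block at the deeper strata fully rigorous: justifying the limiting procedure that isolates the transverse linear part $B$, bookkeeping the decomposition of $W_v$ acting on $X^\perp$ into irreducible reflection subgroups so that each contributes its own $h_v$ and the block form of $B$ is as stated, and verifying carefully that $\gcd(p,h)=1$ is exactly what rules out the resonance $h=p h_v$ (if $h=ph_v$ then $p\mid h$). A second delicate point is Step~1 itself --- pinning down, via the Cherednik-algebraic machinery, that the maximal submodule of the standard module $\Delta_c(\triv_W)$ is generated in degree $p$ by precisely one copy of $V^*$, with quotient of dimension $p^n$; this is the deep input, and it is also where the coprimality hypothesis originates.
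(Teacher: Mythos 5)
Your proposal is correct and follows essentially the same route as the paper's appendix: the hsop is the Dunkl-annihilated degree-$p$ copy of $V^*$ supplied by the rational Cherednik theory, and reducedness is proved by showing the differential $d\Theta_v$ at each fixed point has eigenvalue $p$ on $V^{W_v}$ and the scalar $\frac{p(h-h_v)}{h-ph_v}\neq 1$ on each irreducible summand of the complement, exactly your tangential/transverse computation. The two delicate points you flag are handled in the paper by observing that $\widehat{\Theta}(\alpha_t^{\vee})$ is divisible by $\alpha_t^{\vee}$ (which evaluates the apparent $0/0$ terms at $v$ and yields the self-consistent linear equation for the transverse scalar) and by applying Schur's lemma to the $W_v$-equivariant operator $d\Theta_v$ on the decomposition of $V$ into the fixed space and the reflection representations of the irreducible factors of $W_v$.
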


The choice of hsop in Theorem~\ref{Etingof-reduced} arises from the theory of 
rational Cherednik algebras, and to explain it requires some notation.  
Let $\langle -, - \rangle$ be a $W$-invariant positive definite form on $V$, 
unique up to scaling.  For any $v \in V$, let $v^{\vee} \in V^*$ 
be the linear functional given by
$\langle -, v \rangle: V \rightarrow \CC$.
Also let $\partial_v : \CC[V] \rightarrow \CC[V]$ be the partial derivative operator 
in the direction of $v$.  
The {\sf Dunkl operator} $D_v : \CC[V] \rightarrow \CC[V]$ with parameter $\frac{p}{h}$
is given by
\begin{equation}
D_v := \partial_v - \frac{p}{h} \sum_{t \in T} \frac{\langle \alpha_t, v \rangle}{\alpha_t^{\vee}}(1-t),
\end{equation}
where $\{ \alpha_t \,:\, t \in T \}$ is the root system associated to $W$, normalized to satisfy
$\langle \alpha_t, \alpha_t \rangle = 2$ for all $t$ in $T$.  For any $t$ in $T$ 
and  $f \in \CC[V]$, one has $(1-t).f$ divisible by $\alpha_t^{\vee}$ in $\CC[V]$, so the 
operator $D_v$ maps polynomials to polynomials.

It follows from Gordon's work\footnote{Although there are subtleties to this story realized only later than \cite{Gordon}-- see Gordon and Griffeth \cite[\S 1.5, 2.11, 2.12]{GordonGriffeth} for how this is circumvented via results of Malle and of Rouquier.} on the rational Cherednik algebras \cite{Gordon} that there exists
a $W$-equivariant injective linear map $\widehat{\Theta}: V^* \hookrightarrow \CC[V]_p$ whose
image $\widehat{\Theta}(V^*)$
\begin{enumerate}
\item[(i)]
is annihilated by all Dunkl operators $\{ D_b \,:\, b \in V \}$, and
\item[(ii)]
generates an ideal $I$ in $\CC[V]$ whose quotient $\CC[V]/I$ carries
a $p^n$-dimensional simple module for the rational Cherednik algebra
at parameter $\frac{p}{h}$.
\end{enumerate}
In particular, since the quotient  $\CC[V]/I$ is finite-dimensional,
for any basis $x_1,\ldots,x_n$ of $V^*$, the elements $\theta_i=\widehat{\Theta}(x_i)$ for
$i=1,2,\ldots,n$ form an hsop  $\Theta=(\theta_1,\ldots,\theta_n)$ 
as in the setup \eqref{strong-hsop-setup} for the Main Conjecture.
One then has an associated degree $p$ polynomial map $\Theta: V \rightarrow V$ 
characterized by
$
a(\Theta(v)) = (\widehat{\Theta}(a))(v) 
$ 
for $v \in V, a \in V^*$, with fixed point locus 
$$
V^{\Theta}=\{x \in V \,:\, \Theta(x) = x \} 
=\{x \in V \,: a(x) = (\widehat{\Theta}(a))(x) \text{ for all }a \in V^* \}.
$$

\begin{proof}[Proof of Theorem~\ref{Etingof-reduced}]
To avoid trivialities, assume $p \neq 1$ without loss of generality.
Let $x \in V^{\Theta}$ and let $d \Theta_x : V \rightarrow V$ be the linear map given by the 
differential of the polynomial map $\Theta$ at $x$.  Since the generators $\theta_i-x_i$ for
the ideal $(\Theta-\xx)$ that cuts out $V^\Theta$ have Jacobian $d\Theta_x - 1_V$,
it is enough to show that $d \Theta_x$ does not have the eigenvalue $1$.  
To do this, we compute the map $d \Theta_x$ explicitly.
We will make frequent use of the 
fact that for $a, b, x \in V$ one has  
\begin{equation}
\label{partial-versus-d-fact}
\begin{aligned}
\langle a, d \Theta_x(b) \rangle 
= (\partial_b \widehat{\Theta}(a^{\vee})) (x) 
&=\frac{p}{h} \sum_{t \in T} \langle \alpha_t, b \rangle
\left( \frac{(1-t)\widehat{\Theta}(a^{\vee})}{\alpha_t^{\vee}} \right)(x)
\\
&=\frac{p}{h} \sum_{t \in T} \langle \alpha_t, b \rangle \langle \alpha_t, a \rangle
\left( \frac{\widehat{\Theta}(\alpha_t^{\vee})}{\alpha_t^{\vee}} \right)(x)
\end{aligned}
\end{equation}
where the second-to-last equality used the fact that $\widehat{\Theta}$ is annihilated by the
Dunkl operator $D_b$, and the last equality derives from $\widehat{\Theta}$ being 
linear and $W$-equivariant.

Given $x$ in $V^\Theta$, let $W_x \subseteq W$ be its isotropy subgroup,
which is itself a reflection group having reflection representation 
$V / V^{W_x}$.  Decompose 
$$
W_x = W_1 \times \dots \times W_m
$$
as a product of irreducible reflection groups.  
For $1 \leq i \leq m$, let $T_i \subseteq T$ be the reflections in $W_i$ and let
$T_0 = T - (T_1 \cup \dots \cup T_m)$, giving a disjoint union of sets
$$
T = T_0 \sqcup T_1 \sqcup \dots \sqcup T_m.
$$
One also has a $W_x$-stable orthogonal direct sum decomposition
$$
V = V_0 \oplus V_1 \oplus \dots \oplus V_m
$$ 
in which $V_0 = V^{W_x}$, and $V_i$ is the (complexified) 
irreducible reflection representation of $W_i$ for $1 \leq i \leq m$.  
Since $d \Theta_x$ is a $W_x$-invariant 
operator, it preserves these direct summands.  
In fact, we will show that this is an 
eigenspace decomposition:  since $W_i$ acts irreducibly on $V_i$ for $i=1,2,\ldots,m$,
by Schur's Lemma, the restriction $d \Theta_x|_{V_i}$ acts as a scalar $c_i$, 
(to be determined in Case 2 below), and we will show in Case 1 below that 
$d \Theta_x|_{V_0}$ acts as the scalar $p > 1$.  The following fact will be useful.

\begin{lemma}
\label{quotient-evaluation-fact}
For any $x$ in $V^\Theta$ and $t$ in $T$, one has
\begin{equation}
\left( \frac{\widehat{\Theta}(\alpha_t^{\vee})}{\alpha_t^{\vee}}\right)(x)
=\begin{cases}
1 & \text{ if } t \in T_0,\\
c_i &  \text{ if } t \in T_i \text{ with }i\in \{1,2,\ldots,m\}.
\end{cases}
\end{equation}
\end{lemma}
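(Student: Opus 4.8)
The plan is to verify the two cases of the displayed formula separately, using three ingredients already in hand: the defining property $a(x)=(\widehat{\Theta}(a))(x)$ for $x\in V^{\Theta}$, the relation \eqref{partial-versus-d-fact}, and the fact (obtained by Schur's Lemma in the surrounding discussion) that $d\Theta_x$ acts on $V_i$ by the scalar $c_i$. First I would record a preliminary normalization: since $\widehat{\Theta}$ is $W$-equivariant, $t\cdot\widehat{\Theta}(\alpha_t^{\vee})=\widehat{\Theta}(t\cdot\alpha_t^{\vee})=-\widehat{\Theta}(\alpha_t^{\vee})$, so $\widehat{\Theta}(\alpha_t^{\vee})=\frac{1}{2}(1-t)\widehat{\Theta}(\alpha_t^{\vee})$ is divisible by $\alpha_t^{\vee}$ and $g_t:=\widehat{\Theta}(\alpha_t^{\vee})/\alpha_t^{\vee}$ is a genuine polynomial of degree $p-1$. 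I would also note that $x$, being fixed by $W_x=W_1\times\cdots\times W_m$, lies in $V_0=V^{W_x}$, which is orthogonal to each $V_i$ with $i\ge 1$.

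For $t\in T_0$ the argument is immediate: here $t\notin W_x$, so $t(x)\ne x$, forcing $\alpha_t^{\vee}(x)=\langle x,\alpha_t\rangle\ne 0$, i.e. $x$ avoids the reflecting hyperplane of $t$. Then $g_t(x)=\widehat{\Theta}(\alpha_t^{\vee})(x)/\alpha_t^{\vee}(x)=1$, where I rewrite the numerator using the defining relation of $V^{\Theta}$ applied to $a=\alpha_t^{\vee}$.

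For $t\in T_i$ with $i\ge 1$ the naive division is unavailable: $\alpha_t\in V_i$ is orthogonal to $V_0\ni x$, so both $\widehat{\Theta}(\alpha_t^{\vee})$ and $\alpha_t^{\vee}$ vanish at $x$. The idea is to recover the value of the ratio $g_t$ at the hyperplane point $x$ by differentiating transversally. Applying $\partial_{\alpha_t}$ to $\widehat{\Theta}(\alpha_t^{\vee})=\alpha_t^{\vee}\,g_t$ and using $\partial_{\alpha_t}\alpha_t^{\vee}=\langle\alpha_t,\alpha_t\rangle=2$ gives $\partial_{\alpha_t}\widehat{\Theta}(\alpha_t^{\vee})=2g_t+\alpha_t^{\vee}\,\partial_{\alpha_t}g_t$, which at $x$ (where $\alpha_t^{\vee}(x)=0$) equals $2g_t(x)$. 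On the other hand, the first equality in \eqref{partial-versus-d-fact}, taken with $a=b=\alpha_t$, reads $(\partial_{\alpha_t}\widehat{\Theta}(\alpha_t^{\vee}))(x)=\langle\alpha_t,d\Theta_x(\alpha_t)\rangle=c_i\langle\alpha_t,\alpha_t\rangle=2c_i$, since $\alpha_t\in V_i$ and $d\Theta_x|_{V_i}=c_i\cdot\mathrm{id}$. Comparing the two evaluations yields $g_t(x)=c_i$.

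The hard part will be exactly this $T_i$ case: one must resist applying the $V^{\Theta}$-relation directly to a ratio whose numerator and denominator both vanish at $x$, and instead observe that $\partial_{\alpha_t}$ of the Dunkl-annihilation identity (already packaged as \eqref{partial-versus-d-fact}) is precisely what converts the transverse derivative of $g_t$ at $x$ into the scalar $c_i$. Everything else is bookkeeping: matching the partition $T=T_0\sqcup T_1\sqcup\cdots\sqcup T_m$ to the two cases, and the orthogonality $V_0\perp V_i$.
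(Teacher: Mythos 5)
Your proposal is correct and follows essentially the same route as the paper's proof: the $T_0$ case via the defining relation $\widehat{\Theta}(\alpha_t^{\vee})(x)=\alpha_t^{\vee}(x)$ with $\alpha_t^{\vee}(x)\neq 0$, and the $T_i$ case by writing $\widehat{\Theta}(\alpha_t^{\vee})=\alpha_t^{\vee}\cdot g_t$, applying $\partial_{\alpha_t}$, evaluating at $x$ where $\alpha_t^{\vee}(x)=0$, and identifying $(\partial_{\alpha_t}\widehat{\Theta}(\alpha_t^{\vee}))(x)=\langle \alpha_t, d\Theta_x(\alpha_t)\rangle=2c_i$ via \eqref{partial-versus-d-fact} and Schur's Lemma. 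Your preliminary equivariance argument for divisibility of $\widehat{\Theta}(\alpha_t^{\vee})$ by $\alpha_t^{\vee}$ is a small addition the paper leaves implicit, but otherwise the two arguments coincide.
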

\begin{proof}
Since $x$ lies in $V^\Theta$, one has 
$
\widehat{\Theta}(\alpha_t^{\vee})(x) = \alpha_t^{\vee}(\Theta(x))=\alpha_t^{\vee}(x).
$
Thus the first case in the lemma follows, as $\alpha_t^{\vee}(x) \neq 0$ if $t$ lies in $T_0$, 
since such a $t$ does not lie in $W_x$.  To derive the second case, write
$\widehat{\Theta}(\alpha_t^{\vee})=\alpha_t^\vee \cdot f$ with $f$ in $\CC[V]$.  Then one has
$$
\begin{aligned}
\partial_{\alpha_t} \widehat{\Theta}(\alpha_t^{\vee})
&=\partial_{\alpha_t} ( \alpha_t^\vee \cdot f) \\
&=\partial_{\alpha_t}(\alpha_t^{\vee}) \cdot f 
+ \alpha_t^{\vee} \cdot \partial_{\alpha_t}(f) \\
&= 2f + \alpha_t^{\vee} \cdot \partial_{\alpha_t}(f).
\end{aligned} 
$$
Hence 
$
\partial_{\alpha_t} \widehat{\Theta}(\alpha_t^{\vee}(x))=2f(x),
$
as $\alpha_t^{\vee}(x)=0$ when $t$ lies in $T_i$ for $i \geq 1$, and therefore 
\begin{equation}
\label{c-as-partial}
\begin{aligned}
c_i &=  \frac{1}{2} \langle \alpha_t, c_i \alpha_t \rangle 
=\frac{1}{2} \langle \alpha_t, d\Theta_x (\alpha_t) \rangle 
=\frac{1}{2} \partial_{\alpha_t} \widehat{\Theta}(\alpha_t^{\vee})(x) \\
&=\frac{1}{2} (2f(x)) 
=f(x)
= \frac{\widehat{\Theta}(\alpha_t^{\vee})}{\alpha_t^{\vee}}(x).
\end{aligned}
\end{equation}
\end{proof}

We can now determine the operators $d \Theta_x|_{V_i}$ for $i=0,1,2,\ldots,m$.
\vskip.1in
\noindent
{\sf Case 1.} $i = 0$.  We claim that for any $a, b \in V_0$, one has
$
\langle a, d \Theta_x(b) \rangle
=p \cdot \langle a, b \rangle
$
and hence that $d \Theta_x|_{V_0}$ acts as the scalar $p>1$.
To see this, start with \eqref{partial-versus-d-fact}:
\begin{equation}
\label{case-one-expression}
\begin{aligned}
\langle a, d \Theta_x(b) \rangle 
&=\frac{p}{h} \sum_{t \in T} \langle \alpha_t, b \rangle \langle \alpha_t, a \rangle
\left( \frac{\widehat{\Theta}(\alpha_t^{\vee})}{\alpha_t^{\vee}} \right)(x) \\
&=\frac{p}{h} \sum_{t \in T_0} \langle \alpha_t, b \rangle \langle \alpha_t, a \rangle
\left( \frac{\widehat{\Theta}(\alpha_t^{\vee})}{\alpha_t^{\vee}} \right)(x)\\
&=\frac{p}{h} \sum_{t \in T_0} \langle \alpha_t, b \rangle \langle \alpha_t, a \rangle
=\frac{p}{h} \sum_{t \in T} \langle \alpha_t, b \rangle \langle \alpha_t, a \rangle.
\end{aligned}
\end{equation}
Here the second and fourth equality used the fact that any
$t$ in $T \setminus T_0=\cup_{i=1}^m T_i$ fixes $a$ in $V_0$, and hence
has $\langle \alpha_t, a \rangle = 0$, while the third equality used 
Lemma~\ref{quotient-evaluation-fact}.
The claim then follows from this lemma.

\begin{lemma}
\label{Etingof's-sum-lemma}
For $W$ an irreducible real reflection group with Coxeter number $h$ and reflections $T$
acting on $V$,  any $a, b \in V$ will satisfy
\begin{equation}
\label{reflection-sum}
\sum_{t \in T} \langle a, \alpha_t \rangle \langle b, \alpha_t \rangle = h \langle a, b \rangle.
\end{equation}
\end{lemma}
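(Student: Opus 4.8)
The plan is to recognize the left-hand side of \eqref{reflection-sum} as a $W$-invariant symmetric bilinear form on $V$, use irreducibility to conclude it is a scalar multiple of $\langle\cdot,\cdot\rangle$, and then compute the scalar by taking a trace.

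First I would set $B(a,b):=\sum_{t\in T}\langle a,\alpha_t\rangle\langle b,\alpha_t\rangle$, which is manifestly symmetric and bilinear. It is $W$-invariant: for $w\in W$, since $W$ acts orthogonally one has $B(wa,wb)=\sum_{t\in T}\langle a,w^{-1}\alpha_t\rangle\langle b,w^{-1}\alpha_t\rangle$, and as $t$ ranges over $T$ so does $w^{-1}tw$, with $\alpha_{w^{-1}tw}=\pm w^{-1}\alpha_t$; because each summand is even in $\alpha_t$, reindexing the sum gives $B(wa,wb)=B(a,b)$. Next, let $A\colon V\to V$ be the operator, self-adjoint with respect to $\langle\cdot,\cdot\rangle$, defined by $B(a,b)=\langle Aa,b\rangle$. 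Invariance of $B$ and of $\langle\cdot,\cdot\rangle$ forces $A$ to commute with the $W$-action; being self-adjoint on the real space $V$, it is diagonalizable over $\RR$ with $W$-stable eigenspaces, and irreducibility of $V$ as a real representation forces $A=\lambda\cdot\mathrm{id}$ for a single real scalar $\lambda$. (Alternatively: $\sum_{t\in T}t$ acts on $V$ by $v\mapsto |T|\,v-Av$ and is central in $\CC W$, hence acts as a scalar, so $A$ does too.) Thus $B=\lambda\,\langle\cdot,\cdot\rangle$.

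Finally I would determine $\lambda$ by computing the trace of $A$ relative to $\langle\cdot,\cdot\rangle$: choosing a (real) orthonormal basis $e_1,\dots,e_n$ of $V$ and using the normalization $\langle\alpha_t,\alpha_t\rangle=2$,
$$
\lambda n=\sum_{i=1}^n B(e_i,e_i)=\sum_{t\in T}\sum_{i=1}^n\langle e_i,\alpha_t\rangle^2=\sum_{t\in T}\langle\alpha_t,\alpha_t\rangle=2|T|.
$$
It then remains only to invoke the classical count $|T|=|\Phi^+|=\tfrac{nh}{2}$ for an irreducible finite real reflection group (Humphreys \cite[\S3.18]{Humphreys}; equivalently, $|\Phi^+|=\sum_{i}e_i$ together with the exponent symmetry $e_i+e_{n+1-i}=h$). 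This yields $\lambda=2|T|/n=h$, which is exactly \eqref{reflection-sum}.

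The argument is essentially routine linear algebra plus representation-theoretic bookkeeping; the only genuinely external ingredient, and hence the main point of the proof, is the identity $|T|=\tfrac{nh}{2}$. The one place requiring a little care is the reduction $B=\lambda\langle\cdot,\cdot\rangle$, which uses that $V$ is irreducible as a \emph{real} representation of $W$ (so that the self-adjoint intertwiner $A$ has no proper invariant subspaces); this holds by hypothesis since $W$ is a real reflection group acting irreducibly on $V$.
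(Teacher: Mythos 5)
Your proposal is correct and follows essentially the same route as the paper: view the left side as a $W$-invariant bilinear form, use irreducibility to identify it with $\lambda\langle\cdot,\cdot\rangle$, and compute $\lambda$ by summing over an orthonormal basis, invoking $2|T|=nh$. The extra care you take with the intertwiner argument and the sign ambiguity $\alpha_{w^{-1}tw}=\pm w^{-1}\alpha_t$ merely fills in details the paper leaves implicit.
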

\begin{proof}
The left side is a $W$-invariant bilinear function of 
$a,b$, and hence equal to some scalar multiple $\lambda$ of $\langle a, b \rangle$.
Letting $a = b = e_i$ for any orthonormal basis $e_1, \dots, e_n$ of $V$ and 
summing over $i$, one has 
$
\sum_{t \in T} \langle \alpha_t, \alpha_t \rangle = \lambda n,
$
so that $2 |T| = \lambda n$.  However, it is well-known \cite[\S3.18]{Humphreys} 
that $2 |T| = hn$, so $\lambda = h$.
\end{proof}

\vskip.1in
\noindent
{\sf Case 2.} $i$ lies in $\{1,2,\ldots,m\}$.
We will compute the scalar $c_i$ by which  $d \Theta_x|_{V_i}$ acts.
Assume $a,b$ lie in $V_i$, and start with \eqref{partial-versus-d-fact}:
\begin{equation}
\label{case-two-identity}
\begin{aligned}
\langle a, d \Theta_x(b) \rangle 
&= 
\frac{p}{h} \sum_{t \in T}
\langle \alpha_t, a \rangle \langle \alpha_t, b \rangle 
\frac{\widehat{\Theta}(\alpha_t^{\vee})}{\alpha_t^{\vee}}(x)\\
&= 
\frac{p}{h} 
\left( 
\sum_{t \in T}
\langle \alpha_t, a \rangle \langle \alpha_t, b \rangle 
+
\sum_{t \in T}
\langle \alpha_t, a \rangle \langle \alpha_t, b \rangle 
\left( \frac{\widehat{\Theta}(\alpha_t^{\vee})}{\alpha_t^{\vee}}(x)-1 \right)
\right)
\\
&= 
\frac{p}{h} 
\left( h \langle a,b \rangle +
(c_i-1) \sum_{t \in T_i}
\langle \alpha_t, a \rangle \langle \alpha_t, b \rangle 
\right)\\
&= 
\frac{p}{h} 
\left( h \langle a,b \rangle +
(c_i-1) h_i \langle a , b \rangle 
\right)
\end{aligned}
\end{equation}
where $h_i$ is the Coxeter number for $W_i$.
Here the third equality comes from applying Lemma~\ref{Etingof's-sum-lemma} to $W$
in the first sum, and  using  Lemma~\ref{quotient-evaluation-fact} in the second sum,
along with the fact that $\langle \alpha_t,a \rangle=0$ if $t \not\in T_0 \cup T_i$.
The fourth equality applies Lemma~\ref{Etingof's-sum-lemma} to $W_i$ in the second sum.

Now given $t \in T_i$, specializing \eqref{case-two-identity} at $a = b = \alpha_t$ gives
$$
\begin{aligned}
2 c_i =\langle \alpha_t , c_i \alpha_t \rangle  
      = \langle \alpha_t ,d \Theta_x(\alpha_t) \rangle 
     &=p \cdot \langle \alpha_t, \alpha_t\rangle + 
         \frac{p}{h} (c_i - 1) h_i \cdot \langle \alpha_t,\alpha_t \rangle\\
&= 2p + 2\frac{p}{h} (c_i - 1) h_i
\end{aligned}
$$
yielding $c_i = \frac{ph - ph_i}{h - p h_i}$.  Therefore $c_i \neq 1$, since $p,h > 1$.  
\end{proof}

\section*{Acknowledgments}
The authors are grateful to Alex Miller, Soichi Okada, 
Alex Postnikov, Anne Shepler, Eric Sommers, 
Christian Stump, and an anonymous referee
for helpful suggestions, corrections, conversations, references
and computations, and to Pavel Etingof for his permission to
include Theorem~\ref{Etingof-reduced} and its proof here.

\end{document}